\DeclareFontFamily{U}{tipa}{}
\DeclareFontShape{U}{tipa}{m}{n}{<->tipa10}{}
\newcommand{\arc@char}{{\usefont{U}{tipa}{m}{n}\symbol{62}}}%
\newcommand{\arc}[1]{\mathpalette\arc@arc{#1}}
\newcommand{\arc@arc}[2]{%
  \sbox0{$\m@th#1#2$}%
  \vbox{
    \hbox{\resizebox{\wd0}{\height}{\arc@char}}
    \nointerlineskip
    \box0
  }%
}
\newcommand\reallywidehat[1]{%
\savestack{\tmpbox}{\stretchto{%
  \scaleto{%
    \scalerel*[\widthof{\ensuremath{#1}}]{\kern-.6pt\bigwedge\kern-.6pt}%
    {\rule[-\textheight/2]{1ex}{\textheight}}
  }{\textheight}%
}{0.5ex}}%
\stackon[1pt]{#1}{\tmpbox}%
}
\DeclareSymbolFont{bbold}{U}{bbold}{m}{n}
\DeclareSymbolFontAlphabet{\mathbbold}{bbold}
\theoremstyle{definition}
\newtheorem{theorem}{Theorem}[section]
\newtheorem{thm}[theorem]{Theorem}
\newtheorem{prop}[theorem]{Proposition}
\newtheorem{defn}[theorem]{Definition}
\newtheorem{lemma}[theorem]{Lemma}
\newtheorem{prop-def}{Proposition-Definition}[section]
\newtheorem{rema}[theorem]{Remark}
\newtheorem{exam}[theorem]{Example}
\newtheorem{nota}[theorem]{Notation}
\newcommand{\mbb}[1]{\mathbb{#1}}
\newcommand{\Ext}{\textrm{Ext}}
\newcommand{\N}{\mbb{N}}
\newcommand{\Z}{\mbb{Z}}
\newcommand{\C}{\mbb{C}}
\newcommand{\vac}{\mathbbold{1}}
\newcommand{\pd}{\partial}
\newcommand{\eval}[2][v]{\lrgl{#1',#2#1}}
\newcommand{\db}{\mathbf{d}}
\newcommand{\Db}{\mathcal{D}}
\newcommand{\lrgl}[1]{\langle#1\rangle}
\newcommand{\im}{\textrm{im}}
\newcommand{\sseq}{\subseteq}
\newcommand{\iv}{^{-1}}
\newcommand{\Hom}{\textrm{Hom}}
\newcommand{\End}{\textrm{End}}
\newcommand{\Der}{\textrm{Der}}
\newcommand{\one}{\mathbf{1}}
\renewcommand{\d}{\mathbf{d}}
\newcommand{\Res}{\text{Res}}
\newcommand{\Sym}{\text{Sym}}
\begin{document}

\setlength{\oddsidemargin}{0cm} \setlength{\evensidemargin}{0cm}
\baselineskip=18pt

\title[Cohomology of supercomm algebras and vertex superalgebras]{A cohomology theory of supercommutative algebras and grading-restricted vertex superalgebras}
\author{Paul Johnson, Fei Qi}

\begin{abstract}
This paper constructs the cohomology theory for grading-restricted vertex superalgebras, generalizing Yi-Zhi Huang's cohomology theory of grading-restricted vertex algebras. To simplify the discussion, motivate the construction, and make it easier for the reader to understand the technical points, we also include the construction of the cohomology theory of supercommutative associative algebras, a generalization of the Harrison cohomology theory of a commutative algebra that has not been explicitly written down. The paper will serve as the foundation for many subsequent studies, especially the deformation theory of vertex superalgebras. 
\end{abstract}

\maketitle

\begin{flushright}
\textit{Dedicated to James Lepowsky\ \ \ \ \ \ \ \ \ \ \\
on the occasion of his 80th birthday}
\end{flushright}

\section{Introduction}
A vertex superalgebra is an algebraic structure formed by vertex operators satisfying supercommutativity and associativity. It plays a fundamental role in mathematical physics, especially in the mathematical construction of two-dimensional superconformal field theories. So far, most existing results require the module category of the vertex superalgebra to be semisimple. The results for vertex superalgebras with nonsemisimple module categories are limited and often rely heavily on particular properties of the corresponding examples. 

In the study of nonsemisimple associative algebras, the cohomology method has played a crucial role. In representation theory, the cohomology characterizes the nontrivial extensions of two modules. In deformation theory, the second cohomology describes square-zero deformations and first-order deformations, while the third cohomology describes obstructions to integrating a first-order deformation. Since vertex operators in vertex superalgebras satisfy associativity, we naturally wish to develop a cohomology theory for vertex superalgebras that is analogous to that for an associative algebra, and establish a general method for studying vertex superalgebras with nonsemisimple module categories. 

A lot of work has been done on vertex algebras, which are $\mathbb{Z}$-graded with vertex operators satisfying commutativity. In \cite{H-Coh}, Yi-Zhi Huang constructed a cohomology theory for a grading-restricted vertex algebra $V$ and its module $W$, where the vertex operators satisfy both commutativity and associativity. The key idea is to use the space $\widetilde{W}_{z_1...z_n}$ of $\overline{W}$-valued rational functions, the analytic continuations of series with coefficients in $W$. The cochain complex is defined by linear maps from $V^{\otimes n} \to \widetilde{W}_{z_1, \ldots, z_n}$ that are compatible with the grading and translation operator, and satisfy very technical but natural convergence conditions (called the composable condition). Even though different summands in the coboundary operator have disjoint regions of convergence, after analytic continuation, we operate in the space of $\overline{W}$-valued rational functions, and thus make sense of the coboundary operator. 

Based on this construction, in \cite{H-1st-2nd-Coh}, Huang proved that the first cohomology describes outer-derivations of the vertex algebra, and the second cohomology describes the square-zero extensions and first-order deformations. In \cite{Q-Coh}, the second author generalized Huang's construction to meromorphic open-string vertex algebras, where the vertex operators satisfy only associativity without any commutativity. In \cite{HQ-Red}, Yi-Zhi Huang and the second author gave a cohomological criterion of semisimplicity. In \cite{Q-Ext-1}, the second author gave a cohomological description of the $\Ext^1$-group of two left-modules for a meromorphic open-string vertex algebra. In \cite{KQ}, Vladimir Kovalchuk and the second author established an algorithm for classifying first-order deformations for all freely generated vertex algebras. 

All these results are expected to hold for vertex superalgebras. Moreover, in recent discussions and computations with Andrew Linshaw and Yi-Zhi Huang, the second author believes that we are capable of classifying full formal deformations of a large class of strongly generated vertex superalgebras. Of particular interest are the W-algebras, which are generally $\mathbb{Z}/2$-graded and might have both bosonic and fermionic generating fields of half-integral weights. To accommodate such fields, it is necessary to extend Huang's construction of cohomology theory to the most general vertex superalgebras. To solidify the foundation for all the subsequent generalizations, it is essential to examine all the details carefully. This paper serves the purpose. 

Passing from commutativity to supercommutativity is not as easy as it seems. For the Harrison cohomology of a commutative associative algebra, the corresponding cochain complex is obtained from imposing a shuffle sum condition on the space of Hochschild cochains. For a supercommutative associative algebra, we need to modify each summand in the shuffle sum condition by an additional sign factor to keep track of the positions of the odd elements. This additional sign factor proves to be quite confusing, as some of the most obvious expectations actually fail (See Remark \ref{Rema-sigma-o}, \ref{Rema-sigma-o-fail}). This might explain the surprising situation that we were unable to find any previous literature generalizing Harrison cohomology to supercommutative algebras. 

For the convenience of the reader, we have decided to organize the paper as follows: In Section 2, we elaborate on the details of the generalization of  Harrison cohomology to a supercommutative associative algebra. In Section 3, we construct the cohomology theory for grading-restricted vertex superalgebras. In Section 4, we study the first and second cohomologies. 

\noindent\textbf{Acknowledgment.} The authors would like to thank Yi-Zhi Huang and Andrew Linshaw for the discussions on various aspects of the work. The authors would also like to express their admiration for James Lepowsky.

\section{Cohomology theory for supercommutative algebras}

In this section, we give the definitions of superalgebras and supercommutative algebras, which are particular types of associative algebras. To define the cohomology theory, we introduce a shuffle condition on the Hochschild cochain complex. Cochains satisfying the shuffle condtion form a Harrison-like cochain complex for a supercommutative algebra. We will show that the second cohomology defined by Harrison-like cochain complex describes the first-order deformations. 

All vectors spaces will be assumed to be over $\C$, although the results of this section are easy to generalize to other fields.

\subsection{Superalgebra and supermodules}

\begin{defn}
    A \textit{superalgebra} $A$ is a $\Z_2$-graded vector space (a super vector space) $A=A^0\oplus A^1$ (hereafter the $\oplus$ means the direct sum of vector spaces) equipped with a bilinear multiplication $\cdot:A\otimes A\to A$ such that 
    $$A^i\cdot A^j\sseq A^{i+j}. $$
    Elements of $A^0$ are called \textit{even} and elements of $A^1$ \textit{odd}. Given a homogeneous element $a\in A^i$, we say $|a|=i$, and call $|a|$ the \textit{parity} of $a$. A \textit{supercommutative associative superalgebra} is a superalgebra whose multiplication is associative and satisfy the supercommutativity: for every homogeneous $a, b\in A$, 
    $$b\cdot a=(-1)^{|a|\cdot|b|}a\cdot b.$$
    For convenience, we shall use the abbreviation \textit{supercommutative algebra}. 
\end{defn}
\begin{rema}
    Note that in a supercommutative algebra $A$, an even element commutes with every element in $A$. Only for two odd elements $a, b\in A$, we have the anticommutativity $ba = -ab$.  
\end{rema}

\begin{defn}
    We say $M$ is a \textit{supermodule} for the supercommutative algebra $A$ if $M$ is a $\Z_2$-graded vector space $M = M^0 \oplus M^1$ equipped with an action $\cdot: A \otimes M \to M$, such that $M$ forms a module for the associative algebra $A$, and 
    $$A^i \cdot M^j \subseteq M^{i+j}.$$
    We similarly define even elements, odd elements, and the parity $|m|=i$ for homogeneous elements $m \in M^i$. 
\end{defn}

\begin{rema}\label{rema-right-action}
A supermodule $M$ may be regarded as a bimodule for the associative algebra $A$, where the right action $\cdot: M\times A \to M$ is defined by 
    $$m \cdot a = (-1)^{|a|\cdot |m|} a\cdot m$$
    for homogeneous $a\in A, m\in M.$
\end{rema}

As any supercommutative algebra has the structure of an associative algebra, we can study it with the Hochschild cohomology. Let's recall the definition here. 

\begin{defn}
    Let $A$ be an associative algebra, and $M$ an $A$-bimodule. For each $n\in \N$, define the \textit{$n$-th Hochschild cochain} 
    $$\widehat{C}^n(A,M)=\{f:A^{\otimes n}\to M\}. $$ 
    Define the \textit{coboundary map} $\pd:\widehat{C}^n\to \widehat{C}^{n+1}$ by
    \begin{align*}
    \left(\pd f\right)(a_1\otimes\dots\otimes a_{n+1})= \ & a_1f(a_2\otimes\dots\otimes a_{n+1})\\ 
    & +\sum_{i=1}^n (-1)^i f(a_1 \otimes \dots\otimes a_i\cdot a_{i+1}\otimes\dots \otimes a_{n+1}) \\
    & + (-1)^{n+1}f(a_1\otimes\dots\otimes a_n)a_{n+1}
    \end{align*}
\end{defn}

\begin{thm}
    For all $n\in\N$, $\pd^2 \widehat{C}^n=0$ (or simply $\pd^2=0$). Therefore, the collection $(\widehat{C}^n,\pd)_{n\in \N}$ forms a cochain complex, called the \textit{Hochschild cochain complex}. \label{HochschildThm}
\end{thm}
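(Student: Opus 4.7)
The plan is to prove $\partial^2 = 0$ by direct expansion, following the classical argument for Hochschild cohomology. Given $f \in \widehat{C}^n(A,M)$ and $a_1, \ldots, a_{n+2} \in A$, applying the definition of $\partial$ to the cochain $\partial f \in \widehat{C}^{n+1}(A,M)$ produces $n+3$ outer terms (a left-action term, $n+1$ middle-contraction terms, and a right-action term), each of which expands into $n+2$ inner summands when we substitute the definition of $\partial f$. This yields $(n+3)(n+2)$ signed summands that I would organize by the location of the two operations.

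I would then check four types of cancellation. First, if the two operations are middle contractions at disjoint non-adjacent positions $p < q$ with $q \ge p+2$, each such combination arises twice: outer at $p$ and inner at shifted position $q-1$, versus outer at $q$ and inner at position $p$, with signs $(-1)^{p+q-1}$ and $(-1)^{p+q}$ respectively, so they cancel. Second, if the two middle contractions touch three consecutive inputs $a_p, a_{p+1}, a_{p+2}$, one ordering produces $(a_p a_{p+1}) a_{p+2}$ and the other produces $a_p (a_{p+1} a_{p+2})$, with opposite signs; associativity of $A$ identifies the two expressions, and cancellation follows. Third, a boundary action paired with a middle contraction cancels against the reversed ordering: for instance, outer left action followed by inner middle contraction of $a_i, a_{i+1}$ (with $i \ge 2$) gives sign $(-1)^{i-1}$, while outer middle contraction at position $i$ followed by inner left action gives sign $(-1)^i$, yielding the same expression $a_1 \cdot f(\ldots a_i a_{i+1} \ldots)$ with opposite sign. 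Fourth, the three remaining pure-boundary summands, namely outer left with inner left, outer right with inner right, and outer left with inner right (together with their counterparts as outer middle at $i=1$ or $i=n+1$), cancel using the $A$-bimodule axioms $a(bm) = (ab)m$, $(ma)b = m(ab)$, and $a(mb) = (am)b$ from Remark \ref{rema-right-action}.

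The main obstacle is purely sign bookkeeping: when the outer operation sits at position $i$ and the inner operation at position $j$, the index $j$ refers to the already-contracted tuple of length $n+1$, so it corresponds to original position $j$ if $j < i$ and to position $j+1$ if $j \ge i$. Once this shift is made explicit, the cancellations above become mechanical. Notably, no super-sign bookkeeping is required at this level, because the super structure of $A$ and $M$ enters this proof only via the choice of right action in Remark \ref{rema-right-action}, and all bimodule identities used above are insensitive to the sign in that choice; the supercommutativity of $A$ will only become active later when we impose the shuffle condition that cuts the Hochschild complex down to the Harrison-like subcomplex.
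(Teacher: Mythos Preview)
Your direct-expansion argument is correct: the $(n+3)(n+2)$ summands do cancel in the four patterns you describe, and your observation that the super structure is absorbed entirely into the bimodule structure of Remark~\ref{rema-right-action} (so that the computation is literally the ordinary Hochschild one) is the right way to see why no extra signs appear. One minor wording issue: the bimodule identities themselves do rely on the supercommutativity of $A$ to hold (as one checks when verifying that $m\cdot a := (-1)^{|a||m|}a\cdot m$ really makes $M$ a bimodule), so it is not that the identities are ``insensitive to the sign'' but rather that, once Remark~\ref{rema-right-action} has established the bimodule structure, the $\partial^2=0$ computation proceeds verbatim as for any associative bimodule.

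The paper takes a different route: it does not carry out the computation at all, but simply invokes the simplicial/cosimplicial framework in \cite{Weibel}, where $\partial^2=0$ is an automatic consequence of the cosimplicial identities for the coface maps. Your approach is more elementary and self-contained; the paper's approach is more conceptual and foreshadows the strategy used later in Lemma~\ref{CosimpLemma}, where the vertex-algebra coboundary is handled by verifying cosimplicial identities rather than by expanding $\delta^2$ directly.
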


\begin{proof}
    A conceptual proof can be found in \cite{Weibel} using simiplicial methods. We shall not elaborate the details here. 
\end{proof}

\begin{defn}
    Let $\widehat{Z}^n(A,M)=\ker(\pd)\subset \widehat{C}^n(A, M)$ be the space of Hochschild cocycles, $\widehat{B}^n(A,M)=\im(\pd)\subset \widehat{C}^n(A, M)$ be the space of Hochschild coboundaries. By Theorem \ref{HochschildThm},  $\widehat{B}^n(A,M)\sseq \widehat{Z}^n(A,M)$. The quotient $\widehat{H}^n(A,M)=\widehat{Z}^n(A,M)/\widehat{B}^n(A,M)$ is the \textit{$n$-th Hochschild cohomology} of $A$ with respect to $M$. 
\end{defn}

\subsection{Shuffles and the super shuffle sum}
Recall the Harrison cohomology for an commutative associative algebra is defined by a subcochain complex of the Hochschild cochain complex. Using the same idea, we shall now give a cohomology theory to supercommutative algebras. To start, we need to define shuffles and the shuffle sums.

\begin{defn}
     Let $n\in \Z_+$ and fix $p\in \Z, 1 \leq p\leq n-1$. A permutation $\sigma\in \Sym(1, ..., n)$ is a \textit{shuffle} of the letters $1, ..., n$ if 
     \begin{align}
        \sigma(1)<\dots<\sigma(p) \text{ and } \sigma(p+1)<\dots<\sigma(n). \label{shuffle-p}
     \end{align} 
     Let $J_p(1, ..., n)$ be the collection of all shuffles satisfying (\ref{shuffle-p}). Clearly, a shuffle $\sigma\in J_p(1, ..., n)$ may be identified with two increasing sequences respectively with lengths $p$ and $n-p$, consisting of distinct numbers in $\{1, ..., n\}$. In particular, 
     $$| J_p(1, ..., n)| = \binom{n}{p}. $$
\end{defn}
\begin{rema}
    When taking the sum over all shuffles in $J_p(1, ..., n)$, it is important to note that, unlike how the summation of $\sigma\in \Sym(1, ...,n)$ can be interchanged with the summation of $\sigma^{-1}\in \Sym(1, ..., n)$, we cannot replace $\sigma$ in the summation of $\sigma \in J_p(1, ..., n)$ by $\sigma^{-1}$. Generally, $\sigma\in J_p(1, ..., n)$ does not imply $\sigma^{-1}\in J_p(1, ..., n)$. For example, consider $n = 5$ and 
    $$\sigma = \begin{pmatrix}
    1 & 2 & 3 & 4 & 5\\
    2 & 4 & 5 & 1 & 3
    \end{pmatrix}.$$
    Then $\sigma\in J_3(1, ..., 5)$. But 
    $$\sigma^{-1} = \begin{pmatrix}
    1 & 2 & 3 & 4 & 5\\
    4 & 1 & 5 & 2 & 3
    \end{pmatrix}$$
    is no longer an element in $J_p(1, ..., n)$ for any $p$. 
\end{rema}
Recall that $A^{\otimes n}$ admits a right action of the symmetric group $\Sym(1,...,n)$, defined by 
$$\sigma(a_1\otimes \cdots \otimes a_n) = a_{\sigma(1)}\otimes \cdots \otimes a_{\sigma(n)}.$$
This right action induces a left $\Sym(1,...,n)$-action on linear maps $f: A^{\otimes n}\to M$, given by
$$(\sigma f)(a_1 \otimes \cdots \otimes a_n) = f(a_{\sigma(1)}\otimes \cdots \otimes a_{\sigma(n)}). $$

\begin{defn}
Fix homogeneous $a_1, ..., a_n\in A$ where the odd elements are $a_{\alpha_1}, ..., a_{\alpha_k}$ for $1\leq \alpha_1 < \cdots < \alpha_k \leq n$. For a fixed $\sigma\in J_p(1, ..., n)$, let $1\leq \beta_1 < \cdots < \beta_k \leq n$ such that $a_{\sigma(\beta_1)}, ..., a_{\sigma(\beta_k)}$ are odd. Then $\sigma(\beta_1), ..., \sigma(\beta_k)$ is a permutation of $\{\alpha_1, ..., \alpha_k\}$. We define $\sigma^o \in \text{Sym}\{\alpha_1, ..., \alpha_k\}$ by 
$$\sigma^o (\alpha_m) = \sigma(\beta_m), m = 1, ..., k. $$
For convenience, we shall refer $\alpha_1, ..., \alpha_k$ as the \textit{odd indices} of the sequence $(1, ..., n)$, $\sigma(\beta_1), ..., \sigma(\beta_k)$ as the \textit{odd indices} of the sequence $(\sigma(1), ..., \sigma(n)).$ \label{odd-element-definition}
\end{defn}
\begin{rema}\label{Rema-sigma-o}
\begin{enumerate}
\item Conceptually, $\sigma^o$ keeps track of the change of ordering of the odd elements. In particular, if we consider the product $a_1 \cdot \cdots \cdot a_n$ in $A$, then 
$$a_{\sigma(1)}\cdot \cdots \cdot a_{\sigma(n)} = (-1)^{\sigma^o} a_1 \cdot \cdots \cdot a_n $$
where $(-1)^{\sigma^o}$ is the signature of $\sigma^o \in \text{Sym}\{\alpha_1, ..., \alpha_k\}$. 
\item \label{Rema-sigma-o-fail} Care is needed with repeated application of the $^o$ operation, as each new instance of $^o$ involves an implicit reordering of the input indices. Therefore it is sometimes more useful to explicitly calculate the value of $(-1)^{\sigma^o}$ then to attempt to track the action of the $^o$ operation across multiple permutations. We shall illustrate two examples $\sigma, \tau\in S_n$ such that $\sigma \tau = id$, but $\sigma^o \tau^o \neq (\sigma\tau)^o = id$. 
\item \label{Rema-sigma-o-1} Let
$$\sigma = \begin{pmatrix}
    1 & 2 & 3 & \cdots & n-1 & n & n+1\\
    n+1 & 1 & 2 & \cdots & n-2 & n-1 & n
\end{pmatrix}$$
Then for $a_1, ..., a_{n+1}\in A$ homogeneous, we have
$$(-1)^{\sigma^o} = (-1)^{(|a_1| + \cdots + |a_{n}|)\cdot |a_{n+1}|}.$$
\item \label{Rema-sigma-o-2} Let
    $$\tau = \begin{pmatrix}
        1 & 2 & 3 & \cdots & n-1 & n & n+1\\
        2 & 3 & 4 & \cdots & n & n+1 & 1
    \end{pmatrix},$$
    namely, the inverse of $\sigma$ studied in \ref{Rema-sigma-o-1}. Then for $a_1, ..., a_n \in A$ homogeneous, we have 
    $$(-1)^{\tau^o} = (-1)^{(|a_2|+\cdots +|a_{n+1}|)\cdot |a_1|}. $$
\item If $\sigma^o\tau^o = id$, then $(-1)^{\sigma^o}(-1)^{\tau^o}$ should be identically 1 regardless of parities of $a_1, ..., a_n$. This is obviously not the case, as we may choose $|a_1| = 0$, $|a_2| + \cdots + |a_{n}| = 1,$ and $|a_{n+1}|=1. $
\end{enumerate}

\end{rema}

\begin{defn}
Fix positive integers $n$ and $p\leq n-1$. For a linear map $f: A^{\otimes n}\to M$, we define the \textit{super shuffle sum} 
$$su_{n,p} f: A^{\otimes n} \to M$$
by
\begin{align*}
    (su_{n,p} f)(a_1\otimes \cdots \otimes a_n) =  \sum_{\sigma\in J_p(1, ..., n)}(-1)^\sigma (-1)^{(\sigma\iv)^o} f(a_{\sigma\iv(1)}\otimes \cdots \otimes a_{\sigma\iv(n)}).
\end{align*}
\end{defn}

\begin{rema}
    Note that $(-1)^\sigma$ is the signature of $\sigma\in S_n$ that is independent of the choice of $a_1, ..., a_n$, while $(-1)^{\sigma^o}$ depends both on $\sigma$ and the choice of $a_1, ..., a_n$. 
\end{rema}

\subsection{The super Harrison cochain}

\begin{defn}\label{superHarrison-defn}
    Let $A$ be a supercommutative algebra. 
    \begin{enumerate}
    \item For each $n\in \N$, the tensor product $A^{\otimes n}$ is also a superspace with parity given by
    $$|a_1\otimes \cdots \otimes a_n| = |a_1| + \cdots + |a_n| \in \Z/2\Z$$
    for homogeneous $a_1, ..., a_n\in A$. 
    \item Let $M$ be an $A$-module. A linear map $f: A^{\otimes n} \to M$ is \textit{parity-preserving} if 
    $$|f(a_1\otimes \cdots \otimes a_n)| = |a_1| + \cdots + |a_n|\in \Z/2\Z.$$
    \item For each $n\in \N$, define the $n$-th \textit{super-Harrison cochain} by
    $$C^n(A, M) = \{f: A^{\otimes n} \to M: f \text{ is parity-preserving, and }su_{n,p}f = 0, p = 1, ..., n-1 \}.$$
    Clearly, $C^n(A, M)$ is a subset of the Hochschild cochain $\widehat{C}^n(A, M)$. 
    \end{enumerate}
\end{defn}

\begin{thm}\label{superHarrisonthm}
    With all notations as in Definition \ref{superHarrison-defn}, for every $f\in C^n(A,M)$, $$\partial f \in C^{n+1}(A,M).$$ 
\end{thm}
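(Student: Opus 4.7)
The plan is to prove the statement in two parts: that $\partial f$ is parity-preserving, and that $su_{n+1,p}(\partial f) = 0$ for each $p \in \{1, \ldots, n\}$. The parity claim is a direct check on the $n+2$ summands of $(\partial f)(a_1 \otimes \cdots \otimes a_{n+1})$: multiplication in $A$ and both sides of the $A$-action on $M$ (with the right action defined via Remark \ref{rema-right-action}) preserve parity, and $f$ is parity-preserving by hypothesis, so every summand has parity $\sum_{j=1}^{n+1} |a_j|$.

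For the shuffle-sum condition, fix $p \in \{1, \ldots, n\}$ and expand $su_{n+1,p}(\partial f)$ by substituting the formula for $\partial f$, producing $n+2$ groups of terms $G_0, G_1, \ldots, G_{n+1}$. Parametrize a shuffle $\sigma \in J_p(1, \ldots, n+1)$ by the subset $P_\sigma \subseteq \{1, \ldots, n+1\}$ of positions occupied by first-block values. For $1 \le i \le n$, the contribution from $\sigma$ to $G_i$ carries the product $a_{\sigma^{-1}(i)} \cdot a_{\sigma^{-1}(i+1)}$ inside $f$; I would split into three cases according to whether $\{i, i+1\} \subseteq P_\sigma$, $\{i, i+1\} \cap P_\sigma = \emptyset$, or $|\{i, i+1\} \cap P_\sigma| = 1$, calling these (A), (B), (C).

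In case (A), $\sigma^{-1}(i+1) = \sigma^{-1}(i) + 1$, and the merged pair $a_j a_{j+1}$ with $j = \sigma^{-1}(i)$ becomes a single first-block element of length $p-1$, so summing over such $\sigma$ yields, up to an explicit sign, a shuffle sum of the form $su_{n,\, p-1} f^{(j)}$ on a modified cochain $f^{(j)} : A^{\otimes n} \to M$. This vanishes by the hypothesis $f \in C^n(A, M)$ whenever $2 \le p \le n$, and case (A) is vacuous at $p = 1$ since the first block is then a singleton. Case (B) is symmetric, producing $su_{n, p} g^{(k)} = 0$ for $1 \le p \le n - 1$ and being vacuous at $p = n$. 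The remaining contributions are case (C) together with $G_0$ and $G_{n+1}$; these should cancel by invoking supercommutativity $a \cdot b = (-1)^{|a||b|} b \cdot a$ in $A$, converting edge multiplications via the right action of Remark \ref{rema-right-action}, and pairing each cross-boundary product with a corresponding edge term.

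The main obstacle is the super-sign bookkeeping in this final cancellation. As Remark \ref{Rema-sigma-o-fail} warns, the operation $^o$ does not distribute over composition of permutations, so every sign must be computed directly rather than manipulated symbolically. Each invocation of supercommutativity injects a factor $(-1)^{|a_j||a_k|}$ that must match exactly the change in $(-1)^{(\sigma^{-1})^o}$ induced by re-ordering the input sequence. To manage this, I would fix the parity pattern of $(a_1, \ldots, a_{n+1})$ at the outset and apply the identity of Remark \ref{Rema-sigma-o}(1), $a_{\sigma(1)} \cdots a_{\sigma(n+1)} = (-1)^{\sigma^o} a_1 \cdots a_{n+1}$, to translate shuffle operations on the cochain into manipulations of super-products in $A$, where super-signs become transparent and can be matched piece by piece.
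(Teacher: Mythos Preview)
Your plan for cases (A) and (B) is correct and matches the paper's treatment of the ``both-in-first-block'' and ``both-in-second-block'' portions of the middle sum. The gap is in your proposed handling of the remaining terms: you assert that case (C) together with $G_0$ and $G_{n+1}$ cancel as a single block, ``pairing each cross-boundary product with a corresponding edge term.'' This is not how the cancellation actually works, and attempting it that way will not close.

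The structural point you are missing is that $G_0$ and $G_{n+1}$ themselves require the shuffle hypothesis on $f$, not merely supercommutativity. A term in $G_0$ has the form $a_{\sigma^{-1}(1)}\cdot f(a_{\sigma^{-1}(2)}\otimes\cdots\otimes a_{\sigma^{-1}(n+1)})$, with the extra element \emph{outside} $f$; a term in case (C) has the form $f(\ldots, a_{\sigma^{-1}(i)}\cdot a_{\sigma^{-1}(i+1)},\ldots)$, with a product \emph{inside} $f$. These cannot be matched directly. What actually happens is:
\begin{itemize}
    \item Case (C) cancels \emph{within itself}. If $i\in P_\sigma$ and $i+1\notin P_\sigma$, then swapping the roles of $i$ and $i+1$ produces another shuffle $\tau\in J_p(1,\ldots,n+1)$ with $i\notin P_\tau$, $i+1\in P_\tau$; the corresponding terms differ by the product of $(-1)$ from the signature, $(-1)^{|a_{\sigma^{-1}(i)}||a_{\sigma^{-1}(i+1)}|}$ from supercommutativity of the inner product, and a matching change in $(-1)^{(\cdot)^o}$, and hence cancel in pairs.
    \item $G_0$ splits according to whether the value $1$ lies in the first or second block of $\sigma$, i.e.\ whether $\sigma^{-1}(1)=1$ or $\sigma^{-1}(1)=p+1$. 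Each sub-sum factors as $a_j\cdot\bigl(su_{n,p-1}f\bigr)$ or $a_j\cdot\bigl(su_{n,p}f\bigr)$ on the remaining $n$ letters, and vanishes by the hypothesis on $f$ except at the boundary values $p=1$ or $p=n$. The analysis of $G_{n+1}$ is symmetric.
    \item The four surviving boundary terms from $G_0$ and $G_{n+1}$ (two at $p=1$, two at $p=n$) then cancel in pairs via the right-action relation of Remark~\ref{rema-right-action}.
\end{itemize}
So the shuffle hypothesis on $f$ is invoked not only in your cases (A) and (B) but also, crucially, in reducing $G_0$ and $G_{n+1}$. Your last paragraph about sign bookkeeping is well taken, but the bookkeeping needs to be carried out within the correct pairing scheme above; the ``cross-boundary with edge'' pairing you propose does not exist.
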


\begin{proof}
    Fix a positive integer $p\leq n$. We analyze the super shuffle sum $su_{n+1, p}(\pd f),$ namely, 
    \begin{align}
    & \sum_{\sigma\in J_p(1, ..., n+1)}(-1)^\sigma (-1)^{(\sigma\iv)^o}(\pd f)(a_{\sigma^{-1}(1)}\otimes\dots\otimes a_{\sigma^{-1}(n+1)}) \nonumber\\
    = \ & \sum_{\sigma\in J_p(1, ..., n+1)}(-1)^\sigma (-1)^{(\sigma\iv)^o}a_{\sigma\iv(1)}f\left(a_{\sigma\iv(2)}\otimes\dots\otimes a_{\sigma\iv(n+1)}\right)  \label{shuffle-sum-of-df-1}\\
    & +\sum_{\sigma\in J_p(1, ..., n+1)}(-1)^\sigma (-1)^{(\sigma\iv)^o}(-1)^{n+1}f\left(a_{\sigma\iv(1)}\otimes\dots\otimes a_{\sigma\iv(n)}\right)a_{\sigma\iv(n+1)}\label{shuffle-sum-of-df-2}\\
    & +\sum_{i=1}^n(-1)^i\sum_{\sigma\in J_p(1, ..., n+1)}(-1)^\sigma (-1)^{(\sigma\iv)^o}f(a_{\sigma^{-1}(1)}\otimes \dots\otimes a_{\sigma\iv(i)}\cdot a_{\sigma\iv(i+1)}\otimes\dots a_{\sigma^{-1}(n+1)})\label{shuffle-sum-of-df-3}
    \end{align}
    We shall proceed to show that both $(\ref{shuffle-sum-of-df-1}) + (\ref{shuffle-sum-of-df-2})$ and (\ref{shuffle-sum-of-df-3}) all vanish.\\

    \noindent \textbf{The analysis of (\ref{shuffle-sum-of-df-1})}

    Note that $\sigma\in J_p(1, ..., n+1)$. Then the number 1 sits either in the first increasing sequence or the second. This means either $\sigma(1) = 1$ or $\sigma(p+1)=1$. Therefore, either $\sigma^{-1}(1)=1$ or $\sigma^{-1}(1) = p+1$. 
    \begin{enumerate}[leftmargin=*]
        \item If $\sigma^{-1}(1)=1$, then the increasing sequences
        $$\sigma(2) < \cdots < \sigma(p), \sigma(p+1) < \cdots < \sigma(n+1)$$
        form a shuffle $\tau\in J_{p-1}(2, ..., n+1)$, with $\tau(m)= \sigma(m), m = 2, ..., n+1$. Clearly, the extension of $\tau$ in $\Sym(1, ..., n+1)$ (obtained by supplementing $\tau(1) = 1$) coincides with $\sigma$. Therefore, we have 
        $$(-1)^\tau = (-1)^\sigma, (-1)^{\tau^o} = (-1)^{\sigma^o}.$$
        Thus 
        \begin{align}
            & \sum_{\substack{\sigma\in J_p(1, ..., n+1)\\ \sigma^{-1}(1)=1}}(-1)^\sigma (-1)^{(\sigma\iv)^o}a_{\sigma\iv(1)}f\left(a_{\sigma\iv(2)}\otimes\dots\otimes a_{\sigma\iv(n+1)}\right) \label{shuffle-sum-of-df-1-1}\\
             = \ & a_1 \cdot \sum_{\substack{\tau\in J_p(2, ..., n+1)}}(-1)^\tau (-1)^{(\tau\iv)^o}f\left(a_{\tau\iv(2)}\otimes\dots\otimes a_{\tau\iv(n+1)}\right) \nonumber
        \end{align}
        But since $f\in C^n(A, M)$, we know that $su_{n,p-1} f = 0$ in case $p\geq 2$. Evaluated at $a_2 \otimes \cdots \otimes a_{n+1}$, we see that 
        $$\sum_{\substack{\tau\in J_p(2, ..., n+1)}}(-1)^\tau (-1)^{(\tau\iv)^o}f\left(a_{\tau\iv(2)}\otimes\dots\otimes a_{\tau\iv(n+1)}\right) = 0.$$
        Therefore, (\ref{shuffle-sum-of-df-1-1}) vanishes if $p\geq 2$. 
        \item If $\sigma^{-1}(1)=p+1$, then the increasing sequences
        $$\sigma(1) < \cdots < \sigma(p), \sigma(p+2) < \cdots < \sigma(n+1)$$
        form a shuffle $\tau\in J_{p}(1,..., \widehat{p+1},..., n+1)$, with 
        $$\tau(m)= \begin{cases}
            \sigma(m)-1 & \text{ if }\sigma(m)\leq p\\
            \sigma(m) & \text{ if }\sigma(m) \geq p+2
        \end{cases}.$$ 
        To calculate the signature $(-1)^\tau$, let $\tilde{\tau}$ be the extension of $\tau$ in $\Sym(1, ..., n+1)$, i.e., 
        $$\tilde{\tau}(m) = \begin{cases}
        \tau(m) & \text{ if }m \neq p+1\\
        p+1 & \text{ if }m = p+1
        \end{cases}.$$
        Then $(-1)^\tau = (-1)^{\tilde{\tau}}$. Moreover, we have
        $$\sigma = (1, 2, ..., p+1)\tilde{\tau}.$$
        Therefore, 
        $$(-1)^\sigma = (-1)^p (-1)^{\tau}=(-1)^p (-1)^{\tilde{\tau}}.$$
        To calculate the signature $(-1)^{\tau^o}$, let $1\leq \alpha_1 < \cdots < \alpha_k \leq n$ be the indices of the odd elements in the sequence $(a_1, ..., a_n)$. Let $1\leq \beta_1 < \cdots < \beta_k \leq n$ be the indices of the odd elements in the sequence $(a_{\sigma^{-1}(1)}, ..., a_{\sigma^{-1}(n)}).$ 
        \begin{itemize}[leftmargin = *]
            \item If $a_{p+1}$ is even, then $p+1$ appears neither in the sequence $\alpha_1, ..., \alpha_k$ nor $\sigma^{-1}(\beta_1), ..., \sigma^{-1}(\beta_k)$. From the relation
            $$\sigma^{-1} = \tilde{\tau}^{-1}(p+1, p, ..., 2, 1),$$ 
            we know that 
            $$\sigma^{-1}(\beta_m) = \begin{cases}
                \tau^{-1}(\beta_m-1) & \text{ if }\beta_m \leq p\\
                \tau^{-1}(\beta_m) & \text{ if }\beta_m \geq p+2
            \end{cases}$$
            In other words, only the $\beta_m$'s before $p+1$ are shifted by 1. Others remains to be the same. Thus, the odd elements in $(a_{\tau^{-1}(1)}, ..., a_{\tau^{-1}(p-1)}, a_{\tau^{-1}(p+1)},..., a_{\tau^{-1}(n+1)})$ remain to be $(a_{\sigma^{-1}(\beta_1)}, ..., a_{\sigma^{-1}(\beta_{n+1})})$. We may conclude that $(\sigma^{-1})^o = (\tau^{-1})^o$. In particular, $(-1)^{\sigma^o} = (-1)^{\tau^o}.$ Therefore, we conclude that 
            \begin{align}  
                & \sum_{\substack{\sigma\in J_p(1, ..., n+1)\\ \sigma^{-1}(1)=p+1}}(-1)^\sigma (-1)^{(\sigma\iv)^o}a_{\sigma\iv(1)}f\left(a_{\sigma\iv(2)}\otimes\dots\otimes a_{\sigma\iv(n+1)}\right) \label{shuffle-sum-of-df-1-2}\\
                = \ & (-1)^p a_{p+1} \cdot \sum_{\substack{\tau\in J_p(1, ..., \widehat{p+1}, ..., n+1)}}(-1)^\tau (-1)^{(\tau\iv)^o}f\left(a_{\tau\iv(1)}\otimes\dots \otimes \reallywidehat{a_{\tau^{-1}(p+1)}} \otimes \cdots \otimes a_{\tau\iv(n+1)}\right) \nonumber
            \end{align}
            Since $f\in C^n(A, M)$, we know that $su_{n,p}f = 0$ in case $p\leq n-1$. Evaluated at $a_{\tau\iv(1)}\otimes\dots \otimes \reallywidehat{a_{\tau^{-1}(p+1)}} \otimes \cdots \otimes a_{\tau\iv(n+1)}$, we see that 
            $$\sum_{\substack{\tau\in J_p(1, ..., \widehat{p+1}, ..., n+1)}}(-1)^\tau (-1)^{(\tau\iv)^o}f\left(a_{\tau\iv(1)}\otimes\dots \otimes \reallywidehat{a_{\tau^{-1}(p+1)}} \otimes \cdots \otimes a_{\tau\iv(n+1)}\right) = 0.$$
            Therefore, (\ref{shuffle-sum-of-df-1-2}) vanishes if $p\leq n-1$. 

            \item If $a_{p+1}$ is odd, then $\beta_1=1$ and $\sigma^{-1}(\beta_1) = p+1$. Let $$q = m_{p+1}(1) = \# \{1\leq j \leq p+1 | a_j \text{ is odd}\}.$$
            Then $\alpha_q = p+1$. So 
            $$(\sigma^{-1})^o = \begin{pmatrix}
                \alpha_1 & \alpha_2 &  \cdots & \alpha_{q-1} & \alpha_q = p+1 & \alpha_{q+1} & \cdots & \alpha_k \\
                p+1 & \sigma^{-1}(\beta_2) & \cdots &\sigma^{-1}(\beta_{q-1}) & \sigma^{-1}(\beta_{q}) & \sigma^{-1}(\beta_{q+1}) & \cdots & \sigma^{-1}(\beta_k)
            \end{pmatrix}.$$
            With the same analysis as above, we see that 
            $$(\tau^{-1})^o = \begin{pmatrix}
                \alpha_1 &  \cdots & \alpha_{q-1} & \alpha_{q+1} & \cdots & \alpha_k \\
                \sigma^{-1}(\beta_2) & \cdots &\sigma^{-1}(\beta_{q-1}) & \sigma^{-1}(\beta_{q}) & \cdots & \sigma^{-1}(\beta_{k}) \end{pmatrix}.$$
            Therefore, we conclude that (\ref{shuffle-sum-of-df-1-2}) equals to 
            \begin{align*}
                (-1)^p a_{p+1}\cdot (-1)^{q-1} \cdot \sum_{\substack{\tau\in J_p(1, ..., \widehat{p+1}, ..., n+1)}}(-1)^\tau (-1)^{(\tau\iv)^o}f\left(a_{\tau\iv(1)}\otimes\dots \otimes \reallywidehat{a_{\tau^{-1}(p+1)}} \otimes \cdots \otimes a_{\tau\iv(n+1)}\right)
            \end{align*}
            By a similar argument as above, we see that (\ref{shuffle-sum-of-df-1-2}) vanishes if $p \leq n-1$.
        \end{itemize}
        \item We have handled most parts of (\ref{shuffle-sum-of-df-1}) except for the case with where $p=1$ and the part with $\sigma^{-1}(1)=1$, and the case where $p = n$ and the part with $\sigma^{-1}(1)=p+1=n+1$. To process these terms, we will need the corresponding contributions from (\ref{shuffle-sum-of-df-2}). 
    \end{enumerate}
    
    \noindent\textbf{The analysis of $(\ref{shuffle-sum-of-df-1})+(\ref{shuffle-sum-of-df-2})$}

    With a similar procedure as in the analysis of (\ref{shuffle-sum-of-df-1}), we may show that the parts of (\ref{shuffle-sum-of-df-2}) where $p \leq n-1, \sigma^{-1}(n+1) = n+1$ and $p\geq 2, \sigma^{-1}(n+1)=p$ all vanish. What remains are the case with $p=n$ and the part with $\sigma^{-1}(n+1) = n+1$, and the case with $p=1$ and the part with $\sigma^{-1}(n+1) = 1$. Fix $a_1, ..., a_n\in A$ homogeneous and assume the odd elements are $a_{\alpha_1}, ..., a_{\alpha_k}$ for $1\leq \alpha_1 < \cdots < \alpha_k \leq n+1$. We first combine the remainder terms of (\ref{shuffle-sum-of-df-1}) and (\ref{shuffle-sum-of-df-2}) in the case of $p=1$. 
    \begin{enumerate}
        \item If $p=1$ and $\sigma^{-1}(1)=1$, this means $\sigma = id$. Thus, $\sigma^o= id$. The corresponding term in (\ref{shuffle-sum-of-df-1}) is 
        \begin{align}
        a_1 f(a_2 \otimes \cdots \otimes a_{n+1}). \label{shuffle-sum-of-df-1-b1}
        \end{align}
        \item If $p=1$ and $\sigma^{-1}(n+1) = 1,$ this means 
        $$\sigma^{-1} = \begin{pmatrix}
        1 & 2 & \cdots & n & n+1 \\
        2 & 3 & \cdots & n+1 & 1
        \end{pmatrix} \implies (-1)^\sigma = (-1)^n$$
        We have analyzed $(\sigma^{-1})^o$ in Remark \ref{Rema-sigma-o} \ref{Rema-sigma-o-2} and concluded that 
        $$(-1)^{(\sigma\iv)^o} = (-1)^{(k-1)\cdot |a_{1}|}.$$
        The corresponding term in (\ref{shuffle-sum-of-df-2}) is 
        \begin{align}
            -(-1)^{(|a_2| + \cdots + |a_n|)\cdot |a_{1}|} f(a_2\otimes \cdots \otimes a_{n+1})a_{1}. \label{shuffle-sum-of-df-2-b2}
        \end{align}
    \end{enumerate}
    Note that since $M$ is a supermodule and $f$ preserves parity, from Remark \ref{rema-right-action}, we know that (\ref{shuffle-sum-of-df-2-b2}) is precisely the negative of (\ref{shuffle-sum-of-df-1-b1}). Thus they sum up to be zero. 

    In the case of $p=n$, we similarly combine the remainder terms of (\ref{shuffle-sum-of-df-1}) and (\ref{shuffle-sum-of-df-2}). 
    \begin{enumerate}
        \item If $p=n$ and $\sigma^{-1}(n+1) = 1$, this means 
        $$\sigma^{-1} = \begin{pmatrix}
        1 & 2 & \cdots & n & n+1 \\
        n+1 & 1 & \cdots & n-1 & n
        \end{pmatrix} \implies (-1)^\sigma = (-1)^n$$
        We have analyzed $(\sigma^{-1})^o$ in Remark \ref{Rema-sigma-o} \ref{Rema-sigma-o-1} and concluded that 
        $$(-1)^{(\sigma\iv)^o} = (-1)^{(k-1)\cdot |a_{n+1}|}.$$
        The corresponding term in (\ref{shuffle-sum-of-df-1}) is 
        \begin{align}
            (-1)^{n}(-1)^{(|a_1| + \cdots + |a_n|)\cdot |a_{n+1}|} a_{n+1} f(a_1 \otimes \cdots \cdots a_n). \label{shuffle-sum-of-df-1-b2}
        \end{align}
        \item If $p=n$ and $\sigma^{-1}(n+1) = n+1$, this means $\sigma = id$. Thus, $\sigma^o = id$. The corresponding term in (\ref{shuffle-sum-of-df-2}) is 
        \begin{align}
            (-1)^{n+1} f(a_1\otimes \cdots \otimes a_n)a_{n+1}. \label{shuffle-sum-of-df-2-b1}
        \end{align}
    \end{enumerate}
    Again from the assumption of $f$ and Remark \ref{rema-right-action}, we know that (\ref{shuffle-sum-of-df-1-b2}) is precisely the negative of (\ref{shuffle-sum-of-df-2-b1}). Thus they sum up to be zero. 
   
    \noindent \textbf{The analysis of (\ref{shuffle-sum-of-df-3}).} 

    The sum (\ref{shuffle-sum-of-df-3}) may be decomposed as 
    \begin{align}
        & \sum_{i=1}^n (-1)^i \sum_{\substack{\sigma\in J_p(1, ..., n+1) \\ 1\leq \sigma^{-1}(i) \leq p\\ 1\leq \sigma^{-1}(i+1) \leq p}} (-1)^\sigma (-1)^{(\sigma\iv)^o}f\left(a_{\sigma^{-1}(1)}\otimes \dots\otimes a_{\sigma\iv(i)}\cdot a_{\sigma\iv(i+1)}\otimes\dots a_{\sigma^{-1}(n+1)}\right)\label{shuffle-sum-of-df-3-1}\\
        & + \sum_{i=1}^n (-1)^i \sum_{\substack{\sigma\in J_p(1, ..., n+1) \\ 1\leq \sigma^{-1}(i)\leq p \\
        p+1\leq \sigma^{-1}(i+1) \leq n+1}} (-1)^\sigma (-1)^{(\sigma\iv)^o}f\left(a_{\sigma^{-1}(1)}\otimes \dots\otimes a_{\sigma\iv(i)}\cdot a_{\sigma\iv(i+1)}\otimes\dots a_{\sigma^{-1}(n+1)}\right)\label{shuffle-sum-of-df-3-2}\\
        & + \sum_{i=1}^n (-1)^i \sum_{\substack{\sigma\in J_p(1, ..., n+1) \\ 1\leq \sigma^{-1}(i+1) \leq p \\
        p+1\leq \sigma^{-1}(i) \leq n+1}} (-1)^\sigma (-1)^{(\sigma\iv)^o}f\left(a_{\sigma^{-1}(1)}\otimes \dots\otimes a_{\sigma\iv(i)}\cdot a_{\sigma\iv(i+1)}\otimes\dots a_{\sigma^{-1}(n+1)}\right)\label{shuffle-sum-of-df-3-3}\\
        & + \sum_{i=1}^n (-1)^i \sum_{\substack{\sigma\in J_p(1, ..., n+1) \\ p+1\leq \sigma^{-1}(i) \leq n+1\\ p+1 \leq  \sigma^{-1}(i+1) \leq n+1}} (-1)^\sigma (-1)^{(\sigma\iv)^o}f\left(a_{\sigma^{-1}(1)}\otimes \dots\otimes a_{\sigma\iv(i)}\cdot a_{\sigma\iv(i+1)}\otimes\dots a_{\sigma^{-1}(n+1)}\right)\label{shuffle-sum-of-df-3-4}
    \end{align}
    according to the positions of $\sigma^{-1}(i)$ and $\sigma^{-1}(i+1)$. 
    
    \begin{enumerate}[leftmargin=*]
        \item We first analyze (\ref{shuffle-sum-of-df-3-1}). For a fixed $i$ between $1$ and $n$, the increasing sequence corresponding to $\sigma$ looks like 
        \begin{gather*}
            1\leq \sigma(1) < \cdots <  i < i+1 < \cdots < \sigma(p) \leq n+1, \\
            1 \leq \sigma(p+1) < \cdots < \sigma(n+1) \leq n+1.
        \end{gather*}
        In particular, there exists an integer $1\leq q \leq p-1$ such that $q = \sigma^{-1}(i), q+1 = \sigma^{-1}(i+1)$. From the property of shuffles, we know that $q \leq i$. Then (\ref{shuffle-sum-of-df-3-1}) may be rewritten as 
        \begin{align}
            & \sum_{q=1}^{p-1} \sum_{\substack{\sigma\in J_p(1, ..., n+1)\\ \sigma^{-1}(1)=q, \sigma^{-1}(2) = q+1}} (-1)^1 (-1)^\sigma (-1)^{(\sigma\iv)^o} f\left(a_q a_{q+1} \otimes a_{\sigma^{-1}(3)} \otimes \cdots \otimes a_{\sigma^{-1}(n+1)}\right) \label{shuffle-sum-of-df-3-1-1}\\
            & + \cdots + \sum_{q=1}^{p-1} \sum_{\substack{\sigma\in J_p(1, ..., n+1)\\ \sigma^{-1}(i)=q, \sigma^{-1}(i+1) = q+1}} (-1)^i (-1)^\sigma (-1)^{(\sigma\iv)^o} f\left(a_{\sigma^{-1}(1)} \otimes \cdots \otimes a_q a_{q+1} \otimes \cdots \otimes a_{\sigma^{-1}(n+1)}\right) \label{shuffle-sum-of-df-3-1-2}\\
            & + \cdots + \sum_{q=1}^{p-1} \sum_{\substack{\sigma\in J_p(1, ..., n+1)\\ \sigma^{-1}(i)=q, \sigma^{-1}(i+1) = q+1}} (-1)^n (-1)^\sigma (-1)^{(\sigma\iv)^o} f\left(a_{\sigma^{-1}(1)} \otimes \cdots \otimes a_{\sigma^{-1}(n-1)} \otimes a_q a_{q+1} \right) \label{shuffle-sum-of-df-3-1-3}
        \end{align}
        Set 
        $$b_1 = a_1, ..., b_{q-1} = a_{q-1}, b_q = a_q a_{q+1}, b_{q+1} = a_{q+2}, ..., b_{n}= a_{n+1}.$$
        For fixed $i$ between $1$ and $n$, set 
        \begin{gather*}
             \tilde{\tau}^{-1}(1)= \sigma^{-1}(1), ..., \tilde{\tau}^{-1}(i) = \sigma^{-1}(i) = q,\\
             \tilde{\tau}^{-1}(i+1) = \sigma^{-1}(i+2), ..., \tilde{\tau}^{-1}(n) = \sigma^{-1}(n+1), \tilde{\tau}^{-1}(n+1) = n+1.
        \end{gather*} 
        Then it is clear that 
        \begin{align*}
            & f(a_{\sigma\iv(1)} \otimes \cdots \otimes a_{q}a_{q+1} \otimes \cdots \otimes a_{\sigma^{-1}(n+1)})\\
            = \ & f(b_{\tilde{\tau}\iv(1)} \otimes \cdots \otimes b_{q} \otimes \cdots \otimes b_{\tilde{\tau}^{-1}(n)})
        \end{align*}
        It is also clear that
        $$\tilde{\tau}^{-1} = (n+1, n, ..., q+2, q+1) \sigma^{-1} (i+1, i+2, ..., n, n+1).$$
        Taking the inverse, we have 
        $$\tilde{\tau} = (n+1, n, ..., i+2, i+1) \sigma (q+1, q+2, ..., n, n+1).$$
        If $p+1 \leq r \leq n+1$ is the index with $\sigma(r)<i, \sigma(r+1)>i+1$, then 
        \begin{align*}
            & \tilde{\tau}(1)= \sigma(1), ..., \tilde{\tau}(q) = \sigma(q) = i, \\
            & \tilde{\tau}(q+1) = \sigma(q+2)-1, ..., \tilde{\tau}(p-1) = \sigma(p)-1 \\
            & \tilde{\tau}(p) = \sigma(p+1), ..., \tilde{\tau}(r-1) = \sigma(r) < i, \\
            & \tilde{\tau}(r) = \sigma(r+1)-1>i, ..., \tilde{\tau}(n) = \sigma(n+1) -1, \tilde{\tau}(n+1)=n+1.
        \end{align*}
        If we set $\tau$ as the restriction of $\tilde{\tau}$ on $\{1, ..., n\}$, then clearly from $\sigma\in J_p(1, ..., n+1)$, we know that
        $$ \tau \in J_{p-1}(1, ..., n).$$
        The summation over $\sigma\in J_p(1, ..., n+1)$ with $\sigma^{-1}(i) = q, \sigma^{-1}(i+1)=q+1$ reduces to the summation of $\tau\in J_p(1, ..., n)$ with $\tau^{-1}(i) = q$. Also, we have 
        $$(-1)^\tau = (-1)^{\tilde{\tau}} = (-1)^{n-i} (-1)^\sigma (-1)^{n-q} \implies (-1)^i(-1)^\sigma = (-1)^{q}(-1)^\tau.$$ 
        To analyze $(\tau^{-1})^o$, let $1\leq \alpha_1 < \cdots < \alpha_k \leq n+1$ be the odd indices in the sequence of $a_1, ..., a_{n+1}$, $1\leq \beta_1 < \cdots < \beta_k \leq n+1$ be the odd indices in the sequence of $a_{\sigma^{-1}(1)}, ..., a_{\sigma^{-1}(n+1)}$. 
        \begin{itemize}
            \item If $a_q$ and $a_{q+1}$ are both even, then $q, q+1$ do not appear in the odd indices of $(a_1, ..., a_{n+1})$, Let $r$ be the index such that $\alpha_r<q, \alpha_{r+1}>q$. It is clear that $(\tau\iv)^o$ can be expressed as
            \begin{align}
                \begin{pmatrix}
                \alpha_1 & \cdots & \alpha_r & \alpha_{r+1} & \cdots & \alpha_k\\
                \alpha_1 & \cdots & \alpha_r & \alpha_{r+1}-1 & \cdots & \alpha_k -1
                \end{pmatrix} \cdot (\sigma\iv)^o \cdot \begin{pmatrix}
                \alpha_1 & \cdots & \alpha_r & \alpha_{r+1} & \cdots & \alpha_k\\
                \alpha_1 & \cdots & \alpha_r & \alpha_{r+1}-1 & \cdots & \alpha_k -1
                \end{pmatrix}^{-1}\label{conjugate}
            \end{align}
            In particular, $(-1)^{(\tau\iv)^o} = (-1)^{(\sigma\iv)^o}.$
            \item If $a_q$ is odd, $a_{q+1}$ is even, then $b_q$ is odd. In this case, we see that $q$ appears both in the odd indices of $a_1, ..., a_{n+1}$ and $b_1, ..., b_n$. Say $\alpha_r = q$. In this case, we see that $(\tau\iv)^o$ can also be expressed by (\ref{conjugate}). In particular, $(-1)^{(\tau\iv)^o} = (-1)^{(\sigma\iv)^o}.$
            \item If $a_q$ is even, $a_{q+1}$ is odd, then $b_q$ is odd. In this case, it suffices to supplement a conjugation by $(q,q+1)$ in (\ref{conjugate}). We still have $(-1)^{(\tau\iv)^o} = (-1)^{(\sigma\iv)^o}.$
            \item If both $a_q$ and $a_{q+1}$ are odd, say $\alpha_r = q$ and $\sigma^{-1}(\beta_s) = q$. Necessarily, $\alpha_{r+1} = q$, $\beta_s = i$, and $\sigma^{-1}(\beta_s+1) = q+1$. Without loss of generality, assume that $r<s$. In this case, $(\sigma^{-1})^o$ is the permutation given 
            $$\begin{pmatrix}
            \alpha_1 & \cdots & \alpha_r = q & \alpha_{r+1} = q+1 & \cdots & \alpha_s & \alpha_{s+1} & \cdots & \alpha_k \\
            \sigma^{-1}(\beta_1) & \cdots & \sigma^{-1}(\beta_r) & \sigma^{-1}(\beta_{r+1}) & \cdots & q & q+1 & \cdots & \sigma^{-1}(\beta_k)
            \end{pmatrix}$$
            To pass to $(\tau^{-1})^o$, we need to remove both $q$ and $q+1$ from the domain and codomain of $(\sigma^{-1})^o$, resulting in 
            $$\begin{pmatrix}
            \alpha_1 & \cdots & \alpha_{r-1} & \alpha_{r+2} & \cdots & \alpha_{s} & \alpha_{s+1} & \alpha_{s+2} & \cdots & \alpha_k \\
            \sigma^{-1}(\beta_1) & \cdots & \sigma^{-1}(\beta_{r-1}) & \sigma^{-1}(\beta_{r}) & \cdots & \sigma^{-1}(\beta_{s-2}) & \sigma^{-1}(\beta_{s-1}) & \sigma^{-1}(\beta_{s+2}) & \cdots & \sigma^{-1}(\beta_k)
            \end{pmatrix}$$
            whose signature is $(-1)^{r-s}\cdot (-1)^{r-s} (-1)^{(\sigma^{-1})^o} = (-1)^{(\sigma^{-1})^o}$. We then apply the conjugation as in (\ref{conjugate}) to obtain $(\tau^{-1})^o$. In particular, we still have $(-1)^{(\sigma\iv)^o} = (-1)^{(\tau\iv)^o}$. 
        \end{itemize}
        To summarize, we first rewrite (\ref{shuffle-sum-of-df-3-1}) as $(\ref{shuffle-sum-of-df-3-1-1})+ (\ref{shuffle-sum-of-df-3-1-2}) + (\ref{shuffle-sum-of-df-3-1-3})$, then showed that for each $i = 1, ..., n$, the summation is 
        \begin{align*}
            \sum_{q=1}^{p-1}\sum_{\tau\in J_{p-1}(1, ..., n)} (-1)^q (-1)^\tau (-1)^{(\tau\iv)^o} f(b_{\tau^{-1}(1)} \otimes \cdots \otimes b_{\tau^{-1}(n)}).
        \end{align*}
        By the assumption that $su_{n,p-1} f(b_1 \otimes \cdots \otimes b_n) = 0$, we see that (\ref{shuffle-sum-of-df-3-1}) vanishes for every $p \geq 2$. Note that (\ref{shuffle-sum-of-df-3-1}) does not contain any parts with $p=1$. Therefore, we conclude that $(\ref{shuffle-sum-of-df-3-1}) = 0$. 
        \item We next analyze (\ref{shuffle-sum-of-df-3-4}). For a fixed $i$ between 1 and $n$, the increasing sequence corresponding to $\sigma$ looks like
        \begin{gather*}
            1\leq \sigma(1) < \cdots < \sigma(p) \leq n+1, \\
            1 \leq \sigma(p+1) < \cdots < i < i+1<\cdots < \sigma(n+1) \leq n+1.
        \end{gather*}
        We may now use a similar procedure as in the analysis of (\ref{shuffle-sum-of-df-3-1}), tracking the contracted term $a_qa_{q+1}$. In this case, we use the assumption that $su_{n,p}f(b_1\otimes\cdots\otimes b_n)=0$. Note that (\ref{shuffle-sum-of-df-3-4}) does not contain any parts with $p=n+1$. Thus, we conclude that $(\ref{shuffle-sum-of-df-3-4})=0$.
        \item Finally, we analyze $(\ref{shuffle-sum-of-df-3-2})+(\ref{shuffle-sum-of-df-3-3})$. Consider a shuffle $\sigma$ appearing in (\ref{shuffle-sum-of-df-3-2}). Then for a fixed $i$ between 1 and $n$,  the increasing sequence corresponding to $\sigma$ looks like
        \begin{gather*}
            1\leq \sigma(1) < \cdots <  i < \cdots < \sigma(p) \leq n+1, \\
            1 \leq \sigma(p+1) < \cdots < i+1 < \cdots < \sigma(n+1) \leq n+1.
        \end{gather*}
        Now we define the permutation $\tau\in\Sym(1,...,n+1)$ by
        $$\tau(k) = \begin{cases}
                \sigma(i) & \text{ if }k=i+1\\
                \sigma(i+1) & \text{ if }k=i\\
                \sigma(k) & \text{ otherwise }
            \end{cases}$$
        Then $\tau$ is also a a shuffle in $J_p(1,...,n+1)$, with increasing sequence
        \begin{gather*}
            1\leq \tau(1) < \cdots <  i+1 < \cdots < \tau(p) \leq n+1, \\
            1 \leq \tau(p+1) < \cdots < i < \cdots < \tau(n+1) \leq n+1.
        \end{gather*}
        In particular, $\tau$ is a shuffle appearing in (\ref{shuffle-sum-of-df-3-3}). Furthermore, starting with some $\tau$ in (\ref{shuffle-sum-of-df-3-3}) we can build the corresponding $\sigma$ in (\ref{shuffle-sum-of-df-3-2}) with a symmetrical construction. In this way, terms in (\ref{shuffle-sum-of-df-3-2}) and (\ref{shuffle-sum-of-df-3-3}) can be paired together in a bijective way. Consider a fixed term in (\ref{shuffle-sum-of-df-3-2})
        \begin{equation}
            (-1)^\sigma(-1)^{(\sigma\iv)^o}f\left(a_{\sigma^{-1}(1)}\otimes \dots\otimes a_{\sigma\iv(i)}\cdot a_{\sigma\iv(i+1)}\otimes\dots a_{\sigma^{-1}(n+1)}\right)\label{sh-3-2-term}
        \end{equation}
        and its corresponding term in (\ref{shuffle-sum-of-df-3-3})
        \begin{align}
            &(-1)^i(-1)^\tau(-1)^{(\tau\iv)^o}f\left(a_{\tau^{-1}(1)}\otimes \dots\otimes a_{\tau\iv(i)}\cdot a_{\tau\iv(i+1)}\otimes\dots a_{\tau^{-1}(n+1)}\right)\nonumber\\
            =&(-1)^i(-1)^\tau(-1)^{(\tau\iv)^o}f\left(a_{\sigma^{-1}(1)}\otimes \dots\otimes a_{\sigma\iv(i+1)}\cdot a_{\sigma\iv(i)}\otimes\dots a_{\sigma^{-1}(n+1)}\right)\nonumber\\
            =&(-1)^i(-1)^\tau(-1)^{(\tau\iv)^o}(-1)^{|a_{\sigma(i)}||a_{\sigma(i+1)}|}f\left(a_{\sigma^{-1}(1)}\otimes \dots\otimes a_{\sigma\iv(i)}\cdot a_{\sigma\iv(i+1)}\otimes\dots a_{\sigma^{-1}(n+1)}\right),\label{sh-3-3-term}
        \end{align}
        where the supercommutativity of the algebra is used in (\ref{sh-3-3-term}). By construction
        $$\tau=\sigma(i,i+1),$$
        so $(-1)^{\sigma}\cdot (-1)=(-1)^\tau$. Now we analyze $(\tau\iv)^o$. Let $1\leq \alpha_1 < \cdots < \alpha_k \leq n+1$ be the odd indices in the sequence of $a_1, ..., a_{n+1}$, $1\leq \beta_1 < \cdots < \beta_k \leq n+1$ be the odd indices in the sequence of $a_{\sigma^{-1}(1)}, ..., a_{\sigma^{-1}(n+1)}$.
        \begin{itemize}
            \item If $a_{\sigma\iv(i)}$ and $a_{\sigma\iv(i+1)}$ are both even, then neither appear in the odd indices $\beta_1,\cdots,\beta_k$, so $(-1)^{(\sigma\iv)^o}=(-1)^{(\tau\iv)^o)}$. Further, $(-1)^{|a_{\sigma(i)}||a_{\sigma(i+1)}|}=1$.
            \item If one of $a_{\sigma\iv(i)}$ and $a_{\sigma\iv(i+1)}$ are odd, then only one of the two appear in the odd indices $\beta_1,\cdots,\beta_k$. Because the only difference between $\sigma$ and $\tau$ is the transposition interchanging $i$ and $i+1$, they will act identically on the odd indices, so once again $(-1)^{(\sigma\iv)^o}=(-1)^{(\tau\iv)^o)}$. Again, $(-1)^{|a_{\sigma(i)}||a_{\sigma(i+1)}|}=1$.
            \item If both of $a_{\sigma\iv(i)}$ and $a_{\sigma\iv(i+1)}$ are odd, then $\sigma^o$ only differs from $\tau^o$ by a transposition, so $(-1)^{(\sigma\iv)^o}=(-1)^{(\tau\iv)^o)+1}$. In this case, $(-1)^{|a_{\sigma(i)}||a_{\sigma(i+1)}|}=-1$.
        \end{itemize}
        All these three cases show that 
        $$(-1)^{(\sigma\iv)^o}=(-1)^{(\tau\iv)^o}(-1)^{|a_{\sigma(i)}||a_{\sigma(i+1)}|},$$
        so (\ref{sh-3-2-term}) and (\ref{sh-3-3-term}) only differ by a sign. Therefore $(\ref{sh-3-2-term})+(\ref{sh-3-3-term})=0$. Note that no terms with $p=1$ or $p=n+1$ appear in  (\ref{shuffle-sum-of-df-3-2}) or  (\ref{shuffle-sum-of-df-3-3}). Thus, each of the terms of (\ref{shuffle-sum-of-df-3-2}) cancel with a corresponding term of (\ref{shuffle-sum-of-df-3-3}), so their sum vanishes. 
    \end{enumerate}
    We have now shown that $(\ref{shuffle-sum-of-df-3})=0$, so $(\ref{shuffle-sum-of-df-1})+(\ref{shuffle-sum-of-df-2})$ and (\ref{shuffle-sum-of-df-3}) all vanish, proving that
    $\pd f\in C^{n+1}(A,M).$
\end{proof}

\begin{defn}
    With Theorem \ref{superHarrisonthm}, we see that $C^n(A, M)$ forms a cochain complex, called the \textit{super Harrison cochain complex}. Let $Z^n(A, M) = \ker(\pd) \subset C^n(A, M)$ and $B^n(A, M) = \im (\pd) \subset C^n(A, M)$. We define the \textit{super Harrison cohomology} by 
    $$H^n(A, M) = Z^n(A, M)/B^n(A, M).$$
\end{defn}

\begin{rema}
    With an argument similar to that for Harrison cohomology, we may show that $H^2(A, M)$ describes all the equivalence classes of square-zero extensions of $A$ by $M$. We may also show that $H^2(A, A)$ describes all the equivalence classes of first-order deformations of $A$ as supercommutative algebras. We shall, however, not elaborate the details for supercommutative algebra, but leave the space of the paper for the discussion of vertex superalgebras later. 
\end{rema}

\section{Cohomology theory for vertex superalgebras}

\subsection{Vertex superalgebras and their modules}
\begin{defn}
    A {\it grading-restricted vertex superalgebra} is a $\frac{\Z}{2}\times \Z_2$-graded $\C$-vector space $$V=\coprod_{n\in \frac{\Z}{2}, \alpha\in \Z_2} V_{(n)}^\alpha=\coprod_{n\in \Z/2}V_{(n)}^0\oplus \coprod_{n\in \Z/2}V_{(n)}^1 = V^0 \oplus V^1,$$ equipped with a {\it vertex operator map}
    \begin{align*}
        Y:V &\to \Hom_\C{(V,V(\!(x)\!))},\\
        u &\mapsto Y(u,x) = \sum_{n\in\Z}u_n x^{-n-1}
    \end{align*}
    and a vector $\vac\in V_{(0)}^0$ called the {\it vacuum}, satisfying the following axioms:
    \begin{enumerate}
        \item {\it $\Z/2$-grading conditions:} Let $V_{(n)} = V_{(n)}^0 \oplus V_{(n)}^1$ for each $n\in \Z/2$. Then $\dim V_{(n)}<\infty$ for every $n\in \Z/2$, and $V_{(n)} = 0$ if $n$ is sufficiently negative. Let $\db_V:V\to V$ be the $\Z/2$-grading operator, i.e., $\db_V v=nv$ for $v\in V_{(n)}$. Then for every $v\in V$, the following \textit{$\db$-commutator formula} holds:
        $$[\db_V,Y(v,x)]=\frac{d}{dx}Y(v,x)+Y(\db_V v,x).$$
        \item {\it $\Z_2$-grading condition:} For $i,j\in \Z_2, u\in V^i, v \in V^j$, $Y(u, x)v \in V^{i+j}[[x, x^{-1}]]$. We shall say $u, v$ are $\Z_2$-homogeneous. We shall use the notation $|u| = i, |v| = j$ and call them the \textit{parity} of $u$ and $v$, respectively. 
        \item {\it Identity and creation property:} For $v\in V$, $Y(\vac,x)v = v, Y(v,x)\vac\in v+xV[[x]].$
        \item {\it $\Db$-derivative property and $\Db$-commutator formula:} Let $\Db_V:V\to V$ be the linear operator given by
        $$\Db_V v=\Res_xx^{-2}Y(v,x)\vac=v_{-2}\vac$$
        for $v\in V$. Then for $u\in V,$
        $$\frac{d}{dx}Y(u,x)=Y(\Db_V u,x)=[\Db_V,Y(u,x)].$$
        \item {\it Duality:} Let $u_1, u_2\in V$ be $\Z_2$-homogeneous. Then for every element $v\in V$ and $v'\in V' = \coprod_{n\in \Z/2} V_n^*$ (the restricted dual of $V$), the following three series
        \begin{gather*}
             \eval{Y(u_1,z_1)Y(u_2,z_2)}\\
             (-1)^{|u_1||u_2|}\eval{Y(u_2,z_2)Y(u_1,z_1)}\\
             \eval{Y(Y(u_1,z_1-z_2)u_2,z_2)}
        \end{gather*}
        converge absolutely in the regions $|z_1|>|z_2|>0$, $|z_2|>|z_1|>0$, $|z_2|>|z_1-z_2|>0$, respectively, to a common rational function in $z_1$ and $z_2$ with the only possible poles at $z_1=0, z_2=0$ and $z_1=z_2$. 
        
    \end{enumerate}
    We denote a grading-restricted vertex (super)algebra by $(V,Y,\vac)$, or just $V$. 
\end{defn}

\begin{rema}
With the presence of the other axioms, the duality axiom is equivalent to Jacobi identity (see \cite{LL}), which states that for $\Z_2$-homogeneous elements $u, v\in V$.
        \begin{align*}
        x_0\iv\delta\left(\frac{x_1-x_2}{x_0}\right) Y(u,x_1)Y(v,x_2)-(-1)&^{|u||v|}x_0\iv\delta\left(\frac{-x_2+x_1}{x_0}\right) Y(v,x_2)Y(u,x_1)\\ =  x_2\iv\delta\left(\frac{x_1-x_0}{x_2}\right)&Y(Y(u,x_0)v,x_2),
        \end{align*}
        where $\delta(x)=\sum_{n\in \Z}x^n.$ Applying the change of variables $(u,v;x_0,x_1,x_2)\leftrightarrow(v,u;-x_0,x_2,x_1)$ to the Jacobi identity and using the formal Taylor theorem, one can obtain the super version of \textit{skew-symmetry}
        $$Y(u,x)v=(-1)^{|u||v|}e^{x\Db_V}Y(v,-x)u.$$
\end{rema}

\begin{defn}
    Let $V$ be a grading-restricted vertex superalgebra. A {\it grading-restricted $V$-supermodule} is a $\C\times\Z_2$-graded vector space
    $$W=\coprod_{n\in\C,\alpha\in\Z_2}W^\alpha_{(n)}=\coprod_{n\in\C} W_{(n)}^0\oplus \coprod_{n\in \C} W_{(n)}^1=W^0\oplus W^1,$$
    equipped with a vertex operator map
    \begin{align*}
        Y_W:V &\to \Hom_\C{(W,W(\!(x)\!))},\\
        u &\mapsto Y_W(u,x) = \sum_{n\in\Z}(Y_W)_n(u)x^{-n-1}
    \end{align*}
    and linear operators $\db_W$ and $\Db_W$ on $W$ satisfying the following axioms:
    \begin{enumerate}
        \item {\it $\C$-grading conditions:} Let $W_{(n)} = W_{(n)}^0 \oplus W_{(n)}^1$ for each $n\in \C$. Then for $n\in\C$, $\dim{W_{(n)}}<\infty$, and $W_{(n)} = 0$ if the real part of $n$ is sufficiently negative. We require that $\d_W w=nw$ for $w\in W_{(n)}$, and for every $v\in V$, the following \textit{$\db$-commutator formula} holds:
        $$[\db_W,Y_W(v,x)]=\frac{d}{dx}Y_W(v,x)+Y_W(\db_Vv,x)$$
        \item {\it $\Z_2$-grading condition:} For $i,j\in \Z_2, v\in V^i, w \in V^j$, $Y_W(v, x)w \in W^{i+j}[[x, x^{-1}]]$. We likewise define \textit{$\Z_2$-homogeneous} and \textit{parity}, and use the notation $|v| = i, |w| = j$.
        \item {\it Identity property:} For $w\in W$, $Y(\vac, x)w = w.$
        \item {\it $\Db$-derivative property and $\Db$-commutator formula:} For $u\in V,$
        $$\frac{d}{dx}Y_W(u,x)=Y_W(\Db_V u,x)=[\Db_W,Y(u,x)].$$
        \item {\it Duality:} For $\Z_2$-homogeneous $u_1, u_2\in V$,  $w\in W$, and $w'\in W' = \coprod_{n\in \C} W_{(n)}^*$ (the restricted dual of $W$), the following three series
        \begin{gather*}
             \eval[w]{Y(u_1,z_1)Y(u_2,z_2)}\\
             (-1)^{|u_1||u_2|}\eval[w]{Y(u_2,z_2)Y(u_1,z_1)}\\
             \eval[w]{Y(Y(u_1,z_1-z_2)u_2,z_2)}
        \end{gather*}
        converge absolutely in the regions $|z_1|>|z_2|>0$, $|z_2|>|z_1|>0$, $|z_2|>|z_1-z_2|>0$, respectively, to a common rational function in $z_1$ and $z_2$ with the only possible poles at $z_1=0, z_2=0$ and $z_1=z_2$.
    \end{enumerate}  
    We denote a grading-restricted $V$-supermodule by $(W, Y_W, \db_W, \Db_W)$, or just $W$. In this paper, we shall also use the abbreviation {\it $V$-supermodules} since we will not consider more general supermodules. Note that $V$ is a supermodule over itself.
\end{defn}

\begin{rema}
    We can view the vertex operator map $Y_W$ in a $V$-supermodule $W$ as a left-action of $V$ on $W$. Motivated by skew-symmetry, we define the linear map $Y^W_{WV}$ analogous to a right-action:
    \begin{align*}
        Y^W_{WV}:W\otimes V &\to W[[x,x\iv]]\\
         w\otimes v &\mapsto Y^W_{WV}(w,x)v := (-1)^{|v||w|}e^{x\Db_W}Y_W(v,-x)w
    \end{align*}
for $w,v\in V$, $w,v$ both $\Z_2$-homogeneous. With this right action, a $V$-supermodule is automatically a bimodule if we view $V$ as a $\Z/2$-graded meromorphic open-string vertex algebra (cf. \cite{Q-Mod}, \cite{Q-Coh}, \cite{Q-Ext-1}, \cite{FQ-Fermion-1}, \cite{Q-Fermion-2}). 

\end{rema}

\subsection{$\overline{W}$-valued rational functions}

Recall the algebraic completion $\overline{W}$ of a $V$-supermodule $W$ is precisely the full dual of the restricted dual. More precisely, if $W = \coprod_{n\in \C} W_{(n)},$ then $$\overline{W} = (W')^* = \prod_{n\in \C} W_{(n)}. $$
\begin{defn}
    Consider the configuration space 
    $$F_n\C = \{(z_1, ..., z_n)\in \C^n: z_i \neq z_j, 1\leq i < j \leq n\}.$$
    A {\it $\overline{W}$-valued rational function} in $z_1,\dots,z_n$ with the only possible poles at $z_i=z_j$, $i\neq j$, is a map
    \begin{align*}
        f:\qquad F_n\C&\to \overline{W}\\
        (z_1,\dots,z_n)&\mapsto f(z_1,\dots,z_n)
    \end{align*}
    such that for any $w'\in W'$, 
    $$\lrgl{w',f(z_1,\dots,z_n)}$$
    is a rational function in $z_1\dots,z_n$. We use $\widetilde{W}_{z_1,\dots,z_n}$ with the only possible poles at $z_i=z_j$, $i\neq j$. We denote the space of such functions by $\widetilde{W}_{z_1,\dots,z_n}$.
\end{defn}

\begin{exam}
    Let $v_1, ..., v_n\in V$, $w'\in W'$ and $w\in W$ be a vacuum-like vector in $W$, i.e., $Y_W(v, x)w\in W[[x]]$. In this case, we know that the series 
    \begin{align}
        \langle w', Y_W(v_1, z_1) \cdots Y_W(v_n, z_n)w\rangle \label{series-product}
    \end{align}
    converges absolutely to a rational function with the only possible poles at $z_i = z_j, 1\leq i < j \leq n$ (see \cite{H-Coh} and \cite{Q-Coh} for details). If we denote the rational function by 
    \begin{align}
        R \bigg(\langle w', Y_W(u_1, z_1)\cdots Y_W(u_n, z_n) w\rangle \bigg), \label{R-product}
    \end{align}
    then (\ref{R-product}) is the analytic continuation of the series (\ref{series-product}) and thus makes sense whenever $z_i \neq z_j, 1\leq i < j \leq n.$ The linear map 
    $$w'\mapsto R \bigg(\langle w', Y_W(u_1, z_1)\cdots Y_W(u_n, z_n) w\rangle \bigg)$$
    defines an element in $\overline{W}$ for every $(z_1, ..., z_n)\in F_n\C$. If we denote this element in $\overline{W}$ by 
    \begin{align}
        E\bigg(Y_W(u_1, z_1)\cdots Y_W(u_n, z_n)w\bigg),\label{E-product}
    \end{align}
    then the map 
    $$(z_1, ..., z_n)\mapsto E\bigg(Y_W(u_1, z_1)\cdots Y_W(u_n, z_n)w\bigg)$$
    is a $\overline{W}$-valued rational function. We shall omit the reference of $(z_1, ..., z_n)\in F_n\C$ and directly say that (\ref{E-product}) is a $\overline{W}$-valued rational function. 
\end{exam}    
\begin{nota}
    Generally, for a series 
    \begin{align}
        \sum_{m_1, ..., m_n\in \Z} w_{m_1\cdots m_n}z_1^{-m_1-1}\cdots z_n^{-m_n-1}\label{V-series}
\end{align}
in $W[[z_1, z_1^{-1}, ..., z_n, z_n^{-1}]]$, such that for every $w'\in W'$, the complex series
\begin{align}
     \left\langle w', \sum_{m_1, ..., m_n\in \Z} , w_{m_1\cdots m_n} z_1^{-m_1-1}\cdots z_n^{-m_n-1}\right\rangle\label{V-series-pair-v'}
\end{align}
converges absolutely in a certain region to a rational function with the only possible poles at $z_i = z_j$ $(1\leq i < j \leq n)$, we may use the notation 
    \begin{align}
        R\left(\left\langle w', \sum_{m_1, ..., m_n\in \Z} w_{m_1\cdots m_n} z_1^{-m_1-1}\cdots z_n^{-m_n-1}\right\rangle\right) \label{V-series-pair-v'-sum}
    \end{align}
    for the sum of the complex series (\ref{V-series-pair-v'}), which makes sense everywhere on $F_n\C$. We will use the notation 
    \begin{align}
        E\left(\sum_{m_1, ..., m_n\in \Z} w_{m_1\cdots m_n} z_1^{-m_1-1}\cdots z_n^{-m_n-1}\right)\label{E-of-V-series}
    \end{align}
    to denote the corresponding $\overline{W}$-valued rational function, which may be viewed as the analytic continuation of the sum of the series $f(z_1, ..., z_n)$ with coefficients in $V$. 
\end{nota}
\begin{rema}
    It should be noted that (\ref{V-series-pair-v'-sum}) and (\ref{E-of-V-series}) make sense everywhere on $F_n\C$, while (\ref{V-series-pair-v'}) may converge in a much smaller region. For example, for $u_1, u_2\in V$ that are $\Z_2$-homogeneous, $w'\in W',$ and $w\in W$ be a vacuum-like vector. Then $$\langle w', Y(u_1, z_1)Y(u_2, z_2)w\rangle $$
    converges absolutely in the region $|z_1|>|z_2|$; the series 
    $$(-1)^{|u_1||u_2|}\langle w', Y(u_2, z_2)Y(u_1, z_1)w\rangle $$
    converges absolutely in the region $|z_2|>|z_1|$. Since the regions of convergence of these two series do not intersect, we cannot equate them. However, we may say that 
    $$R\bigg(\langle w', Y(u_1, z_1)Y(u_2, z_2)w\rangle\bigg) = (-1)^{|u_1||u_2|} R\bigg(\langle w', Y(u_2, z_2)Y(u_1, z_1)w\rangle \bigg),$$
    because these two series converge to a common rational function. We may also say that 
    $$E\bigg(Y(u_1, z_1)Y(u_2, z_2)w\bigg) = (-1)^{|u_1||u_2|} E\bigg(Y(u_2, z_2)Y(u_1, z_1)w\bigg),$$ 
    since these two series converge to the same $\overline{W}$-valued rational functions. The use of $\overline{W}$-valued rational function is crucial for the construction of cohomology theory of vertex algebras and superalgebras. 
\end{rema} 

\begin{nota}
    For convenience, we introduce the following notations:
    \begin{align}
        E_{W}^{(n)}(v_1\otimes \cdots \otimes v_n;w;z_1, ..., z_{n}) & = E\bigg(Y_W(u_1, z_1) \cdots Y_W(u_n, z_n)w\bigg), \label{EW-n}
    \end{align}
    
    (\ref{EW-n}) defines $\overline{W}$-valued rational functions, but not in $\widetilde{W}_{z_1, ..., z_{n}}$ unless $w$ is vacuum-like. In case $W = V$, we have
    \begin{align}
        E_{V}^{(n)}(v_1 \otimes \cdots \otimes v_n; \vac; z_1, ..., z_n) = E\bigg(Y(v_1, z_1)\cdots Y(v_n, z_n)\vac\bigg)\label{EV-n}
    \end{align}
    which defines a $\overline{V}$-valued rational function in $\widetilde{V}_{z_1, ..., z_n}$. 
\end{nota}

\begin{rema}
    A linear map $f$ with domain $W$ can be naturally extended to $\overline{W}$, provided that certain convergence condition holds. More precisely, given an element $\bar w \in \overline{W}$, we first apply the projection $P_n: \overline{W}\to W_{(n)}$ to obtain an element $P_n \bar{w}\in W$. Then $f(P_n\bar{w})$ makes sense. The extension is well defined if the series $\sum_{n\in \C} f(P_n \bar{w})$ converges to a well-defined element in $\overline{W}$, i.e., for each $r\in \C$, only finitely many $n$ makes $P_r f(P_n\bar{w})$ nonzero. 
\end{rema}

\begin{rema}
    The product of vertex operators may also be understood via this procedure. For example, fix $u_1, u_2\in V, w\in W$. Then for a fixed $z_2\in \C^\times$, $Y_W(u_2, z_2)w\in \overline{W}$. To act $Y_W(u_1,z_1)$ on this element in $\overline{W}$, we follow the same procedure and set   
    $$Y_W(u_1, z_1)Y_W(u_2, z_2)w = \sum_{n\in \C} Y_W(u_1, z_1) P_n Y_W(u_2, z_2)w.$$
    From the duality axiom, the series converges absolutely if $|z_1|>|z_2|>0$ defines an element in $\overline{W}$. By an analytic continuation process, we obtain a $\overline{W}$-valued rational function, which is an element in $\overline{W}$ that makes sense everywhere on $F_n\C$. 
\end{rema}

\subsection{Cochain construction I: $\Db$-derivative property and $\db$-conjugation property}

We shall build a cochain out of linear maps $\Phi:V^{\otimes n}\to\widetilde{W}_{z_1,\dots,z_n}$. For convenience, we modify the notation as 
$$\Phi(v_1\otimes \cdots \otimes v_n; z_1, ..., z_n)$$
instead of the notation in \cite{H-Coh}, which is more conceptual but involves too many parentheses. In order to maintain the geometric interpretation, we would like them to be compatible with the operators $\db$ and $\Db$ on $V$ and $W$. 
\begin{defn}
    For $n\in\Z_+$, a linear map $\Phi:V^{\otimes n}\to\widetilde{W}_{z_1,\dots,z_n}$ is said to have the {\it $\Db$-derivative property} if
    \begin{align*}
        & \frac{\pd}{\pd z_i}\lrgl{w',\Phi(v_1\otimes\cdots\otimes v_n;z_1,\dots,z_n)}\\
        = \ & \lrgl{w',\Phi(v_1\otimes\cdots\otimes v_{i-1}\otimes\Db_V v_i\otimes v_{i+1}\otimes\cdots\otimes v_n; z_1,\dots,z_n)}
    \end{align*}
    for $i=1,\dots,n$, $v_1,\dots,v_n\in V$, and $w'\in W'$; and
    \begin{align*}
        & \left(\frac{\pd}{\pd z_1}+\cdots+\frac{\pd}{\pd z_n}\right)\lrgl{w',\Phi(v_1\otimes\cdots\otimes v_n;z_1,\dots,z_n)}\\
        = \ &\lrgl{w',\Db_W \Phi(v_1\otimes\cdots\otimes v_n;z_1,\dots,z_n)}.
    \end{align*}
\end{defn}
\begin{defn}
    For $n\in \Z_+$, a linear map $\Phi:V^{\otimes n}\to\widetilde{W}_{z_1,\dots,z_n}$ is said to have the {\it $\db$-conjugation property} if for $v_1,\dots,v_n\in V$, $w\in W$, $w'\in W'$, $(z_1,\dots,z_n)\in F_n\C$, and $z\in \C^\times$ such that $(zz_1,\dots,zz_n)\in F_n\C$, 
    \begin{align*}
        & \lrgl{w',z^{\db_W}\Phi(v_1\otimes\cdots\otimes v_n;z_1,\dots,z_n)}\\
        = \ & \lrgl{w',\Phi(z^{\db_V}v_1\otimes\cdots\otimes z^{\db_V}v_n; zz_1,\dots,zz_n)}.
    \end{align*}
\end{defn}

The following prosition follows directly from the $\Db$-derivative property and the formal Taylor theorem. For a treatment of the formal Taylor theorem, see \cite{LL}.
\begin{prop}
    Let $\Phi:V^{\otimes n}\to\widetilde{W}_{z_1,\dots,z_n}$ be a linear map with the $\Db$-derivative property. Then for $v_1,\dots,v_n\in V$, $w'\in W'$, $(z_1,\dots,z_n)\in F_n\C$, $z\in \C$, such that $(z_1+z,\dots,z_n+z)\in F_n\C$,
    \begin{align*}
        \lrgl{w'&,e^{z\Db_W} \Phi(v_1\otimes\cdots\otimes v_n;z_1,\dots,z_n)}\\
        &=\lrgl{w',\Phi(v_1\otimes\cdots\otimes v_n;z_1+z,\dots,z_n+z)},
    \end{align*}
    and for $(z_1,\dots,z_n)\in F_n\C$, $z\in\C$, such that $(z_1,\dots,z_{i-1},z_i+z,z_{i+1},\dots,z_n)\in F_n\C$, the power series of
    \begin{equation}
        \lrgl{w',\Phi(v_1\otimes\cdots\otimes v_{i-1}\otimes e^{z\Db_V}v_{i}\otimes v_{i+1}\otimes\cdots\otimes v_n;z_1,\dots,z_n)} \label{Taylor1}
    \end{equation}
    in $z$ is equal to the power series expansion of
    \begin{equation}
        \lrgl{w',\Phi(v_1\otimes\cdots\otimes v_n;z_1,\dots,z_{i-1},z_i+z,z_{i+1},\dots,z_n)}\label{Taylor2}
    \end{equation}
    in $z$. In particular, the power series (\ref{Taylor1}) in $z$ is absolutely convergent to (\ref{Taylor2}) in the disk $|z|<\min_{i\neq j}\{|z_i-z_j|\}.$
\end{prop}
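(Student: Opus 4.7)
The plan is to derive both statements as consequences of iterating the $\Db$-derivative property and then invoking the formal Taylor theorem in one or several variables, essentially repackaging the computation $e^{zD}f(z_0)=f(z_0+z)$.

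For the first identity, I would begin by applying the second half of the $\Db$-derivative property repeatedly. By induction on $k$, one obtains
\begin{align*}
\langle w',\Db_W^k\,\Phi(v_1\otimes\cdots\otimes v_n;z_1,\dots,z_n)\rangle = \left(\frac{\partial}{\partial z_1}+\cdots+\frac{\partial}{\partial z_n}\right)^{\!k}\langle w',\Phi(v_1\otimes\cdots\otimes v_n;z_1,\dots,z_n)\rangle.
\end{align*}
Multiplying by $z^{k}/k!$ and summing over $k\geq 0$ gives $\langle w',e^{z\Db_W}\Phi(v_1\otimes\cdots\otimes v_n;z_1,\dots,z_n)\rangle$ on the left, and by the multivariable formal Taylor theorem the right-hand side is the expansion of $\langle w',\Phi(v_1\otimes\cdots\otimes v_n;z_1+z,\dots,z_n+z)\rangle$ around $(z_1,\dots,z_n)$; since the latter is a rational function with poles only on the diagonals $z_i+z=z_j+z$, i.e.\ $z_i=z_j$, the translation does not introduce new singularities, so the series actually converges to that rational function on the whole region where $(z_1+z,\dots,z_n+z)\in F_n\C$.

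For the second identity the argument is parallel but uses the first half of the $\Db$-derivative property in a single slot: by induction on $k$,
\begin{align*}
\langle w',\Phi(v_1\otimes\cdots\otimes \Db_V^k v_i\otimes\cdots\otimes v_n;z_1,\dots,z_n)\rangle = \frac{\partial^k}{\partial z_i^{\,k}}\langle w',\Phi(v_1\otimes\cdots\otimes v_n;z_1,\dots,z_n)\rangle.
\end{align*}
Multiplying by $z^k/k!$ and summing, the left-hand side becomes (\ref{Taylor1}) while the right-hand side is the Taylor expansion in $z_i$ of (\ref{Taylor2}); this is exactly the equality of formal power series claimed. The convergence statement then follows from the fact that $\langle w',\Phi(v_1\otimes\cdots\otimes v_n;z_1,\dots,z_n)\rangle$ is a rational function whose only singularities in the variable $z_i$ (with the other variables held fixed) are simple or higher-order poles at $z_i=z_j$ for $j\neq i$, so the Taylor series in $z$ about the point $z_i$ converges in the disk of radius $\min_{j\neq i}|z_i-z_j|$; in particular it converges in the smaller disk $|z|<\min_{i\neq j}|z_i-z_j|$ stated in the proposition.

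The only real care needed is bookkeeping: making sure the induction step is justified by applying the $\Db$-derivative property to $\Phi$ itself (its output still has the same $\Db$-derivative property after replacing $v_i$ with $\Db_V v_i$, because the defining identity is linear in $\Phi$), and observing that term-by-term application of $e^{z\Db_W}$ on $\overline{W}$ is well defined by component-wise grading arguments. I expect no substantive obstacle beyond this: the content is essentially the formal Taylor theorem, and the analytic convergence is automatic from rationality. The slightly awkward point — and what keeps this from being a one-line remark — is distinguishing the formal power series equality of (\ref{Taylor1}) and (\ref{Taylor2}) from the analytic identity, which is why the proposition is phrased in two parts.
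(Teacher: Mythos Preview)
Your proposal is correct and takes essentially the same approach as the paper: the paper simply states that the proposition ``follows directly from the $\Db$-derivative property and the formal Taylor theorem'' with a reference to \cite{LL}, and your argument spells out exactly that computation. Your added care about convergence radii and the bookkeeping in the induction is sound and fills in the details the paper omits.
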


\subsection{Cochain construction II: composable condition} To make sense of the coboundary operator, we need to impose the following condition on the linear maps $V^{\otimes n}\to \widetilde{W}_{z_1,...,z_n}.$
\begin{defn}
    Let $\Phi:V^{\otimes n}\to \widetilde{W}_{z_1,\dots,z_n}$ be a linear map. For $m\in \N$, $\Phi$ is said to be {\it composable with $m$ vertex operators} if the following condition is satisfied:
    Fix arbitrary $l_0\in\N$, $l_1,\dots,l_n\in\Z_+$ such that $l_0+l_1+\dots+l_n=m+n,$ $v_1,\dots,v_{m+n}\in V$ and $w'\in W'.$ Set
    \begin{align*}
        \Psi_i&= E_V^{(l_i)}(v_{l_0+\cdots + l_{i-1} +1} \otimes \cdots \otimes v_{l_0+\cdots + l_{i-1}+l_i};\vac; z_{l_0+\cdots +l_{i-1} + 1}-\zeta_i, ..., z_{l_0+\cdots +l_{i-1} + l_i}-\zeta_i)\\
        & =E\bigg(Y(v_{l_0+\cdots+l_{i-1}+1},z_{l_0+\cdots+l_{i-1}+1}-\zeta_i)\cdots Y(v_{l_0+\cdots+l_{i-1}+l_i},z_{l_0+\cdots+l_{i-1}+l_i}-\zeta_i)\vac\bigg)
    \end{align*}
    for $i=1,\dots,n$. Then there exists positive integers $N(v_i,v_j)$ depending only on $v_i$ and $v_j$ for $i,j=1,\dots,m+n,$ $i\neq j$, such that the series
    \begin{align}
        \sum_{r_0\in\C,r_1,\dots,r_n\in\Z}\lrgl{w',E(Y_W(v_1,z_1)\cdots Y_W(v_{l_0},z_{l_0}) P_{r_0}(\Phi(P_{r_1}\Psi_1\otimes\cdots\otimes P_{r_n}\Psi_n;\zeta_1,\dots,\zeta_n)))}\label{composable-1}
    \end{align}
    is absolutely convergent when $z_i\neq z_j$, $i\neq j$, $|z_i|>|\zeta_j|,|z_k|$ for $i=1,\dots,l_0$, $j=1,\dots,n$, $k=l_0+1,\dots,m+n$,
    $$|z_{l_0+\cdots+l_{i-1+p}}-\zeta_i|+|z_{l_0+\cdots+l_{j-1+q}}-\zeta_j|<|\zeta_i-\zeta_j|$$
    for $i,j=1,\dots,k$, $i\neq j$ and for $p=1,\dots,l_i$, and $q=1,\dots,l_j,$. Moreover, the sum can be analytically extended to a rational function in $z_1,\dots,z_{m+n}$ independent of $\zeta_1,\dots,\zeta_n$, with the only possible poles at $z_i=z_j$ of order less than or equal to $N(v_i,v_j)$ for $i,j=1,\dots,k$, $i\neq j$. \label{m-composable}
\end{defn}

\begin{nota}
    For $n\in\Z_+$, let $\widehat{C}^n_0(V,W)$ denote the subspace of $\Hom_\C(V^{\otimes n},\widetilde{W}_{z_1,\dots,z_n})$ satisfying both the $\Db$-derivative and $\db$-conjugation properties, let $\widehat{C}^n_m(V,W)$ denote the subspace of $\widehat{C}^n_0(V,W)$ that is further composable with $m$ vertex operators, and let $\widehat{C}^0_m(V,W)=W$. Then we have
    $$\widehat{C}^n_m(V,W)\sseq \widehat{C}^n_{m-1}(V,W)$$
for $m\in\Z_+$. Additionally let
$$\widehat{C}^n_\infty(V,W)=\bigcap_{m\in\N}\widehat{C}^n_m(V,W).$$
\end{nota}

\begin{exam}
    For $w\in W$ satisfying $Y_W(v,x)w\in W[[x]]$, the $\overline{W}$-valued rational function $E^{(n)}_{W,w}(v_1\otimes\cdots\otimes v_n)$ for $v_1,\dots,v_n\in V$ give a linear map
    \begin{align*}
        E^{(n)}_{W;w}:\qquad \qquad V^{\otimes n}&\to \widetilde{W}_{z_1,\dots,z_n}\\
        v_1\otimes\cdots\otimes v_n&\mapsto E^{(n)}_W(v_1\otimes\cdots\otimes v_n;w;z_1, ..., z_n).
    \end{align*}
    By the properties of $Y_W$-operators, this map has the $\Db$-derivative and $\db$-conjugation properties. Since products and iterates of vertex operators converge, $E_{W, w}^{(n)}$ is composable with any number of vertex operators. In particular, $E^{(n)}_{V}$ is such a map. So the space $\widehat{C}_{\infty}^n(V, W)$ and $\widehat{C}_{m}^n(V, W)$ are nonempty for every $m, n\in \Z_+$. \label{vertexExam}
\end{exam}

\begin{nota}
    For convenience, we define the linear maps 
    \begin{align*}
    \Phi\circ&(E^{(l_1)}_V\otimes\cdots\otimes E^{(l_n)}_V):V^{\otimes(m+n)}\to \widetilde{W}_{z_1,\dots,z_{m+n}},\\
    &E^{(m)}_W\circ_{m+1}\Phi:V^{\otimes(m+n)}\to \widetilde{W}_{z_1,\dots,z_{m+n-1}},\\
    &E^{W;(m)}_{WV}\circ_{0}\Phi:V^{\otimes(m+n)}\to \widetilde{W}_{z_1,\dots,z_{m+n-1}}
\end{align*}
by
\begin{align*}
     & (\Phi\circ(E^{(l_1)}_V\otimes\cdots\otimes E^{(l_n)}_V))(v_1\otimes\cdots\otimes v_{m+n}; z_1, ..., z_{m+n})\\
     = \ & E\bigg(\sum_{r_1,..., r_n\in \Z}\Phi(P_{r_1} E^{(l_1)}_V(v_1\otimes\cdots\otimes v_{l_1};\vac;z_1-\zeta_1, ..., z_{l_1}-\zeta_1)\\
     & \hspace{6.5em} \otimes\cdots\otimes P_{r_n}E^{(l_n)}_V(v_{l_1+\cdots+l_{n-1}+1}\otimes\cdots\otimes v_{l_1+\cdots+l_n};\vac;z_{l_1+\cdots+l_{n-1}+1}-\zeta_n, ..., z_{l_1+\cdots+l_n}-\zeta_n);\\
     & \hspace{7em} \zeta_1, ..., \zeta_n)\bigg),\\
    & (E^{(m)}_W\circ_{m+1}\Phi)(v_1\otimes\cdots\otimes v_{m+n};z_1, ..., z_{m+n})\\
    = \ & E\bigg(\sum_{r\in \C}E^{(m)}_W(v_1\otimes\cdots\otimes v_m;P_r\Phi(v_{m+1}\otimes\cdots\otimes v_{m+n};z_{m+1}, ..., z_{m+n});z_1, ..., z_m)\bigg),\\
    & (E^{W;(m)}_{WV}\circ_{0}\Phi)(v_1\otimes\cdots\otimes v_{m+n};z_1, ..., z_{m+n})\\
    =\ & (-1)^{\sum\limits_{i=1}^n \sum\limits_{j=1}^m |v_i||v_{n+j}|}E\left(\sum_{r\in \C}E^{(m)}_W(v_{n+1}\otimes \cdots \otimes v_{n+m}; P_r \Phi(v_1\otimes \cdots \otimes v_n; z_1, ..., z_n); z_{n+1}, ..., z_{n+m})\right). 
\end{align*}
respectively. In addition, if $l_1=\cdots=l_{i-1}=l_{i+1}=\cdots=l_n=1$ and $l_i=k$ for some $i$, $1\leq i\leq n$ and $k\in\Z_+$, we denote
$$\Phi\circ_iE^{(k)}_V:=\Phi\circ(E^{(l_1)}_V\otimes\cdots\otimes E^{(l_n)}_V).$$
\end{nota}

To describe the square-zero extensions and first-order deformations, we consider the following $1/2$-composable condition. 

\begin{defn}
We say $\Phi\in\widehat{C}^2_0(V,W)$ satisfies the $1/2$-composable condition, if for $v_1,v_2,v_3\in V$, and $w'\in W'$, the series
\begin{align*}
    \sum_{r\in\C}&\left(\lrgl{w',E^{(1)}_W(v_1;P_r(\Phi(v_2\otimes v_3;z_2-\zeta,z_3-\zeta);z_1,\zeta)}\right.\\
    &\left.+\lrgl{w',\Phi(v_1\otimes P_r(E^{(2)}_V(v_2\otimes v_3;\vac;z_2-\zeta,z_3-\zeta));z_1,\zeta)}\right)
\end{align*}
and
\begin{align*}
    \sum_{r\in\C}&\left(\lrgl{w',\Phi(P_r(E^{(2)}_V(v_1\otimes v_2;\vac;z_1-\zeta,z_2-\zeta))\otimes v_3;\zeta,z_3)}\right.\\
    &\left.+\lrgl{w',E^{W,(1)}_{WV}(P_r(\Phi(v_1\otimes v_2;z_1-\zeta,z_2-\zeta));v_3)(\zeta;z_3)}\right)
\end{align*}
are absolutely convergent in the regions $$|z_1-\zeta|>|z_2-\zeta|,|z_3-\zeta|>0,$$ and $$|\zeta-z_3|>|z_1-\zeta|,|z_2-\zeta|>0,$$ respectively, and can be analytically extended to rational functions in $z_1$ and $z_2$ with the only possible poles at $z_i = z_j, 1\leq i < j \leq 3$. 
Let $\widehat{C}^2_\frac{1}2{(V,W)}$ be the subspace of $1/2$-composable linear maps in $\widehat{C}_0^2(V, W)$. Clearly, $\widehat{C}^2_m(V,W)\sseq\widehat{C}^2_{\frac{1}{2}}(V,W)$ for every $m\in\Z_+$. 
\end{defn}

\subsection{Cochain construction III: shuffle condition} For $V$-supermodules, we will further restrict to particular linear maps that are parity-preserving and have zero super shuffle sums. 

\begin{defn}
    For $n\in \Z_+$, a linear map $\Phi: V^{\otimes n} \to \widetilde{W}_{z_1,..., z_n}$ is \textit{parity-preserving} if for every $v_1, ..., v_n \in V$ that are $\Z_2$-homogeneous, and $(z_1, ..., z_n)\in F_n\C$, 
    $$\Phi(v_1\otimes \cdots \otimes v_n; z_1, ..., z_n)\in \overline{W^{|v_1|+\cdots + |v_n|}}.$$
    Equivalently, for every $r\in \C$, 
    $$P_r \Phi(v_1\otimes \cdots \otimes v_n; z_1, ..., z_n) \in W^{|v_1|+\cdots + |v_n|}_{(r)}$$
\end{defn}

\begin{defn}
Let $f\in\widetilde{W}_{z_1,\dots,z_n}$, and let $\sigma\in\Sym_n$. We define a left action of $\Sym_n$ on $\widetilde{W}_{z_1,\dots,z_n}$ by
$$(\sigma(f))(z_1,\dots,z_n)=f(z_{\sigma(1)},\dots,z_{\sigma_n}).$$
Now let $\Phi\in\Hom_\C(V^{\otimes n},\widetilde{W}_{z_1,\dots,z_n})$, and let $v_1,\dots,v_n\in V$. We extend the action of $\Sym_n$ to $\Hom_\C(V^{\otimes n},\widetilde{W}_{z_1,\dots,z_n})$ by
$$(\sigma(\Phi))(v_1\otimes\cdots\otimes v_n)=\sigma(\Phi(v_{\sigma(1)}\otimes\cdots\otimes v_{\sigma(n)})).$$ 
More precisely, 
\begin{align*}
    (\sigma(\Phi))(v_1\otimes\cdots \otimes v_n;z_1, ..., z_n)
    = \Phi(v_{\sigma(1)} \otimes \cdots v_{\sigma(n)};z_{\sigma(1)}, ..., z_{\sigma(n)}).
\end{align*}
We shall use the notation 
$$\begin{pmatrix}
    1 & 2 & \cdots & n-1 &  n\\
    i_1 & i_2 & \cdots & i_{n-1} & i_n
\end{pmatrix}$$
for the permutation sending $j$ to $i_j$ for $j=1,\dots,n.$ 
\end{defn}

\begin{defn}
    Recall the set of shuffles $J_p(1,\dots,n)$ from Chapter 2. Define the set $J_{n;p}\iv=\{\sigma\iv\;|\;\sigma\in J_p(1,\dots,n)\}$. For $m,n\in\N$ or $m=\frac{1}{2}$, $n=2$, we define the subspace $C^n_m(V,W)\sseq\widehat{C}^n_m(V,W)$ consisting of parity-preserving linear maps $\Phi: V^{\otimes n}\to \widetilde{W}_{z_1, ..., z_n}$, such that
    $$\sum_{\sigma\in J\iv_{n;p}}(-1)^\sigma(-1)^{\sigma^o}\sigma(\Phi(v_{\sigma(1)}\otimes\cdots\otimes v_{\sigma(n)}))=0$$
    for $1\leq p\leq n-1.$
    We also define 
    $$C^n_\infty(V,W)=\bigcap_{m\in\N}C^n_m(V,W).$$
\end{defn}

\subsection{Coface maps}
To show that $\widehat{C}^n_m(V,W)$ and $C^n_m(V, W)$ form a cochain with the appropriate coboundary map, we apply simplicial methods. We define the following coface maps analogously to the Hochschild maps in Chapter 9 of \cite{Weibel}. 
\begin{defn}
    For $i=0\dots,n+1$, we define the coface maps
$$\hat{\pd}^{n;i}_m:\widehat{C}^n_m(V,W)\to \widehat{C}^{n+1}_{m-1}(V,W)$$
by
 $$\hat{\pd}^{n;i}_m(\Phi) = \begin{cases}
    E^{(1)}_W\circ_2\Phi & \text{ if }i=0\\
    \Phi\circ_i E^{(2)}_V & \text{ if }1\leq i\leq n\\
    E^{W,(1)}_{WV}\circ_0\Phi & \text{ if }i=n+1
\end{cases}$$
\end{defn}
To check that the maps so defined form a family of coface maps, it suffices to prove the following technical lemma. 
\begin{lemma}\label{CosimpLemma}
    For $n\in\N$, $i,j=0,\dots,n+1$, $i<j$, and $m\in \Z_++1$, 
    $$\hat{\pd}^{n;j}_{m-1}\circ \hat{\pd}^{{n-1};i}_m=\hat{\pd}^{n;i}_{m-1}\circ\hat{\pd}^{{n-1};j-1}_m.$$ 
\end{lemma}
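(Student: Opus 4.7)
The plan is to prove the cosimplicial identity
$$\hat{\pd}^{n;j}_{m-1}\circ \hat{\pd}^{n-1;i}_m=\hat{\pd}^{n;i}_{m-1}\circ\hat{\pd}^{n-1;j-1}_m$$
by case analysis on $(i,j)$ with $0\le i<j\le n+1$, splitting into the ``interior-interior'' case $1\le i$ and $j\le n$ and the ``boundary'' cases involving $i=0$ and/or $j=n+1$. In each case, the strategy is to unfold both compositions applied to $\Phi\in\widehat{C}^{n-1}_m(V,W)$, evaluate them at $v_1\otimes\cdots\otimes v_{n+1}$ and $(z_1,\dots,z_{n+1})$, and compare the resulting $\overline{W}$-valued rational functions.

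In the interior-interior case, if $j\geq i+2$ the two compositions literally place $E^{(2)}_V(v_i,v_{i+1})$ and $E^{(2)}_V(v_j,v_{j+1})$ into disjoint slots of $\Phi$; the shift from $j$ to $j-1$ on the right-hand side is exactly what accounts for the prior collapse of two slots into one on the left. If $j=i+1$, the left side produces $E^{(2)}_V(v_i,E^{(2)}_V(v_{i+1},v_{i+2}))$ in slot $i$ of $\Phi$, while the right side produces $E^{(2)}_V(E^{(2)}_V(v_i,v_{i+1}),v_{i+2})$ there; both are analytic continuations of $E^{(3)}_V(v_i\otimes v_{i+1}\otimes v_{i+2};\vac)$ by the associativity of iterated vertex operators built into the duality axiom, so they coincide as $\overline{V}$-valued rational functions. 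The same associativity argument handles the mixed adjacent boundary cases $(i,j)=(0,1)$ and $(i,j)=(n,n+1)$, where one uses the module duality axiom to equate $Y_W(Y(v_1,z_1-z_2)v_2,z_2)$ with $Y_W(v_1,z_1)Y_W(v_2,z_2)$ (and the symmetric right-action version for the other corner).

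The remaining boundary cases where $i=0$ or $j=n+1$ but not adjacent to an interior associativity are straightforward: both sides produce the same expression with a vertex operator prepended or appended and an internal $E^{(2)}_V$ inserted, and the sign prefactor from $E^{W,(1)}_{WV}\circ_0$ matches on both sides once the parity-preserving property of $\Phi$ is used to compute $|\Phi(\dots)|$. The most delicate case is $(i,j)=(0,n+1)$, where the left side wraps $Y_W(v_{n+1},z_{n+1})$ around $Y_W(v_1,z_1)\Phi(v_2\otimes\cdots\otimes v_n)$ with the sign prescribed by $E^{W,(1)}_{WV}\circ_0$, while the right side wraps $Y_W(v_1,z_1)$ around $Y_W(v_{n+1},z_{n+1})\Phi(v_2\otimes\cdots\otimes v_n)$. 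By the supercommutativity of the module duality axiom, these two products of $Y_W$-operators agree as $\overline{W}$-valued rational functions up to the factor $(-1)^{|v_1||v_{n+1}|}$, which is precisely the discrepancy between the two sign prefactors once the parity of $\Phi(v_2\otimes\cdots\otimes v_n)$ is accounted for. I expect this parity bookkeeping to be the main obstacle: the signs coming from $E^{W,(1)}_{WV}\circ_0$, from the super-duality of $Y_W$-operators, and from the parity-preserving property of $\Phi$ must all be reconciled modulo $2$. Once this matching is established, the cosimplicial identity follows in every case, and consequently the coface maps descend to give well-defined cosimplicial structure on $\widehat{C}^{\bullet}_\bullet(V,W)$.
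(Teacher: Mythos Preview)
Your proposal is correct and follows essentially the same route as the paper: a case analysis on $(i,j)$ into interior-interior (adjacent vs.\ non-adjacent), the two adjacent boundary corners $(0,1)$ and $(n,n+1)$, the non-adjacent boundary cases $i=0$ or $j=n+1$, and the mixed corner $(0,n+1)$, with associativity (module duality) handling the adjacent cases and supercommutativity plus parity bookkeeping handling $(0,n+1)$. The paper's proof is organized into the same seven cases and singles out $(i,j)=(0,n+1)$ for the explicit sign computation you describe; the only cosmetic difference is that for the interior and non-adjacent boundary cases the paper cites Lemmas~4.18--4.22 of \cite{Thesis} rather than writing out the analytic-continuation arguments inline.
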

\begin{proof}
    Clearly, the images of each $\hat{\pd}^{n;i}_m$ is indeed composable with $m-1$ vertex operators. Because of absolute convergence, we may change the order of iterated sums. Now let $\Phi\in\widehat{C}^{n-1}_m(V,W)$. 
    Then for $v_1,\dots,v_{n+1}\in $ that are $\Z_2$-homogeneous, $w'\in W'$, and $(z_1,\dots,z_{n+1})\in F_{n+1}\C$, we can realize the coface maps explicitly:
    \begin{align*}
        \lrgl{w'&,((\hat{\pd}^{n;i}_m(\Phi))(v_1\otimes\cdots\otimes v_{n+1}))(z_1,\dots,z_{n+1})} \\
            &= \begin{cases}
                R\bigg(\lrgl{w',Y_W(v_1,z_1)\Phi(v_2\otimes\cdots\otimes v_{n+1};z_2,\dots,z_{n+1})}\bigg) & \text{ if }i=0\\
                R\bigg(\lrgl{w',\Phi(v_1\otimes\cdots\otimes v_{i-1}\otimes Y(v_i,z_i-\zeta_i)Y(v_{i+1},z_{i+1}-\zeta_i)\vac \otimes v_{i+2}\cdots v_{n+1};\\
                \qquad z_1,\dots,z_{i-1},\zeta_i,z_{i+2},\dots,z_{n+1})}\bigg) & \text{ if }1\leq i\leq n\\
                (-1)^{\sum\limits_{i=1}^n|v_i||v_{n+1}|}R\bigg(\lrgl{w',Y_W(v_{n+1},z_{n+1})(\Phi(v_1\otimes\cdots\otimes v_n))(z_1,\dots,z_n)}\bigg) & \text{ if }i=n+1
            \end{cases}
    \end{align*}
    Because the above is independent of $\zeta_i$, we can take $\zeta_i=z_{i+1}$, so for $1\leq i\leq n$ we get
    \begin{align*}
         \lrgl{w',((\hat{\pd}^{n;i}_m(&\Phi))(v_1\otimes\cdots\otimes v_{n+1}))(z_1,\dots,z_{n+1})} \\
         =R(\lrgl{&w',(\Phi(v_1\otimes\cdots\otimes v_{i-1}\otimes(Y(v_i,z_i-z_{i+1})v_{i+1})\otimes v_{i+2}\cdots v_{n+1}))\\
         &(z_1,\dots,z_{i-1},z_{i+1},\dots,z_{n+1})})
    \end{align*}
    We now break things into cases, using lemmas in \cite{Thesis}:
    \begin{enumerate}
        \item First we show $\hat{\pd}^{n;j}_{m-1}\circ \hat{\pd}^{{n-1};0}_m=\hat{\pd}^{n;0}_{m-1}\circ \hat{\pd}^{{n-1};j-1}_m$, for $2\leq j\leq n.$ We have
        $$\hat{\pd}^{n;j}_{m-1}(\hat{\pd}^{{n-1};0}_m \Phi)=(E^{(1)}_W\circ_2\Phi)\circ_j E^{(2)}_V$$
        and
        $$\hat{\pd}^{n;0}_{m-1} (\hat{\pd}^{{n-1};j-1}_m\Phi)= E^{(1)}_W\circ_2(\Phi\circ_{j-1} E^{(2)}_V)$$
        By Lemma 4.19 in \cite{Thesis}, these are equal, proving the first case. \label{simplex-Case1}
        
        \item Next we show $\hat{\pd}^{n;1}_{m-1}\circ \hat{\pd}^{{n-1};0}_m=\hat{\pd}^{n;0}_{m-1}\circ \hat{\pd}^{{n-1};0}_m$. We have
        $$\hat{\pd}^{n;1}_{m-1}(\hat{\pd}^{{n-1};0}_m\Phi)=(E^{(1)}_W\circ_2\Phi)\circ_1 E^{(2)}_V$$
        and 
        $$\hat{\pd}^{n;0}_{m-1}(\hat{\pd}^{{n-1};0}_m\Phi)=E^{(1)}_W\circ_2(E^{(1)}_W\circ_2\Phi).$$
        By Lemma 4.18 in \cite{Thesis}, these are equal, proving the second case. \label{simplex-Case2}

        \item Now we show $\hat{\pd}^{n;n+1}_{m-1}\circ \hat{\pd}^{{n-1};0}_m=\hat{\pd}^{n;0}_{m-1}\circ \hat{\pd}^{{n-1};n}_m$. First consider
        $$\hat{\pd}^{n;n+1}_{m-1}(\hat{\pd}^{{n-1};0}_m\Phi)=E^{W,(1)}_{WV}\circ_0(E^{(1)}_W\circ_2\Phi).$$
        Evaluating at $v_1,\dots,v_n\in V$ (all $\Z_2$-homogeneous) and $(z_1,\dots,z_{n+1})\in F_{n+1}\C$ and pairing with $w'\in W'$, we obtain
        \begin{align}
            \lrgl{w'&,E(E^{W,(1)}_{WV}((E^{(1)}_W\circ_2\Phi)(v_1\otimes\cdots\otimes v_n);v_{n+1}))(z_1,\dots,z_{n+1})}\nonumber\\
            &=R(\lrgl{w',E^{W,(1)}_{WV}((E^{(1)}_W\circ_2\Phi)(v_1\otimes\cdots\otimes v_n);v_{n+1})(z_1,\dots,z_{n+1})})\nonumber\\
            &=R(\lrgl{w',E^{W,(1)}_{WV}(E^{(1)}_W(v_1;\Phi(v_2\otimes\cdots\otimes v_n));v_{n+1})(z_1,\dots,z_n)}).\nonumber\\
            &=(-1)^{|v_{n+1}|\sum\limits_{i=1}^n|v_i|}R(\lrgl{w',E^{(1)}_W(v_{n+1};E^{(1)}_W(v_1;\Phi(v_2\otimes\cdots\otimes v_n)))(z_{n+1},z_1,\dots,z_n)})\nonumber\\
            &=(-1)^{|v_{n+1}|\sum\limits_{i=1}^n|v_i|}R(\lrgl{w',Y_W(v_{n+1},z_{n+1})Y_W(v_1,z_1)(\Phi(v_2\otimes\cdots\otimes v_n)(z_2,\dots,z_n)}).\label{simplex-Case3-1}
        \end{align}

        On the other hand, consider
        $$(\hat{\pd}^{n;0}_{m-1})(\hat{\pd}^{{n-1};n}_m)\Phi=E^{(1)}_W\circ_2(E^{W,(1)}_{WV}\circ_0\Phi)$$
         Evaluating at $v_1,\dots,v_n$ and $(z_1,\dots,z_{n+1})$ and pairing with $w'$, we obtain
         \begin{align*}
             \lrgl{w'&,E(E^{(1)}_W(v_1;(E^{W,(1)}_{WV}\circ_0\Phi)(v_2\otimes\cdots\otimes v_{n+1})))(z_1,\dots,z_{n+1})}\\
             &=R(\lrgl{w',E^{(1)}_W(v_1;(E^{W,(1)}_{WV}\circ_0\Phi)(v_2\otimes\cdots\otimes v_{n+1}))(z_1,\dots,z_{n+1})})\\
             &=R(\lrgl{w',E^{(1)}_W(v_1;E^{W,(1)}_{WV}(\Phi(v_2\otimes\cdots\otimes v_n);v_{n+1}))(z_1,\dots,z_{n+1})})\\
             &=(-1)^{|v_{n+1}|(|v_2|+\cdots+|v_n|)}R(\lrgl{w',E^{(1)}_W(v_{1};E^{(1)}_W(v_{n+1};\Phi(v_2\otimes\cdots\otimes v_n)))(z_1,z_{n+1},\dots,z_n)})\\
             &=(-1)^{|v_{n+1}|(|v_2|+\cdots+|v_n|)}R(\lrgl{w',Y_W(v_1,z_1)Y_W(v_{n+1},z_{n+1})(\Phi(v_2\otimes\cdots\otimes v_n)(z_2,\dots,z_n)}).
         \end{align*}
         Using the supercommutativity of the algebra, this is the same as
        \begin{align}
            (-1)^{|v_{n+1}||v_1|}(-1)^{|v_{n+1}|(|v_2|+\cdots+|v_n|)}R(\lrgl{&w',Y_W(v_{n+1},z_{n+1})Y_W(v_1,z_1)\cdot\nonumber\\
            &\cdot(\Phi(v_2\otimes\cdots\otimes v_n)(z_2,\dots,z_n)}).\label{simplex-Case3-2}
        \end{align}
        We see that $(\ref{simplex-Case3-1})=(\ref{simplex-Case3-2})$, proving the third case.

        \item Next we show $\hat{\pd}^{n;j}_{m-1}\circ \hat{\pd}^{{n-1};i}_m=\hat{\pd}^{n;i}_{m-1}\circ \hat{\pd}^{{n-1};j-1}_m$, for $1\leq i\leq n+2$ and $i+2\leq j\leq n$. We have
        $$\hat{\pd}^{n;j}_{m-1}(\hat{\pd}^{{n-1};i}_m\Phi)=(\Phi\circ_i E^{(2)}_V)\circ_j E^{(2)}_V$$
        and
        $$\hat{\pd}^{n;i}_{m-1}(\hat{\pd}^{{n-1};j-1}_m\Phi)=(\Phi\circ_{j-1} E^{(2)}_V)\circ_i E^{(2)}_V$$
        By Lemma 4.21 in \cite{Thesis}, these are equal, proving the fourth case.

        \item Now we show $\hat{\pd}^{n;i+1}_{m-1}\circ \hat{\pd}^{{n-1};i}_m=\hat{\pd}^{n;i}_{m-1}\circ \hat{\pd}^{{n-1};i}_m$, for $1\leq i\leq n-1.$ We have
        $$\hat{\pd}^{n;i+1}_{m-1}(\hat{\pd}^{{n-1};i}_m\Phi)=(\Phi\circ_{i} E^{(2)}_V)\circ_{i+1} E^{(2)}_V$$
        and
        $$\hat{\pd}^{n;i}_{m-1}(\hat{\pd}^{{n-1};i}_m\Phi)=(\Phi\circ_{i} E^{(2)}_V)\circ_{i} E^{(2)}_V$$
        By Lemma 4.22 in \cite{Thesis}, these are equal, proving the fifth case.
        
        \item The case of $\hat{\pd}^{n;n+1}_{m-1}\circ \hat{\pd}^{{n-1};i}_m=\hat{\pd}^{n;i}_{m-1}\circ \hat{\pd}^{{n-1};n}_m$, for $1\leq i\leq n-1,$ follows analogously to case \ref{simplex-Case1}, except using the right action.
        
        \item Similarly, the case of $\hat{\pd}^{n;n+1}_{m-1}\circ \hat{\pd}^{{n-1};n}_m=\hat{\pd}^{n;n}_{m-1}\circ \hat{\pd}^{{n-1};n}_m$ follows similarly to case \ref{simplex-Case2}, except using the right action. As in case \ref{simplex-Case2}, we need associativity.
    \end{enumerate}
    This handles all possible cases, proving that $\hat{\pd}^{n;i}_m$ forms a family of coface maps. 
\end{proof}

\subsection{Coboundary map}
We now define the coboundary map
$$\hat{\delta}^n_m:\widehat{C}^n_m(V,W)\to \widehat{C}^{n+1}_{m-1}(V,W)$$
by
$$\hat{\delta}^n_m=\sum_{i=0}^n(-1)^i\hat{\pd}^{n;i}_m.$$
We also define 
$$\hat{\delta^2_{\frac{1}{2}}}:\widehat{C}^2_{\frac{1}{2}}(V,W)\to \widehat{C}^3_0(V,W)$$
by
\begin{align*}
    \lrgl{w',(&\hat{\delta}^2_{\frac{1}{2}}(\Phi))(v_1\otimes v_2\otimes v_3;z_1,z_2,z_3)}\\
    &=R\left(\lrgl{w',E^{(1)}_W(v_1;\Phi(v_2\otimes v_3);z_1,z_2,z_3)}\right.\\
    &\quad\left.+\lrgl{w',\Phi(v_1\otimes E^{(2)}_V(v_2\otimes v_3;\vac);z_1,z_2,z_3)}\right)\\
    &\qquad-R\left(\lrgl{w',\Phi(E^{(2)}_V(v_1\otimes v_2;\vac)\otimes v_3;z_1,z_2,z_3)}\right.\\
    &\qquad\quad\left.+\lrgl{w',E^{W,(1)}_{WV}(\Phi(v_1\otimes v_2);v_3;z_1,z_2,z_3)}\right)
\end{align*}
for $w'\in W'$, $\Phi\in\widehat{C}^2_{\frac{1}{2}}(V,W)$, $v_1,v_2,v_3\in V$, and $(z_1,z_2,z_3)\in F_3\C.$
\begin{rema}
The use of $\overline{W}$-valued function is crucial to make sense of the coboundary map. As maps $V^{\otimes n}\to \widetilde{W}_{z_1, ..., z_n}$, arithmetic operations are permitted. If we stick to the series, even with the composable condition, such arithmetic operations are not allowed since there does not exist a common region of convergence for each series appearing in the summand.  
\end{rema}

\begin{prop}
    For $m,n\in\N$ with $m\geq 2$, $\hat{\delta}^{n+1}_{m-1}\circ\hat{\delta}^{n}_{m}=0$. Furthermore $\hat{\delta}^{2}_{\frac{1}{2}}\circ \hat{\delta}^{1}_{2}=0$. \label{vertexHochschild}
\end{prop}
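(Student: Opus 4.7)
The plan is to apply the standard simplicial-algebra argument: once the cosimplicial identities of Lemma \ref{CosimpLemma} are available, the alternating-sum coboundary automatically squares to zero. I would begin by expanding
$$\hat{\delta}^{n+1}_{m-1}\circ \hat{\delta}^n_m = \sum_{j=0}^{n+2}\sum_{i=0}^{n+1}(-1)^{i+j}\,\hat{\pd}^{n+1;j}_{m-1}\circ \hat{\pd}^{n;i}_m,$$
and split the double sum into the upper-triangular part $i < j$ and the lower-triangular part $i \geq j$. On the upper-triangular part I would apply Lemma \ref{CosimpLemma} (in the shifted form $\hat{\pd}^{n+1;j}_{m-1}\circ\hat{\pd}^{n;i}_m = \hat{\pd}^{n+1;i}_{m-1}\circ\hat{\pd}^{n;j-1}_m$ for $i<j$) and reindex with $k = j-1$. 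The resulting sum runs over $0 \leq i \leq k \leq n+1$ with sign $(-1)^{i+k+1}$. Relabeling $(j,i)\mapsto(i,k)$ in the lower-triangular part gives exactly the same range of indices but with sign $(-1)^{i+k}$. The two contributions therefore cancel termwise, establishing $\hat{\delta}^{n+1}_{m-1}\circ \hat{\delta}^n_m = 0$.

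For the second identity $\hat{\delta}^2_{1/2}\circ \hat{\delta}^1_2 = 0$, the combinatorial cancellation is identical in spirit but requires two additional checks. First, one needs to verify that $\hat{\delta}^1_2 \Phi$ really lies in $\widehat{C}^2_{1/2}(V,W)$ whenever $\Phi \in \widehat{C}^1_2(V,W)$, so that $\hat{\delta}^2_{1/2}$ is applicable. This reduces to establishing the two convergence clauses in the definition of $1/2$-composability for each of the three coface summands of $\hat{\delta}^1_2 \Phi$, which follows from composability of $\Phi$ with two vertex operators together with the convergence of products and iterates guaranteed by the duality axiom. Once this is in place, expanding $\hat{\delta}^2_{1/2}\circ\hat{\delta}^1_2\Phi$ produces twelve terms of the form $\hat{\pd}^{2;j}_1\circ\hat{\pd}^{1;i}_2\Phi$ with alternating signs, which the cosimplicial identities of Lemma \ref{CosimpLemma} organize into cancelling pairs exactly as above.

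The main obstacle I anticipate is not the combinatorics but the analytic bookkeeping in the $1/2$-composable step: the summands appearing in $\hat{\delta}^2_{1/2}\Phi$ are defined as analytic continuations of series whose regions of absolute convergence typically do not overlap, so the cancellations prescribed by the cosimplicial identities must be interpreted at the level of $\overline{W}$-valued rational functions on $F_3\C$ rather than as formal series. Working consistently with the $R$ and $E$ notations introduced earlier lets one transfer the formal simplicial cancellation to an identity of rational functions on $F_3\C$, and once this translation is secured the proof reduces cleanly to the general argument above.
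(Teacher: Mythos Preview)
Your argument for the first claim is exactly the paper's: invoke the cosimplicial identities of Lemma \ref{CosimpLemma} and run the standard alternating-sum cancellation (the paper simply cites Weibel rather than writing out the reindexing).

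For the second claim your approach works but is slightly roundabout compared with the paper. You propose to check that $\hat{\delta}^1_2\Phi\in\widehat{C}^2_{1/2}(V,W)$ and then expand $\hat{\delta}^2_{1/2}(\hat{\delta}^1_2\Phi)$ into twelve coface terms. The subtlety is that $\hat{\delta}^2_{1/2}$ is \emph{defined} with its four summands grouped into two $R(\cdots)$ blocks precisely because, for a map that is only $1/2$-composable, the individual summands need not converge separately; only the paired sums do. So to legitimately split into twelve terms and pair them via Lemma \ref{CosimpLemma}, you actually need $\hat{\delta}^1_2\Phi\in\widehat{C}^2_1(V,W)$, not merely $\widehat{C}^2_{1/2}(V,W)$. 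The paper makes exactly this observation: since $\hat{\delta}^1_2(\widehat{C}^1_2(V,W))\subseteq\widehat{C}^2_1(V,W)\subseteq\widehat{C}^2_{1/2}(V,W)$, on this image $\hat{\delta}^2_{1/2}$ agrees with $\hat{\delta}^2_1$, and the second claim reduces immediately to the first with no further analytic work. Once you upgrade your intermediate check from $1/2$-composability to $1$-composability (which is automatic from the definition of the coface maps), your argument and the paper's coincide.
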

\begin{proof}
    The first claim follows directly from Lemma \ref{CosimpLemma} and the corresponding discussion in Chapter 9 of \cite{Weibel}. The second follows from the first, and that $\hat{\delta}^1_2(\widehat{C}^1_2(V,W))\sseq\widehat{C}^2_1(V,W)\sseq\widehat{C}^2_{\frac{1}{2}}(V,W).$ 
\end{proof}

\begin{thm}
    For $n\in\N$, $m\in\Z_+$, $\hat{\delta}^n_m(C^n_m(V,W))\sseq C^{n+1}_{m-1}(V,W).$ In addition, we have that $\hat{\delta}^2_{\frac{1}2{}}(C^2_{\frac{1}{2}}(V,W))\sseq C^3_0(V,W).$ \label{vertexHarrison}
\end{thm}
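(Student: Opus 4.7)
The plan is to adapt the combinatorial structure of the proof of Theorem \ref{superHarrisonthm} to the vertex superalgebra setting. Proposition \ref{vertexHochschild} already gives $\hat{\delta}^n_m(\Phi)\in\widehat{C}^{n+1}_{m-1}(V,W)$, so the $\Db$-derivative property, $\db$-conjugation property, and composability with $m-1$ vertex operators are automatic. What remains is to verify parity preservation and the super shuffle identity for $\hat{\delta}^n_m(\Phi)$.

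First I would verify parity preservation, which is straightforward. Each of the three building blocks $E^{(1)}_W\circ_2\Phi$, $\Phi\circ_i E^{(2)}_V$, and $E^{W,(1)}_{WV}\circ_0\Phi$ inherits parity preservation from $\Phi$ because $Y_W(v,x)w$ has parity $|v|+|w|$ by the $\Z_2$-grading axiom, and the scalar sign prefactor in $E^{W,(1)}_{WV}\circ_0\Phi$ does not alter the total parity.

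Next, for the super shuffle identity, I would fix $p$ with $1\leq p\leq n$ and homogeneous $v_1,\ldots,v_{n+1}\in V$, and compute the super shuffle sum
\begin{align*}
    \sum_{\sigma\in J\iv_{n+1;p}}(-1)^\sigma(-1)^{\sigma^o}\sigma(\hat{\delta}^n_m(\Phi)(v_{\sigma(1)}\otimes\cdots\otimes v_{\sigma(n+1)})),
\end{align*}
showing it vanishes. Expanding $\hat{\delta}^n_m=\sum_{i=0}^{n+1}(-1)^i\hat{\pd}^{n;i}_m$, the sum decomposes into three pieces: an $i=0$ piece built from $E^{(1)}_W\circ_2\Phi$, an $i=n+1$ piece built from $E^{W,(1)}_{WV}\circ_0\Phi$, and middle pieces $i=1,\ldots,n$ built from $\Phi\circ_i E^{(2)}_V$. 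These are the vertex-algebraic analogues of (\ref{shuffle-sum-of-df-1}), (\ref{shuffle-sum-of-df-2}), and (\ref{shuffle-sum-of-df-3}) respectively in the proof of Theorem \ref{superHarrisonthm}. I would then mirror the case analysis there verbatim: for the $i=0$ and $i=n+1$ pieces, splitting the shuffle sum according to whether $\sigma\iv(1)=1$ or $p+1$ (respectively $\sigma\iv(n+1)=n+1$ or $p$) and invoking $su_{n,p-1}\Phi=0$ or $su_{n,p}\Phi=0$ kills all but four boundary terms. These remaining terms cancel in pairs (the $p=1$ left-action boundary with the $p=1$ right-action boundary, and similarly for $p=n$) using the skew-symmetry identity $Y^W_{WV}(w,x)v=(-1)^{|v||w|}e^{x\Db_W}Y_W(v,-x)w$, which is the vertex-algebraic analogue of Remark \ref{rema-right-action}. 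For the middle pieces, the decomposition into four subcases by the positions of $\sigma\iv(i)$ and $\sigma\iv(i+1)$ parallels (\ref{shuffle-sum-of-df-3-1})--(\ref{shuffle-sum-of-df-3-4}): the two ``same-side'' subcases vanish by the super shuffle condition on $\Phi$ applied to a tensor in which $E^{(2)}_V(v_i\otimes v_{i+1};\vac)$ replaces one factor, while the two ``opposite-side'' subcases (\ref{shuffle-sum-of-df-3-2}) and (\ref{shuffle-sum-of-df-3-3}) cancel in pairs using the supercommutativity supplied by the duality axiom of $V$.

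The hard part is technical rather than conceptual: every manipulation must be carried out in the space of $\overline{W}$-valued rational functions rather than with formal series, since the various summands have disjoint regions of convergence. The composability hypothesis on $\Phi$ is precisely what allows each summand to be analytically continued to a common rational function on $F_{n+2}\C$, whereupon the algebraic identities (supercommutativity via the duality axiom, the formal Taylor theorem for shifting the insertion point, and the associativity of vertex operators embedded in $E^{(2)}_V(v_i\otimes v_{i+1};\vac)$) hold as identities of $\overline{W}$-valued rational functions. The case $\hat{\delta}^2_{1/2}$ is the specialization of this argument to $n=2$; the $1/2$-composability condition supplies exactly the analytic continuation needed for the three contributions appearing in $\hat{\delta}^2_{1/2}(\Phi)$.
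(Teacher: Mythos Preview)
Your treatment of the first claim is essentially the paper's: both reduce to the combinatorics of Theorem~\ref{superHarrisonthm}, and your outline of how the $i=0$, middle, and $i=n+1$ pieces cancel is faithful to that argument.

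There is, however, a genuine gap in your handling of the $\tfrac{1}{2}$-composable case. You say it is ``the specialization of this argument to $n=2$'' and that $\tfrac{1}{2}$-composability ``supplies exactly the analytic continuation needed for the three contributions.'' But the $\tfrac{1}{2}$-composability hypothesis does \emph{not} guarantee that $\hat\pd^{2;0}\Phi$, $\hat\pd^{2;1}\Phi$, $\hat\pd^{2;2}\Phi$, $\hat\pd^{2;3}\Phi$ are individually well-defined $\overline{W}$-valued rational functions; it only guarantees that the two grouped sums $\hat\pd^{2;0}\Phi+\hat\pd^{2;2}\Phi$ and $\hat\pd^{2;1}\Phi+\hat\pd^{2;3}\Phi$ admit analytic continuation. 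The general argument you outline separates the shuffle sum by the index $i$ and cancels pieces of (\ref{shuffle-sum-of-df-1}), (\ref{shuffle-sum-of-df-2}), (\ref{shuffle-sum-of-df-3}) against one another; this separation is not available when only the paired $R$-blocks make sense.

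The paper addresses this by giving a separate direct computation for $n=2$: it first observes that the $p=2$ shuffle sum is obtained from the $p=1$ shuffle sum by a cyclic permutation, so only $p=1$ needs checking; then it writes out the six $R$-blocks (three shuffles times two blocks each) and shows they cancel in pairs as $R$-blocks, using only the identities $\Phi(u_1\otimes u_2)=(-1)^{|u_1||u_2|}\Phi(u_2\otimes u_1)$, the analogous identity for $E^{(2)}_V$, and $E^{(1)}_W(u;w)=(-1)^{|u||w|}E^{W,(1)}_{WV}(w;u)$. This block-against-block cancellation is exactly what is needed to stay within the regime where $\tfrac{1}{2}$-composability applies, and it is not an automatic consequence of the general case-by-$i$ analysis you described.
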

\begin{proof}
    The first claim follows from combinatorics identical to Theorem \ref{superHarrisonthm}. As for the second claim, let $\Phi\in C^2_{\frac{1}{2}}(V,W)$. The shuffles of $J_1(1,2,3)$ are 
     $$\begin{pmatrix}
    1 & 2 & 3\\
    1 & 2 & 3
    \end{pmatrix},\qquad \begin{pmatrix}
    1 & 2 & 3\\
    2 & 1 & 3
    \end{pmatrix},\quad\text{and}\quad \begin{pmatrix}
    1 & 2 & 3\\
    3 & 1 & 2
    \end{pmatrix}.$$
    Furthermore, precomposing the elements of $J_2(1,2,3)$ with $\begin{pmatrix}
    1 & 2 & 3\\
    3 & 1 & 2
    \end{pmatrix}$ returns exactly the set $J_1(1,2,3)$. Therefore it is sufficient to show that
    $$\sum_{\sigma\in J\iv_{3;1}}(-1)^\sigma(-1)^{\sigma^o}\sigma(\hat{\delta}^2_{\frac{1}{2}}(\Phi))=0.$$
    Let $v_1,v_2,v_3\in V$, all $\Z_2$ homogeneous, $w'\in W'$, $(z_1,z_2,z_3)\in F_3\C$, and $\zeta\in \C$ such that $(z_i-\zeta,z_j-\zeta),(z_k-\zeta)\in F_2\C$ for $i,j,k=1,2,3$, $i<j$. Then by definition we have
    \begin{align}
        \sum_{J\iv_{3,1}}(-1)^{\sigma}&(-1)^{\sigma^o}\lrgl{w',\sigma((\hat{\delta}^2_{\frac{1}{2}}(\Phi))(v_{\sigma(1)}\otimes v_{\sigma(2)}\otimes v_{\sigma(3)}))(z_1,z_2,z_3)}\nonumber\\
        =\lrgl{w',(&(\hat{\delta}^2_{\frac{1}{2}}(\Phi))(v_1\otimes v_2\otimes v_3))(z_1,z_2,z_3)}\nonumber\\
        -(-&1)^{|v_1||v_2|}\lrgl{w',((\hat{\delta}^2_{\frac{1}{2}}(\Phi))(v_2\otimes v_1\otimes v_3))(z_2,z_1,z_3)}\nonumber\\
        +(-&1)^{|v_1|(|v_2|+|v_3|)}\lrgl{w',((\hat{\delta}^2_{\frac{1}{2}}(\Phi))(v_2\otimes v_3\otimes v_1))(z_2,z_3,z_1)}\nonumber\\
        =R&(\lrgl{w',(E^{(1)}_W(v_1;\Phi(v_2\otimes v_3)))(z_1,z_2,z_3)}\nonumber\\
        &+\lrgl{w',(\Phi(v_1\otimes E^{(2)}_V(v_2\otimes v_3;\vac)))(z_1,z_2,z_3)})\label{halfharrison1}\\
        &-R(\lrgl{w',(\Phi(E^{(2)}_V(v_1\otimes v_2;\vac)\otimes v_3))(z_1,z_2,z_3)}\nonumber\\
        &+\lrgl{w',(E^{W,(1)}_{WV}(\Phi(v_1\otimes v_2);v_3))(z_1,z_2,z_3)})\label{halfharrison2}\\
        &-(-1)^{|v_1||v_2|}R(\lrgl{w',(E^{(1)}_W(v_2;\Phi(v_1\otimes v_3)))(z_2,z_1,z_3)}\nonumber\\
        &+\lrgl{w',(\Phi(v_2\otimes E^{(2)}_V(v_1\otimes v_3;\vac)))(z_2,z_1,z_3)})\label{halfharrison3}\\
        &+(-1)^{|v_1||v_2|}R(\lrgl{w',(\Phi(E^{(2)}_V(v_2\otimes v_1;\vac)\otimes v_3))(z_2,z_1,z_3)}\nonumber\\
        &+\lrgl{w',(E^{W,(1)}_{WV}(\Phi(v_2\otimes v_1);v_3))(z_2,z_1,z_3)})\label{halfharrison4}\\
        &+(-1)^{|v_1|(|v_2|+|v_3|)}R(\lrgl{w',(E^{(1)}_W(v_2;\Phi(v_3\otimes v_1)))(z_2,z_3,z_1)}\nonumber\\
        &+\lrgl{w',(\Phi(v_2\otimes E^{(2)}_V(v_3\otimes v_1;\vac)))(z_2,z_3,z_1)})\label{halfharrison5}\\
        &-(-1)^{|v_1|(|v_2|+|v_3|)}R(\lrgl{w',(\Phi(E^{(2)}_V(v_2\otimes v_3;\vac)\otimes v_1))(z_2,z_3,z_1)}\nonumber\\
        &+\lrgl{w',(E^{W,(1)}_{WV}(\Phi(v_2\otimes v_3);v_1))(z_2,z_3,z_1)})\label{halfharrison6}\\ \nonumber
    \end{align}
    Since $\Phi$ and $E^{(2)}_V\in C^2_{\frac{1}{2}}(W,V)$, we have
    \begin{align*}
        (\Phi(u_1\otimes u_2))(\zeta_1,\zeta_2) &= (-1)^{|u_1||u_2|}(\Phi(u_2\otimes u_1))(\zeta_1,\zeta_2)\\
        (E^{(2)}_V(u_1\otimes u_2))(\zeta_1,\zeta_2) &= (-1)^{|u_1||u_2|}(E^{(2)}_V(u_2\otimes u_1))(\zeta_2,\zeta_1)
    \end{align*}
    for $u_1,u_2\in V$, both $\Z_2$ homogeneous, and $(\zeta_1,\zeta_2)\in F_2\C$. Also,
    $$E^{(1)}_W(u;w)=(-1)^{|u||w|}E^{W,(1)}_{WV}(w;u)$$
    for $u\in V$, and $w\in W$. Using these formulas, we see $(\ref{halfharrison1})=-(\ref{halfharrison6})$, $(\ref{halfharrison2})=-(\ref{halfharrison4})$, and $(\ref{halfharrison3})=-(\ref{halfharrison5})$, so the whole sum goes to zero.
\end{proof}
Note that for $m\in\Z_+$, we have $\widehat{C}^n_\infty(V,W)\sseq \widehat{C}^n_m(V,W)$ for all $n\in\N$, so we can define a map
$$\hat{\delta}^n_\infty={\hat{\delta}^n_m\bigr|}_{\widehat{C}^n_m(V,W)}:\widehat{C}^n_\infty(V,W)\to \widehat{C}^{n+1}_\infty(V,W),$$
which is independent of $m$. Now, for $n\in\N$, $m\in\Z_+$ or $m=\frac{1}{2}$ or $m=\infty$, we define $\delta^n_m={\hat{\delta}^n_m\bigr|}_{C^n_m(V,W)}$. By Proposition \ref{vertexHochschild} and Theorem \ref{vertexHarrison}, for $m\in\Z_+$ we obtain chain complexes:
\[\begin{tikzcd}
	0 & {C^0_m(V,W)} & {C^1_{m-1}(V,W)} & \cdots & {C^m_0(V,W)} & 0 \\
	0 & {C^0_3(V,W)} & {C^1_2(V,W)} & {C^2_{\frac{1}{2}}(V,W)} & {C^3_0(V,W)} & 0 \\
	0 & {C^0_\infty(V,W)} & {C^1_\infty(V,W)} & {C^2_\infty(V,W)} & {C^3_\infty(V,W)} & \cdots
	\arrow[from=1-1, to=1-2]
	\arrow["{\delta^0_m}", from=1-2, to=1-3]
	\arrow["{\delta^1_{m-1}}", from=1-3, to=1-4]
	\arrow["{\delta^{m-1}_1}", from=1-4, to=1-5]
	\arrow[from=1-5, to=1-6]
	\arrow[from=2-1, to=2-2]
	\arrow["{\delta^0_3}", from=2-2, to=2-3]
	\arrow["{\delta^1_2}", from=2-3, to=2-4]
	\arrow["{\delta^2_{\frac{1}{2}}}", from=2-4, to=2-5]
	\arrow[from=2-5, to=2-6]
	\arrow[from=3-1, to=3-2]
	\arrow["{\delta^0_\infty}", from=3-2, to=3-3]
	\arrow["{\delta^1_\infty}", from=3-3, to=3-4]
	\arrow["{\delta^2_\infty}", from=3-4, to=3-5]
	\arrow["{\delta^3_\infty}", from=3-5, to=3-6]
\end{tikzcd}\]
\begin{defn}
    Let $n\in\N.$ For $m\in\Z_+$, we define the {\it $n$-th cohomology $H^n_m(V,W)$ of $V$ with coefficients in $W$ and composable with $m$ vertex operators} to be
    $$H^n_m(V,W)=\ker{\delta^n_m}/\text{im\;}{\delta^{n-1}_{m+1}}.$$
    We also define
    $$H^2_{\frac{1}{2}}(V,W)=\ker{\delta^2_{\frac{1}{2}}}/\text{im\;}{\delta^1_2}$$
    and 
    $$H^n_\infty(V,W)=\ker{\delta^n_\infty}/\text{im\;}{\delta^{n-1}_\infty}.$$
\end{defn}
A simple check shows for $m\in\Z_+$, $H^0_m(V,W)=W$. For $m_1,m_2\in\Z$ with $m_1\leq m_2$ and $n\in\Z$, since $C^n_{m_2}(V,W)\sseq C^n_{m_1}(V,W)$, we have an injective linear map $f_{m_1m_2}:H^n_{m_2}(V,W)\to H^n_{m_1}(V,W)$ given by
$$f_{m_1m_2}(\Phi+\ker{\delta^n_{m_2}})=\Phi+\ker{\delta^n_{m+1}}.$$
The follow proposition then follows from the definitions:
\begin{prop}
    For $n\in\N$, $(H^n_m(V,W),f_{m_1m_2})$ is an inverse system, with inverse limit linearly isomorphic to $H^n_\infty(V,W)$.
\end{prop}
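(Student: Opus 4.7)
The plan is to split the proof into two parts: verifying the inverse system axioms, and then establishing the isomorphism $H^n_\infty(V,W) \cong \varprojlim_m H^n_m(V,W)$.

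First, the well-definedness of each $f_{m_1 m_2}$ follows immediately from the inclusions $C^n_{m_2} \sseq C^n_{m_1}$ (so cocycles at level $m_2$ remain cocycles at level $m_1$, since the operators $\delta^n_m$ are restrictions of a common $\hat\delta^n$) and $C^{n-1}_{m_2+1} \sseq C^{n-1}_{m_1+1}$ (so $m_2$-coboundaries are $m_1$-coboundaries). The compatibility conditions $f_{m_1 m_2} \circ f_{m_2 m_3} = f_{m_1 m_3}$ and $f_{mm} = \mathrm{id}$ are then trivial from the explicit formula, since they amount to simply enlarging the equivalence relation by which one quotients.

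Second, since $C^n_\infty = \bigcap_m C^n_m$, the inclusions $C^n_\infty \hookrightarrow C^n_m$ induce canonical maps $\iota_m: H^n_\infty \to H^n_m$, compatible with the $f_{m_1 m_2}$. By the universal property of the inverse limit, these assemble into a linear map $\varphi: H^n_\infty(V,W) \to \varprojlim_m H^n_m(V,W)$. I would then construct an inverse $\psi$ as follows: given a compatible system $([\Phi_m])_m$, use the injectivity of the $f_{m_1 m_2}$ to realize all the $[\Phi_m]$ as a single class when pushed into the largest member $H^n_1$, and then lift this class to a representative in $\bigcap_m C^n_m = C^n_\infty$ by choosing coherent representatives across the system.

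The main obstacle will be the surjectivity of $\varphi$. Given a compatible sequence $([\Phi_m])_m$, one must produce a single $\Phi_\infty \in C^n_\infty$ whose cohomology class in each $H^n_m$ agrees with $[\Phi_m]$. Since the representatives for different $m$ may a priori disagree (differing by $m$-coboundaries), a coherent choice is required. The key input is the identification $\im \delta^{n-1}_\infty = \bigcap_m \im \delta^{n-1}_{m+1}$ viewed inside $C^n_\infty$, i.e., the assertion that any element cobounding at every finite level can be cobounded by a cochain in $C^{n-1}_\infty$. Together with $\ker \delta^n_\infty = \bigcap_m \ker \delta^n_m \cap C^n_\infty$ (which is clear), this ensures that intersections commute with taking cohomology. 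Once this is in hand, the inverse $\psi$ is given by any such coherent representative, and the identities $\varphi \circ \psi = \mathrm{id}$ and $\psi \circ \varphi = \mathrm{id}$ follow by straightforward unwinding of the definitions.
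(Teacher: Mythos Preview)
The paper offers no argument here: it asserts (just before the proposition) that the $f_{m_1m_2}$ are injective and then declares that the proposition ``follows from the definitions.'' Your write-up goes well beyond this, and you correctly isolate the only genuinely nontrivial point, namely the identity
\[
\im \delta^{n-1}_\infty \;=\; \Bigl(\bigcap_m \im \delta^{n-1}_{m+1}\Bigr)\cap C^n_\infty,
\]
which is exactly what underlies both the surjectivity of your map $\varphi$ and the paper's unproved injectivity claim for the $f_{m_1m_2}$.

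However, you do not actually prove this identity; you label it ``the key input'' and proceed as though it were already in hand. This is a genuine gap. Concretely: suppose $\Phi \in C^n_\infty$ and for each $m$ there exists $\Psi_m \in C^{n-1}_{m+1}$ with $\delta \Psi_m = \Phi$. The various $\Psi_m$ differ from one another by closed $(n-1)$-cochains at the corresponding levels, and nothing in the composability axioms provides a mechanism for correcting them into a single $\Psi \in \bigcap_m C^{n-1}_{m+1} = C^{n-1}_\infty$. Absent such an argument, neither the injectivity of the transition maps (which you invoke when building $\psi$) nor the isomorphism with the inverse limit has been established. In fairness, the paper does not supply this argument either; it simply declares the $f_{m_1m_2}$ injective and moves on.
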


\section{First and second cohomologies and first order deformations}

In this section, we summarize the results in \cite{H-1st-2nd-Coh}, which through minor modifications remain true in the context of grading-restricted vertex superalgebras. More explicitly, the proofs in \cite{H-1st-2nd-Coh} and \cite{H-1st-2nd-Coh-Add} involve commuting at most two elements (say $v$ and $w$), so when dealing with $\Z_2$-homogeneous elements, terms are modified by $(-1)^{|v||w|}$.

\subsection{First cohomology and derivations} Let $V$ be a grading-restricted vertex algebra and $W$ a $V$-supermodule. 

\begin{defn}
A $(\Z/2\times \Z_2)$-grading-preserving linear map $f:V\to W$ is called a {\it derivation} if, for $u,v\in V$, both $\Z_2$-homogeneous, 
\begin{align*}
    f(Y(u,x)v)&=Y^W_{WV}(f(u),x)v+Y_W(u,x)f(v)\\
    &=(-1)^{|u||v|}e^{x\Db_W}Y_W(v,-x)f(u)+Y_W(u,x)f(v).
\end{align*}
We denote the space of all such derivations by $\Der{(V,W)}$. We have the following result for the first cohomologies of $V$ with coefficients in $W$:
\end{defn}
\begin{thm}
    $H^1_m(V,W)$ is linearly isomorphic to the space of derivations from $V$ to $W$ for any $m\in\Z_+$. In particular, $H^1_m(V,W)$ for $m\in\N$ are isomorphic, and can be denoted by the same notation $H^1(V,W)\cong_\C\Der{(V,W)}$. \label{first-coh}
\end{thm}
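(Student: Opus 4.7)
\emph{Proof plan.} The strategy is to construct mutually inverse linear maps between $Z^1_m(V,W)$ and $\Der(V,W)$, and then observe that $B^1_m(V,W)=0$. The first step is to identify a $1$-cochain $\Phi\in C^1_m(V,W)$ with a grading-preserving linear map $f_\Phi:V\to W$ via $f_\Phi(v)=\Phi(v;0)$. This makes sense because for $n=1$ no $z_i=z_j$ poles are allowed, so $z_1\mapsto\Phi(v;z_1)$ is a $\overline{W}$-valued polynomial. The $\db$-conjugation property specialized at $z_1=0$ gives $z^{\db_W}f_\Phi(v)=f_\Phi(z^{\db_V}v)$, so $f_\Phi$ is $\Z/2$-grading-preserving and (since $\Phi$ is parity-preserving) $\Z_2$-grading-preserving. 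The $\Db$-derivative property together with the formal Taylor theorem then yields $\Phi(v;z_1)=e^{z_1\Db_W}f_\Phi(v)$, so $\Phi$ is completely determined by $f_\Phi$.

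Next I would compute $\delta^1_m\Phi$ explicitly. Setting $\zeta_1=z_2$ in the composition (valid by independence of the auxiliary variable) and using the conjugation formula $Y_W(v,x)e^{z\Db_W}=e^{z\Db_W}Y_W(v,x-z)$ coming from the $\Db_W$-commutator, together with $\Phi(v;z_1)=e^{z_1\Db_W}f(v)$ for $f=f_\Phi$, the three summands of $\delta^1_m\Phi$ evaluated at $v_1\otimes v_2$ become
\begin{align*}
(E^{(1)}_W\circ_2\Phi)(v_1\otimes v_2;z_1,z_2) &= e^{z_2\Db_W}Y_W(v_1,z_1-z_2)f(v_2),\\
(\Phi\circ_1 E^{(2)}_V)(v_1\otimes v_2;z_1,z_2) &= e^{z_2\Db_W}f(Y(v_1,z_1-z_2)v_2),\\
(E^{W,(1)}_{WV}\circ_0\Phi)(v_1\otimes v_2;z_1,z_2) &= (-1)^{|v_1||v_2|}e^{z_1\Db_W}Y_W(v_2,z_2-z_1)f(v_1).
\end{align*}
After setting $x=z_1-z_2$ and factoring $e^{z_2\Db_W}$, the equation $\delta^1_m\Phi=0$ becomes equivalent to
$$f(Y(v_1,x)v_2)=Y_W(v_1,x)f(v_2)+(-1)^{|v_1||v_2|}e^{x\Db_W}Y_W(v_2,-x)f(v_1),$$
which is precisely the derivation identity for $f$. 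Hence $\Phi\in Z^1_m(V,W)$ if and only if $f_\Phi\in\Der(V,W)$.

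For the reverse direction, given $f\in\Der(V,W)$ one defines $\Phi_f(v;z_1)=e^{z_1\Db_W}f(v)$. Specializing the derivation identity at $v_1=v_2=\vac$ gives $f(\vac)=0$, from which $f\Db_V=\Db_W f$ follows; together with $[\db_W,\Db_W]=\Db_W$ this supplies the $\Db$-derivative and $\db$-conjugation properties of $\Phi_f$, while parity preservation is built into the definition of a derivation. The cocycle condition for $\Phi_f$ follows by reversing the computation above. Next, $B^1_m(V,W)=0$ follows from the simple check $H^0_m(V,W)=W$ cited just before the theorem, which forces $\delta^0_m=0$. Since $\Phi\mapsto f_\Phi$ is manifestly linear, one obtains the linear isomorphism $H^1_m(V,W)=Z^1_m(V,W)\cong\Der(V,W)$ for every $m\in\Z_+$; as the right side is independent of $m$, all $H^1_m(V,W)$ are canonically identified, justifying the notation $H^1(V,W)$.

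The main technical hurdle is verifying that $\Phi_f$ is composable with $m$ vertex operators for every $m$. One must unfold the derivation identity iteratively to rewrite $f(Y(v_{l_0+1},x_1)\cdots Y(v_{l_0+l_1},x_{l_1})\vac)$ as a finite sum of terms in which exactly one $v_i$ is replaced by $f(v_i)$ and the remaining vertex operators act through $Y$ and $Y_W$. Summand by summand, convergence of the composition series then reduces to the absolute convergence of products of vertex operators provided by the duality axiom, with the signs from supercommutativity being tracked at each step.
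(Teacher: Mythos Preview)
Your proposal is correct and follows essentially the same approach as the paper: both identify $\Phi\leftrightarrow f_\Phi$ via $f_\Phi(v)=\Phi(v;0)$ and $\Phi_f(v;z)=e^{z\Db_W}f(v)$, show the cocycle condition is equivalent to the derivation identity, verify $B^1_m=0$, and handle composability by iteratively unfolding the derivation property to reduce to products of vertex operators. The only cosmetic difference is that you factor out $e^{z_2\Db_W}$ and set $x=z_1-z_2$, whereas the paper specializes $z_2=0$ directly; these are equivalent manipulations.
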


\begin{proof}
First note that a derivation $f:V\to W$ satisfies $f(\vac)=0$. The proof follows identically as in \cite{H-1st-2nd-Coh}. We set up the bijection between derivations and 1-cocycles. Given a function $\Phi\in Z^1_m(V,W)$, we obtain a map $(\Phi(\cdot))(0):V\to W$. On the other hand, let $f\in\Der{(V,W)}$. Then we obtain a map $\Phi_f:V\to \widetilde{W}_{z_1}$ given by
\begin{align*}
    (\Phi_f(v))(z)&=f(Y(v,z)\vac)=Y^W_{WV}(f(v),z)\vac + Y_W(v,z)f(\vac)\\
    &=Y^W_{WV}(f(v),z)\vac = e^{zD_W}f(v),
\end{align*}
where the last equality follows from $|\vac|=0$. 

First we show $(\Phi(\cdot))(0)\in\Der{(V,W)}$. Note, for $v\in V_{(n)}$ and $z\in\C^\times$, by the $\db$-conjugation property,
\begin{align*}
    z^{\db_W}(\Phi(v))(0)&=(\Phi(z^{\db_V}v))(z0)\\
    &=z^n(\Phi(v))(0),
\end{align*}
so $(\Phi(v))(0)\in W_{(n)}$, meaning $(\Phi(\cdot))(0)$ preserves the $\Z/2$-grading. Together with the assumption that $\Phi$ is parity-preserving, $(\Phi(\cdot))(0)$ preserves $(\Z/2 \times \Z_2)$-grading. 

Now let $v_1,v_2\in V$ and $w'\in W'$. Since $\delta^1_m\Phi=0$, 
\begin{align*}
    0 = R(\lrgl{w',&Y_W(v_1,z_1)(\Phi(v_2))(z_2)}) - R(\lrgl{w',(\Phi(Y(v_1,z_1-z_2)v_2))(z_2)})\\
    &+(-1)^{|v_1||v_2|}R(\lrgl{w',Y_W(v_2,z_2)(\Phi(v_1))(z_1)}).
\end{align*}
By the $\Db$-derivative property, we have 
\begin{align*}
    R(\lrgl{w',Y_W(v_2,z_2)(\Phi(v_1))(z_1)})=R(\lrgl{w',e^{z_1\Db_W}Y_W(v_2,-z_1+z_2)(\Phi(v_1))(0)}).
\end{align*}
Therefore, by setting $z_2=0$ we have
\begin{align*}
    0 = R(\lrgl{w',&Y_W(v_1,z_1)(\Phi(v_2))(0)}) - R(\lrgl{w',(\Phi(Y(v_1,z_1)v_2))(0)})\\
    &+(-1)^{|v_1||v_2|}R(\lrgl{w',e^{z_1\Db_W}Y_W(v_2,-z_1)(\Phi(v_1))(0)}).
\end{align*}
Because $w'$ is arbitrary, with the skew-symmetry, 
\begin{align*}
    (\Phi(Y(v_1,z_1)v_2))(0)=Y_W(v_1,z_1)(\Phi(v_2))(0)+Y^W_{WV}((\Phi(v_1))(0),z_1)v_2,
\end{align*}
so $(\Phi(\cdot))(0)$ is a derivation from $V$ to $W$.

Now we show that $\Phi_f$ is a 1-cocycle. We first show that the map $v\mapsto Y^W_{WV}(f(v),z_1)\vac = e^{zD_W}f(v)$ is composable with any number of vertex operators, i.e., $\Phi_f\in C^1_m(V,W)$ for all $m\in\N$. Clearly, for every $m\in \N$, $v_1, ..., v_m \in V$, the series
$$Y_W(v_1, z_1)\cdots Y_W(v_{m-1},z_{m-1})e^{z_m \Db_W}f(v_m)$$
and 
$$e^{\zeta \Db_W}f(Y(v_1, z_1-\zeta)\cdots Y(v_{m-1}, z_{m-1}-\zeta)Y(v_m,z_m-\zeta)\one)$$
converges, since the product of vertex operator converges, and that $e^{z \Db_W}$ has no impact on the convergence (see \cite{Q-Mod}). To show the convergence of the series 
$$Y_W(v_1, z_1) \cdots Y_W(v_k, z_k) e^{\zeta \Db_W} f(Y(v_{k+1}, z_{k+1}-\zeta)\cdots Y(v_{m}, z_m-\zeta)\one), $$
we need to use the fact that $f$ is a derivation, to split it as 
\begin{align} 
& Y_W(v_1, z_1)\cdots Y_W(v_k, z_k) e^{\zeta \Db_W} Y_W(v_{k+1}, z_{k+1}-\zeta) f(Y(v_{k+1}, z_{k+2}-\zeta)\cdots Y(v_m, z_m-\zeta)\one) \label{Der-conv-1}\\
& + Y_W(v_1, z_1)\cdots Y_W(v_k, z_k) e^{\zeta \Db_W} Y_{WV}^W(f(v_{k+1}), z_{k+1}-\zeta) Y(v_{k+1}, z_{k+2}-\zeta)\cdots Y(v_m, z_m-\zeta)\one) \label{Der-conv-2}
\end{align}
The convergence of (\ref{Der-conv-1}) follows from an inductive argument. The convergence of (\ref{Der-conv-2}) follows from the convergence of products of vertex operators. 

Now we calculate $\delta_m^1 \Phi_f$. For $v_1,v_2\in V$ and $w'\in W'$,
\begin{align}
    ((\delta^1_m\Phi_f)(&v_1\otimes v_2))(z_1,z_2) \label{first-coh-term0}\\
    =R(\lrgl{&w',Y_W(v_1,z_1)Y^W_{WV}(f(v_2),z_2)\vac}) \label{first-coh-term1} \\
    &- R(w',Y^W_{WV}(f(Y(v_1,z_1-z_2)v_2),z_2)\vac) \label{first-coh-term2} \\
    &+ (-1)^{|v_1||v_2|}R(\lrgl{w',Y_W(v_2,z_2)Y^W_{WV}(f(v_1),z_1)\vac}) \nonumber
\end{align}
With a similar calculation as in \cite{H-1st-2nd-Coh}, (\ref{first-coh-term2}) is equal to 
\begin{align}
    &-R(\lrgl{w',Y^W_{WV}(f(v_1),z_1)Y_W(v_2,z_2)\vac}) \label{first-coh-term3}\\
    &-R(\lrgl{w',Y_W(v_1,z_1)Y^W_{WV}(f(v_2),z_2)\vac}) \label{first-coh-term4}
\end{align}
We see that terms (\ref{first-coh-term1}) and (\ref{first-coh-term4}) cancel. What remains in (\ref{first-coh-term0}) is
\begin{align*}
    - &R(\lrgl{w',Y^W_{WV}(f(v_1),z_1)Y(v_2,z_2)\vac}) \\
    &+ (-1)^{|v_1||v_2|}R(\lrgl{w',Y_W(v_2,z_2)Y^W_{WV}(f(v_1),z_1)\vac})
\end{align*}
By duality, these two terms equal each other, so the sum is zero. Thus $\Phi_f$ is a 1-cocycle. Note also that there exists no nonzero 1-coboundaries, since for every $w\in W = C_{m+1}^0(V, W)$, 
$(\delta^0_{m+1} w)(v;z) = 0$. Therefore, $\Phi_f\in H^1_m(V,W)$. The maps between $\Der{(V,W)}$ and $H^1_m(V,W)$ are clearly inverse to each other. The conclusion thus follows. 
\end{proof}

\subsection{Square-zero extensions}
Recall from \cite{FHL} the definition of an ideal of a vertex algebra. The same definition applies for a vertex superalgebra.
\begin{defn}
    Let $V$ be a grading-restricted vertex superalgebra. A {\it square-zero ideal of $V$} is an ideal $W\sseq V$ such that for any $u,v\in W$, $Y(u,x)v=0$.
\end{defn}
\begin{defn}
    Let $V$ be a grading-restricted vertex superalgebra and $W$ be a $V$-supermodule whose $\db_W$-grading is given by $\Z/2$. A {\it square-zero extension $(\Lambda,f,g)$ of $V$ by $W$} is a grading-restricted vertex superalgebra $\Lambda$ together with a surjective homomorphism $f:\Lambda\to V$ of grading-restricted vertex superalgebras such that $\ker{f}$ is a square-zero ideal of $\Lambda$ and an injective homomorphism of $V$-supermodules $g:W\to\Lambda$ such that $g(W)=\ker{f}$. We say two square-zero extensions $(\Lambda_1,f_1,g_1)$ and $(\Lambda_2,f_2,g_2)$ are {\it equivalent} if there exists an isomorphism of grading-restricted vertex superalgebras $h:\Lambda_1\to\Lambda_2$ such that the following diagram commutes:

\begin{equation} \label{sq-z-diagram}
\begin{tikzcd}
	0 & W & {\Lambda_1} & V & 0 \\
	0 & W & {\Lambda_2} & V & 0
	\arrow[from=1-1, to=1-2]
	\arrow["{g_1}"', from=1-2, to=1-3]
	\arrow[equals, from=1-2, to=2-2]
	\arrow["{f_1}"', from=1-3, to=1-4]
	\arrow["h"', from=1-3, to=2-3]
	\arrow[from=1-4, to=1-5]
	\arrow[equals, from=1-4, to=2-4]
	\arrow[from=2-1, to=2-2]
	\arrow["{g_2}"', from=2-2, to=2-3]
	\arrow["{f_2}"', from=2-3, to=2-4]
	\arrow[from=2-4, to=2-5] 
\end{tikzcd}
\end{equation}
\end{defn}

\begin{thm}
    Let $V$ be a grading-restricted vertex superalgebra and $W$ a $V$-supermodule. Then the set of equivalence classes of square-zero extensions of $V$ by $W$ corresponds bijectively to $H^2_{\frac{1}{2}}(V,W)$. \label{second-coh}
\end{thm}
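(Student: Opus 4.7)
The plan follows Huang's strategy in \cite{H-1st-2nd-Coh}, generalized to the super setting. I will construct mutually inverse maps between equivalence classes of square-zero extensions and cohomology classes in $H^2_{\frac{1}{2}}(V,W)$, tracking the signs introduced by the $\Z_2$-grading. For the forward direction (extension $\to$ cocycle), given $(\Lambda, f, g)$, I would fix a $(\frac{\Z}{2}\times \Z_2)$-grading-preserving linear section $s: V \to \Lambda$ of $f$ with $s(\vac) = \vac$. Every $\lambda \in \Lambda$ decomposes uniquely as $s(v) + g(w)$ because $\ker f = g(W)$. For $u, v \in V$ that are $\Z_2$-homogeneous, the $\overline{\Lambda}$-valued rational function $E(Y_\Lambda(s(u), z_1)Y_\Lambda(s(v), z_2)\vac)$ decomposes as $s(E(Y(u, z_1)Y(v, z_2)\vac)) + g(\Phi_\Lambda(u \otimes v; z_1, z_2))$, which defines a map $\Phi_\Lambda \in \Hom_\C(V^{\otimes 2}, \widetilde{W}_{z_1, z_2})$. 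The $\Db$-derivative, $\db$-conjugation, and parity-preserving properties follow from the corresponding properties of $Y_\Lambda$ and $s$. For $1/2$-composability, I would invoke the duality axiom of $\Lambda$ applied to $s(u_1), s(u_2), s(u_3)$, which supplies the convergence and rationality needed after projecting onto the $\overline{W}$ component. The super-shuffle condition reduces to the super-skew-symmetry of $Y_\Lambda$ applied to $s(u), s(v)$. The cocycle identity $\delta^2_{\frac{1}{2}}\Phi_\Lambda = 0$ reflects associativity in $\Lambda$: expanding $Y_\Lambda(Y_\Lambda(s(u_1), z_1-z_2)s(u_2), z_2)s(u_3)\vac$ and $Y_\Lambda(s(u_1), z_1)Y_\Lambda(s(u_2), z_2)Y_\Lambda(s(u_3), z_3)\vac$, discarding double products that land in $g(W) \cdot g(W) = 0$, and projecting onto $\overline{W}$ gives precisely the required identity.

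For independence of choices: a second section $s'$ satisfies $s' - s = g \circ h$ for some grading-preserving $h: V \to W$, and a direct computation using the square-zero property gives $\Phi_\Lambda^{s'} - \Phi_\Lambda^s = \delta^1_2 h$, so the class in $H^2_{\frac{1}{2}}(V, W)$ is well-defined. Equivalent extensions yield the same cocycle via the section $k \circ s_1$, where $k$ is the isomorphism in diagram \eqref{sq-z-diagram}. For the reverse direction (cocycle $\to$ extension), given $\Phi \in Z^2_{\frac{1}{2}}(V, W)$, let $\Lambda = V \oplus W$ as a $(\frac{\Z}{2} \times \Z_2)$-graded vector space with $\vac_\Lambda = (\vac, 0)$, $\db_\Lambda = \db_V \oplus \db_W$, and $\Db_\Lambda = \Db_V \oplus \Db_W$. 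Define the vertex operator by
$$Y_\Lambda((u, w_1), x)(v, w_2) = \big(Y(u,x)v, \; Y_W(u,x)w_2 + Y^W_{WV}(w_1, x)v + \Phi^\flat(u, v; x)\big),$$
where $\Phi^\flat(u, v; x) \in \overline{W}[[x, x\iv]]$ is the formal series obtained from $\Phi(u \otimes v; x, 0)$ via the $\Db$-derivative property (analogous to the extraction implicit in the coface maps $\hat{\pd}^{n;i}_m$). By construction, $g: W \to \Lambda$, $w \mapsto (0, w)$, identifies $W$ as a square-zero ideal, and $f: \Lambda \to V$, $(v, w) \mapsto v$, is a surjective homomorphism once duality is established. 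The two constructions are mutually inverse at the level of equivalence classes: starting from $\Phi$ with the canonical section $s(v) = (v, 0)$ recovers $\Phi$; starting from $\Lambda$ with a chosen section $s$, the map $(v, w) \mapsto s(v) + g(w)$ is a vertex-algebra isomorphism onto $\Lambda$ by construction of $\Phi_\Lambda$.

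The main obstacle is verifying the duality axiom for the constructed $Y_\Lambda$. This requires expanding each of the three iterated matrix coefficients $\langle w', Y_\Lambda(\cdot, z_1) Y_\Lambda(\cdot, z_2)(\cdot)\rangle$, its supercommuted version, and its iterated version, then splitting each into contributions from $Y_V$, $Y_W$, $Y^W_{WV}$, and the $\Phi^\flat$-twist, and matching the three expressions via the cocycle identity $\delta^2_{\frac{1}{2}}\Phi = 0$ together with the shuffle (super-skew-symmetry) condition on $\Phi$. The super signs $(-1)^{|u||v|}$ introduced by each transposition of odd vectors must be tracked throughout, and the fact that the shuffle and cocycle conditions in $C^2_{\frac{1}{2}}(V, W)$ have been defined with exactly the right signs (as shown by the case analysis in Theorem \ref{vertexHarrison}) ensures the cancellations go through. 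The convergence of the three iterated series in their respective regions comes from the $1/2$-composability of $\Phi$ together with the duality of $V$ and of $W$ as a $V$-supermodule; the analytic continuations then agree because both sides of each matching are computed from the same rational function in $(z_1, z_2, z_3)$. Once duality is established, the remaining axioms (identity, creation, $\Db$-derivative, $\db$-commutator) follow immediately from the hypotheses on $\Phi$, completing the bijection.
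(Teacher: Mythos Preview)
Your approach is essentially the same as the paper's, recast in the language of sections rather than first normalizing $\Lambda$ to $V\oplus W$; these are equivalent since a grading-preserving section $s$ with $s(\vac)=\vac_\Lambda$ is exactly the data of such a splitting.

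There is, however, a genuine gap in your reverse construction. You assert that once duality is verified, ``the remaining axioms (identity, creation, $\Db$-derivative, $\db$-commutator) follow immediately from the hypotheses on $\Phi$.'' This is not correct for identity and creation: for $Y_\Lambda((\vac,0),x)(v,w)=(v,w)$ you need $\Phi^\flat(\vac,v;x)=0$, and for creation you need $\lim_{x\to 0}\Phi^\flat(v,\vac;x)=0$. Neither of these is forced by the $\Db$-derivative, $\db$-conjugation, parity, shuffle, $1/2$-composability, or cocycle conditions on a general $\Phi\in Z^2_{1/2}(V,W)$. The paper handles this by first replacing $\Phi$ within its cohomology class by a representative satisfying $(\Phi(v\otimes\vac))(z_1,z_2)=0$ (the shuffle condition then gives the symmetric vanishing); the existence of such a representative is nontrivial and is supplied by Huang's addendum \cite{H-1st-2nd-Coh-Add}. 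You should insert this normalization step before extracting $\Phi^\flat$, and note that it is precisely what makes the two constructions inverse to each other at the level of \emph{classes} rather than cocycles.
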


\begin{proof}
Per the discussion in \cite{H-1st-2nd-Coh}, any square-zero extension of $V$ by $W$ is equivalent to one of the form 
\[\begin{tikzcd}
	0 & W & {V\oplus W} & V & 0
	\arrow[from=1-1, to=1-2]
	\arrow["{i_2}", from=1-2, to=1-3]
	\arrow["{p_1}", from=1-3, to=1-4]
	\arrow[from=1-4, to=1-5]
\end{tikzcd}\]
for embedding $i_2: w\mapsto (0, w)$ and projection $p_1: (v, w)\mapsto v$. For such a representative of the equivalence classes of square-zero extensions, we denote the vertex operator by $Y_{V\oplus W}$. Then for $u,v\in V$, there exists an element $\Psi(u,x)v\in W(\!(x)\!)$ such that for $v_1,v_2\in V$ and $w_1,w_2\in W$,
\begin{align}
    Y_{V\oplus W}&((v_1,w_1),x)(v_2,w_2) \nonumber\\
    &=(Y(v_1,x)v_2,Y_W(v_1,x)w_2+Y^W_{WV}(w_1,x)v_2+\Psi(v_1,x)v_2) \label{sq-zero-product}
\end{align}
The vacuum of $V\oplus W$ is $(\vac,0)$, and a quick check shows $\Psi(v,x)\vac=0$ for $v\in V$. 

We proceed to show that $\Psi$ leads to a 2-cocycle. By the $\Db$-derivative and $\db$-commutator properties of $V\oplus W$, we obtain (shown explicitly in \cite{H-1st-2nd-Coh})
\begin{align*}
    \frac{d}{dx}\Psi(v,x)=\Psi(\Db_V &v,x)=\Db_W\Psi(v,x)-\Psi(v,x)\Db_V\\
    \db_W\Psi(v,x)-\Psi(v,x)\db_V&=\Psi(\db_Vv,x)-x\frac{d}{dx}\Psi(v,x)\\
    z^{\db_W}\Psi(v,x)&=\Psi(z^{\db_V}v,zx)z^{\db_V}.
\end{align*}
Identify $(V\oplus W)'$ with $V'\oplus W'$. For $v_1,v_2\in V$ and $w'\in W'$, 
\begin{align*}
    \lrgl{(0,&w'),Y_{V\oplus W}((v_1,0),z_1)Y_{V\oplus W}((v_2,0),z_2)(\vac,0)} \\
    &=\lrgl{w',\Psi(v_1,z_1)Y(v_2,z_2)\vac + Y_W(v_1,z_1)\Psi(v_2,z_2)\vac}\\
    &=\lrgl{w',\Psi(v_1,z_1)Y(v_2,z_2)\vac}, \\
    (-1)^{|(v_1,0)||(v_2,0)|}\lrgl{(0,&w'),Y_{V\oplus W}((v_2,0),z_2)Y_{V\oplus W}((v_1,0),z_1)(\vac,0)} \\
    &=(-1)^{|v_1||v_2|}\lrgl{w',\Psi(v_2,z_2)Y(v_1,z_1)\vac + Y_W(v_2,z_2)\Psi(v_1,z_1)\vac}\\
    &=(-1)^{|v_1||v_2|}\lrgl{w',\Psi(v_2,z_2)Y(v_1,z_1)\vac}, \\
    \lrgl{(0,&w'),Y_{V\oplus W}(Y_{V\oplus W}((v_1,0),z_1-z_2)(v_2,0),z_2)(\vac,0)} \\
    &=\lrgl{w',Y^W_{WV}(\Psi(v_1,z_1-z_2)v_2,z_2)\vac + \Psi(Y(v_1,z_1-z_2)v_2,z_2))\vac}\\
    &=\lrgl{w',Y^W_{WV}(\Psi(v_1,z_1-z_2)v_2,z_2)\vac}
\end{align*}
are absolutely convergent in the regions $|z_1|>|z_2|>0$, $|z_2|>|z_1|>0$, $|z_2|>|z_1-z_2|>0$, respectively, to a common rational function in $z_1$ and $z_2$ with the only possible poles at $z_1=0, z_2=0$ and $z_1=z_2$. In terms of the $E$-notation, we have 
\begin{align*}
    & E(\Psi(v_1,z_2)Y(v_2,z_2)\vac)\\
    = \ & (-1)^{|v_1||v_2|}E(\Psi(v_2,z_2)Y(v_1,z_1)\vac)\\
    = \ & E(Y^W_{WV}(\Psi(v_1,z_1-z_2)v_2,z_2)\vac)
\end{align*}
in $\widetilde{W}_{z_1,z_2}$. For $v_1,v_2\in V$ and $(z_1,z_2)\in F_2\C$, we define the linear map
$$\Psi:V\otimes V\to \widetilde{W}_{z_1,z_2}$$
by
$$(\Phi(v_1\otimes v_2))(z_1,z_2)=E(\Psi(v_1,z_2)Y(v_2,z_2)\vac)$$
Straightforward arguments in \cite{H-1st-2nd-Coh} show that $\Phi$ has both the $\db$-commutator and $\Db$-derivative properties, weakened $\frac{1}{2}$-composability condition, meaning $\Phi\in\widehat{C}^2_{\frac{1}{2}}(V,W)$. The argument in \cite{H-1st-2nd-Coh} also shows that $\delta^2_{\frac{1}{2}}(\Phi)=0,$ as it only relies on the associativity of the vertex operators. To see that $\Phi\in C_{\frac 1 2}^2(V, W)$, we note that for $v_1,v_2\in V$ and $(z_1,z_2)\in F_2\C$ we have
\begin{align*}
    (\Phi(v_1\otimes v_2))(z_1,z_2)&=E(\Psi(v_1,z_1)Y(v_2,z_2))\\
    &=(-1)^{|v_1||v_2|}E(\Psi(v_2,z_2)Y(v_1,z_1))\\
    &=(-1)^{|v_1||v_2|}(\Phi(v_2\otimes v_1))(z_2,z_1)\\
    &=(-1)^{(\sigma)^o}(\sigma(\Phi(v_2\otimes v_1)))(z_1,z_2),
\end{align*}
where $\sigma = (12)$ is the only shuffles on 2 elements. Thus 
$$\sum_{\sigma\in J\iv_{n;p}}(-1)^\sigma(-1)^{\sigma^o}\sigma(\Phi(v_{1}\otimes v_{2}))=0.$$
Hence $\Phi$ is a 2-cocycle in $C_{\frac 1 2}^2(V, W)$. 

Conversely, let $\Phi$ be a representative of an element of $H^2_{\frac{1}{2}}(V,W)$ satisfying $(\Phi(v_1\otimes\vac))(z_1,z_2)=0.$ The existence of such a representative is given in \cite{H-1st-2nd-Coh-Add} (because $\vac$ is even, the proof is verbatim). Then for any $v_1,v_2\in V$, $w'\in W',$ $\lrgl{w',(\Phi(v_1\otimes v_2))(z,0)}$ is a rational function of $z$ with the only possible pole at $z=0$, whose order is bounded above by a number independent of $w'$. This defines an element $\Psi(v_1,x)v_2\in W(\!(x)\!)$ by
$$\lrgl{w',\Psi(v_1,x)v_2}|_{x=z}=\lrgl{w',(\Phi(v_1\otimes v_2))(z,0)}$$
for $z\in\C^\times$. We define $Y_{V\oplus W}(v_1,x)v_2$ using (\ref{sq-zero-product}). Reversing the above proof, we see that $Y_{V\oplus W}$ and the vacuum $(\vac,0)$ equips $V\oplus W$ with a grading-restricted vertex superalgebra structure and with the projection $p_1:V\oplus W\to W$ and embedding $i_2:V\to V\oplus W$ is a square-zero extension of $V$ by $W$.

Now we show that two square-zero extensions are equivalent if an only if the corresponding 2-cocycles differ by a coboundary. First, let $\Phi_1,\Phi_2\in\ker{\delta^2_{\frac{1}{2}}}$ be two 2-cocycles obtained from square-zero extensions $(V\oplus W,Y^{(k)}_{V\oplus W},p_1,i_2)$, where $k=1,2$. Assume that $\Phi_1=\Phi_2+\delta_1(\Gamma),$ where $\Gamma\in C^1(V,W)$. Necessarily, $\Gamma$ preserves the $\db$-grading and the parity. Then we have
\begin{align*}
    R(\lrgl{w',\Psi_1(v_1,z_1)Y(v_2,z_2)\vac}) &= \lrgl{w',(\Phi_1(v_1\otimes v_2))(z_1.z_2)}\\
    &=\lrgl{w',(\Phi_2(v_1\otimes v_2))(z_1.z_2)}+\lrgl{w',(\delta_1(\Gamma))(z_1,z_2)}\\
    &=R(\lrgl{w',\Psi_2(v_1,z_1)Y(v_2,z_2)\vac}) + R(\lrgl{w',Y_W(v_1,z_1)(\Gamma(v_2))(z_2)})\\
    &\quad-R(\lrgl{w',(\Gamma(Y(v_1,z_1-z_2)v_2))(z_2)})\\
    &\quad+(-1)^{|v_1||v_2|}R(\lrgl{w',Y_W(v_2,z_2)(\Gamma(v_1))(z_1)})\\
    &=R(\lrgl{w',\Psi_2(v_1,z_1)Y(v_1,z_2)\vac}) + R(\lrgl{w',Y_W(v_1,z_1)(\Gamma(v_2))(z_2)})\\
    &\quad-R(\lrgl{w',(\Gamma(Y(v_1,z_1-z_2)v_2))(z_2)})\\
    &\quad+(-1)^{|v_1||v_2|}R(\lrgl{w',e^{(z_1-z_2)\Db_W}Y_W(v_2,-z_1)(\Gamma(v_1))(z_2)})
\end{align*}
Setting $z_2=0$, we have 
\begin{align*}
    \lrgl{w',\Psi_1(v_1,z_1)v_2}=&\lrgl{w',\Psi_2(v_1,z_1)v_2} + \lrgl{w',Y_W(v_1,z_1)(\Gamma(v_2))(0)}\\
    &-\lrgl{w',(\Gamma(Y(v_1,z_1)v_2))(0)}+(-1)^{|v_1||v_2|}\lrgl{w',e^{z_1\Db_W}Y_W(v_2,-z_1)(\Gamma(v_1))(0)}
\end{align*}
Then, using skew-symmetry, we have
\begin{align*}
    \Psi_1(v_1,x)v_2=&\Psi_2(v_1,x,z_1)v_2+Y_W(v_1,x)(\Gamma(v_2))(0)\\
    &-(\Gamma(Y(v_1,x)v_2))(0)+Y^W_{WV}((\Gamma(v_1))(0),x)v_2
\end{align*}
Using this and (\ref{sq-zero-product}), for $v_1,v_2\in V$ and $w_1,w_2\in W$, we have
\begin{align}
    Y^{(1)}_{V\oplus W}(&(v_1,w_1),x)(v_2,w_2)\nonumber\\
    =(&Y(v_1,x)v_2,Y_W(v_1,x)w_2+Y^W_{WV}(w_1,x)v_2+\Psi_1(v_1,x)v_2)\nonumber\\
    =(&Y(v_1,x)v_2,Y_W(v_1,x)w_2+Y^W_{WV}(w_1,x)v_2+\Psi_2(v_1,x,z_1)v_2\nonumber\\
    &+Y_W(v_1,x)(\Gamma(v_2))(0)-(\Gamma(Y(v_1,x)v_2))(0)+Y^W_{WV}((\Gamma(v_1))(0),x)v_2)\nonumber\\
    =Y&^{(2)}_{V\oplus W}((v_1,w_1+(\Gamma(v_1))(0)),x)(v_2,w_2+(\Gamma(v_2))(0))\nonumber\\
    &-(Y(v_1,x)v_2,(\Gamma(Y(v_1,x)v_2))(0)). \label{sq-zero-coboundary}
\end{align}

We now define a linear map $h\in\End{(V\oplus W)}$ by, for $v\in V$ and $w\in W$
$$h(v,w)=(v,w+(\Gamma(v))(0)).$$
Then $h$ is a linear isomorphism, and (\ref{sq-zero-coboundary}) can be rewritten as
$$h(Y^{(1)}_{V\oplus W}((v_1,w_1),x)(v_2,w_2))=Y^{(2)}_{V\oplus W}(h(v_1,w_1)),x)h(v_2,w_2).$$
Thus $h$ is an isomorphism of grading-restricted vertex superalgebras from $(V\oplus W,Y^{(1)}_{V\oplus W},(\vac,0))$ to $(V\oplus W,Y^{(2)}_{V\oplus W},(\vac,0))$, such that (\ref{sq-z-diagram}) is commutative (with $\Lambda_1=V\oplus W=\Lambda_2$). Then the two square-zero extensions of $V$ by $W$ are equivalent.

Conversely, let $(V\oplus W,Y^{(k)}_{V\oplus W},p_1,i_2)$, where $k=1,2$, be two equivalent square-zero extensions of $V$ by $W$. So there exists an isomorphism $h:V\oplus W\to V\oplus W$ of grading-restricted vertex superalgeras such that (\ref{sq-z-diagram}) commutes. It is proved in \cite{H-1st-2nd-Coh}, Lemma 3.1 that there exists a linear map $g\in\End{(V)}$ such that 
    $$h(v,w)=(v,w+g(v))$$
    for $v\in V$ and $w\in W$. Let $(\Gamma(v))(z_1)=e^{z_1\Db_W}g(v)\in\overline{W}$. Then $\Gamma:V\to\widetilde{W}_{z_1}$ is an element of $C^1_2(V,W)$. By definition, we have $(\Gamma(v))(0)=g(v)$, and $h(v,w)=(v,w+(\Gamma(v))(0))$. Let $\Phi_1$ and $\Phi_2$ be the 2-cocyles obtained from $(V\oplus W,Y^{(1)}_{V\oplus W},p_1,i_2)$ and $(V\oplus W,Y^{(2)}_{V\oplus W},p_1,i_2)$, respectively. Then following the above proof in reverse, we see that $\Phi_1=\Phi_2+\Gamma$, meaning that $\Phi_1$ and $\Phi_2$ are in fact representatives of the same cohomology class.
\end{proof}

\subsection{First-order deformations}
In the case of associative algebras, square-zero extensions are useful in deformation theory, as their equivalence classes are in bijection with the equivalence classes of first order deformations. It turns out the same results follow analogously for grading-restricted vertex (super)algebras.
\begin{defn}
A first-order deformation of a grading-restricted vertex superalgebra $(V, Y, \vac)$ is defined by an operator 
$$Y_1: V\otimes V \to V((x)), $$
such that the $(\Z/2\times \Z_2)$-graded vector space $V^t = \C[t]/(t^2)\otimes_\C V$, with the vertex operator
$$Y^t(u, x) v = Y(u, x)v + t Y_1(u, x)v$$
and the vacuum element $\vac$, i.e., $(V^t, Y^t, \vac)$, forms a grading-restricted vertex superalgebra over the base ring $\C[t]/(t^2)$. Two first-order deformations $(V^t, Y^{t,(1)})$ and $(V^t, Y^{t,(2)})$ are \textit{equivalent}, if there exists a $\C[t]/(t^2)$-linear isomorphism $f^t: V \oplus tV = V^t \to V_t =V\oplus tV $ of vertex superalgebras, whose restriction on $V$ is of the form
$$f^t|_V = 1_V + t f_1, $$
where $f_1:V\to V$ is a $\C$-linear grading-preserving map. In other words, for $u,v\in V$, 
$$f^t(u+tv) = u + t(v+f_1(u)). $$
To emphasize, equivalent first-order deformations are isomorphic, but the converse does not necessarily hold. 
\end{defn}

\begin{thm}
    The set of equivalence classes of first order deformations of a grading-restricted vertex superalgebra is in bijection with the set of equivalence classes of square-zero extensions of $V$ by $V$. \label{sq-z-ext}
\end{thm}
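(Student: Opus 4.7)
The plan is to construct an explicit bijection between equivalence classes of first-order deformations of $V$ and equivalence classes of square-zero extensions of $V$ by $V$ (viewed as a supermodule over itself), exploiting the observation that the underlying $(\Z/2\times\Z_2)$-graded $\C$-vector space $V^t = V \oplus tV$ (with $t^2 = 0$) is naturally isomorphic to $V \oplus V$ via $u + ta \mapsto (u,a)$, so that the ideal $tV$ plays the role of a square-zero ideal.

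First I will take a first-order deformation $(V^t, Y^t, \vac)$ with $Y^t = Y + tY_1$ and expand, using $t^2 = 0$,
\begin{align*}
    Y^t(u + ta, x)(v + tb) = Y(u,x)v + t\bigl(Y(u,x)b + Y(a,x)v + Y_1(u,x)v\bigr).
\end{align*}
Under the identification above, and using the skew-symmetric relation $Y(a,x)v = (-1)^{|a||v|} e^{x\Db}Y(v,-x)a = Y^V_{VV}(a,x)v$ which realises $V$ as a bimodule over itself, this expression translates precisely into a vertex operator $Y_{V\oplus V}$ of the form (\ref{sq-zero-product}) with $\Psi(u,x)v = Y_1(u,x)v$. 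I will then verify that, by expanding in powers of $t$, the axioms for $(V^t, Y^t, \vac)$ as a grading-restricted vertex superalgebra over $\C[t]/(t^2)$ split into the axioms for $V$ (automatic) and the axioms making $(V\oplus V, Y_{V\oplus V}, (\vac,0))$ a grading-restricted vertex superalgebra, producing via the canonical projection $p_1$ and inclusion $i_2$ a square-zero extension of $V$ by $V$. For the reverse direction, by the proof of Theorem \ref{second-coh} any square-zero extension of $V$ by $V$ is equivalent to one of the standard form (\ref{sq-zero-product}); extracting $\Psi$ produces $Y_1$ and hence $Y^t = Y + tY_1$, and running the same expansion in reverse shows that $(V^t, Y^t, \vac)$ is a first-order deformation.

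Next I will match the equivalence relations. A $\C[t]/(t^2)$-linear isomorphism $f^t = 1_V + t f_1$ between first-order deformations corresponds, under the identification $V^t \cong V \oplus V$, to the map $h(u,a) = (u, a + f_1(u))$ which appears in the last part of the proof of Theorem \ref{second-coh}. The intertwining condition $f^t \circ Y^{t,(1)} = Y^{t,(2)} \circ (f^t \otimes f^t)$, expanded modulo $t^2$, becomes exactly the condition that $h$ is an isomorphism of square-zero extensions making diagram (\ref{sq-z-diagram}) commute. Consequently equivalent first-order deformations correspond to equivalent square-zero extensions, and the bijection descends to a bijection of equivalence classes.

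The hard part will be the careful bookkeeping to confirm that \emph{every} axiom matches cleanly under the expansion in $t$, including the grading-restriction conditions, the $\db$- and $\Db$-commutator formulas, the duality axiom, and especially the $\Z_2$-grading with its sign conventions. In particular, the factor $(-1)^{|u||v|}$ in the duality and supercommutativity relations for $Y^t$ at the $t^1$-level must align with the sign structure built into the square-zero extension construction; this relies on the parity on $tV$ (inherited from $V$, with $t$ even) being compatible with the parity on $V$ viewed as a supermodule over itself. Once these technical compatibilities are verified, the bijection of equivalence classes follows immediately.
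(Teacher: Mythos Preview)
Your proposal is correct and follows essentially the same approach as the paper: both extract $\Psi = Y_1$ from the first-order deformation, define $Y_{V\oplus V}$ by the formula (\ref{sq-zero-product}), verify the vertex superalgebra axioms by expanding modulo $t^2$, reverse the construction for the converse, and match the equivalence $f^t = 1_V + t f_1$ with the isomorphism $h(u,a) = (u, a + f_1(u))$ of square-zero extensions. Your framing via the explicit identification $V^t \cong V\oplus V$ is slightly more conceptual than the paper's presentation, but the content is the same; like the paper, most of the detailed axiom-checking can be deferred to \cite{H-1st-2nd-Coh}, with only the $\Z_2$-sign bookkeeping in the duality axiom genuinely new here.
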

    
Theorems \ref{second-coh} and \ref{sq-z-ext} then imply the following result.
\begin{thm}
    Let $V$ be a grading-restricted vertex superalgebra. Then the set of equivalence classes of first order deformations of $V$ correspond bijectively to $H^2_{\frac{1}{2}}(V,V).$ \label{deformation-result}
\end{thm}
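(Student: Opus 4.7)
The plan is to obtain the desired bijection by composing the two bijections established in Theorems \ref{second-coh} and \ref{sq-z-ext}, specialized to the case $W = V$. Theorem \ref{sq-z-ext} provides a bijection $\Delta_{1}$ from equivalence classes of first-order deformations of $V$ to equivalence classes of square-zero extensions of $V$ by $V$, while Theorem \ref{second-coh} (with $W=V$) provides a bijection $\Delta_{2}$ from equivalence classes of square-zero extensions of $V$ by $V$ to $H^{2}_{\frac{1}{2}}(V,V)$. The required bijection is then $\Delta_{2}\circ \Delta_{1}$.

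To make the correspondence explicit, I would trace a first-order deformation $Y^{t}=Y+tY_{1}$ through both constructions. By Theorem \ref{sq-z-ext}, the deformation corresponds to the square-zero extension on $V\oplus V$ whose vertex operator has the shape (\ref{sq-zero-product}), with the correction term $\Psi(v_{1},x)v_{2}$ built from $Y_{1}$. By the proof of Theorem \ref{second-coh}, the associated $2$-cocycle in $C^{2}_{\frac{1}{2}}(V,V)$ is
\[
\Phi(v_{1}\otimes v_{2};z_{1},z_{2}) = E\bigl(\Psi(v_{1},z_{1})Y(v_{2},z_{2})\vac\bigr),
\]
and its class in $H^{2}_{\frac{1}{2}}(V,V)$ is the image of the deformation. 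The inverse map proceeds by selecting a representative $\Phi$ with $\Phi(v\otimes\vac;z_{1},z_{2})=0$ (whose existence in the super setting follows from the verbatim adaptation of \cite{H-1st-2nd-Coh-Add}, since $\vac$ is even), using Theorem \ref{second-coh} to reconstruct a square-zero extension of $V$ by $V$, and then invoking Theorem \ref{sq-z-ext} to produce the corresponding first-order deformation.

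The remaining work is purely formal: one checks that $\Delta_{2}\circ \Delta_{1}$ and its inverse are mutually inverse at the level of equivalence classes, which reduces immediately to the fact that each of $\Delta_{1}$ and $\Delta_{2}$ is already a bijection of equivalence classes. I do not expect any substantive obstacle at this stage, since all the analytic content (composability, the $1/2$-composable condition, skew-symmetry, and the matching of cocycle and coboundary identities with the vertex superalgebra axioms) has already been absorbed into the two component theorems. The only bookkeeping care needed is to ensure that the equivalence relation on first-order deformations (via $\C[t]/(t^{2})$-linear isomorphisms of the form $1_{V}+tf_{1}$) and the equivalence relation on square-zero extensions (via isomorphisms making diagram (\ref{sq-z-diagram}) commute) are matched consistently through $\Delta_{1}$, which is precisely what Theorem \ref{sq-z-ext} asserts.
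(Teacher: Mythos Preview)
Your proposal is correct and takes essentially the same approach as the paper: the paper derives Theorem \ref{deformation-result} simply by composing the bijections of Theorems \ref{second-coh} and \ref{sq-z-ext}, and you do the same, with the added (and welcome) explicitness of tracing a deformation $Y^{t}=Y+tY_{1}$ through both constructions to the cocycle $\Phi(v_{1}\otimes v_{2};z_{1},z_{2})=E\bigl(\Psi(v_{1},z_{1})Y(v_{2},z_{2})\vac\bigr)$.
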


Theorem \ref{deformation-result} is a significant result of this paper; it allows for a method of computing first order deformations of grading-restricted Vertex superalgebras by computing the cohomology group $H_{\frac 1 2}^2(V, V)$. Now we move to proving these theorems, following alongside \cite{H-1st-2nd-Coh} and \cite{H-1st-2nd-Coh-Add}.

\begin{proof}

Let $Y_t:V\otimes V\to V(\!(x)\!)$, $t\in\C$ be a first order deformation of $V$. By definition, there exists
\begin{align*}
    \Psi:&V\otimes V\to V(\!(x)\!)\\
    &v_1\otimes v_2 \to \Psi(v_1,x)v_2
\end{align*}
such that for $v_1,v_2\in V$,
$$Y_t(v_1,x)v_2=Y(v_1,x)v_2+t\Psi(v_1,x)v_2$$
and $(V,Y_t,\vac)$ is a family of grading restricted vertex algebras up to the first order in $t$. By the identity property and creation properties for $(V,Y_t,\vac)$ up to the first order in $t$, we can obtain
\begin{equation}
    \Psi(\vac,x)v=0 \label{fod-first-vac}
\end{equation}
and
\begin{equation}
    \lim_{x\to 0}\Psi(v,x)\vac=0 \label{fod-second-vac}
\end{equation}
for $v\in V$, respectively.

The duality property up to the first order in $t$ can be written explicitly as follows: For $v_1,v_2,v_3\in V$ and $v'\in V'$,
\begin{align*}
    \lrgl{v',(Y(v_1,z_1)\Psi(v_2,z_2)&+\Psi(v_1,z_1)Y(v_2,z_2))v_3}\\
    (-1)^{|v_1||v_2|}\lrgl{v',(Y(v_2,z_2)\Psi(v_1,z_1)&+\Psi(v_2,z_2)Y(v_1,z_1))v_3}\\
    \lrgl{v',(Y(\Psi(v_1,z_1-z_2)v_2,z_2)&+\Psi(Y(v_1,z_1-z_2)v_2,z_2))v_3}
\end{align*}
are absolutely convergent in the regions $|z_1|>|z_2|>0$, $|z_2|>|z_1|>0$, and $|z_2|>|z_1-z_2|>0$, respectively, to a common rational function in $z_1$ and $z_2$ with the only possible poles at $z_1=0, z_2=0$ and $z_1=z_2.$

Now, using this we make a square-zero extension of $V$ by $V$. Let
\begin{align*}
    Y_{V\oplus V}:&(V\oplus V)\otimes(V\oplus V)\to (V\oplus V)[\![x,x\iv]\!]\\
    &(u_1,v_1)\otimes(u_2,v_2)\mapsto Y_{V\oplus V}((u_1,v_1),x)(u_2,v_2) 
\end{align*}
be given by
\begin{align}
    Y_{V\oplus V}&((u_1,v_1),x)(u_2,v_2)\nonumber\\
    &=(Y(u_1,x)u_2,Y(u_1,x)v_2+Y(v_1,x)u_2+\Psi(u_1,x)u_2) \label{fod-prod}
\end{align}
for $u_1,u_2,v_1,v_2\in V$. In particular, by (\ref{fod-prod}), (\ref{fod-first-vac}), and (\ref{fod-second-vac}) we have
$$Y_{V\oplus V}((\vac,0),x)(u,v)=(u,v)$$
and
$$\lim_{x\to 0}Y_{V\oplus V}((u,v),x)(\vac,0)=(u,v)$$
for $u,v\in V$, respectively. That is, $(V\oplus V,Y_{V\oplus V},(\vac,0))$ has the identity and creation properties. By the duality property of $V$, and duality up to the first order in $t$, both of which involve absolute convergence, we have that, for $u_1,u_2,u_3,v_1,v_2,v_3\in V$, $u',v'\in V'$
\begin{align}
    \lrgl{(u',v'),Y_{V\oplus V}((u_1,v_1),z_1)&Y_{V\oplus V}((u_2,v_2),z_2)(u_3,v_3)}\label{sq-z-fod-duality}\\
    \lrgl{(u',v'),Y_{V\oplus V}(Y_{V\oplus V}((u_1,v_1)&,z_1-z_2)(u_2,v_2),z_2)(u_3,v_3)}\nonumber
\end{align}
are absolutely convergent to a common rational function in the regions $|z_1|>|z_2|>0$ and $|z_2|>|z_1-z_2|>0$ with the only possible poles at $z_1=0, z_2=0$, $z_1=z_2$ (\cite{H-1st-2nd-Coh} show this explicitly).
Now, with the added assumption that $(u_1,v_1),(u_2,v_2)\in V\oplus V$ are both $\Z_2$ homogeneous elements, we have that
\begin{align*}
    (-1&)^{|(u_1,v_1)||(u_2,v_2)|}\lrgl{(u',v'),Y_{V\oplus V}((u_1,v_1),z_1)Y_{V\oplus V}((u_2,v_2),z_2)(u_3,v_3)}\\
    =(&-1)^{|u_1||u_2|}\lrgl{u',Y(u_2,z_2)Y(u_1,z_1)u_3}+(-1)^{|u_1||u_2|}\lrgl{v',Y(u_2,z_2)Y(u_1,z_1)v_3}\\
    &+(-1)^{|v_1||u_2|}\lrgl{v',Y(u_2,z_2)Y(v_1,z_1)u_3}+(-1)^{|u_1||u_2|}\lrgl{v',Y(u_2,z_2)\Psi(u_1,z_1)u_3}\\
    &+(-1)^{|u_1||v_2|}\lrgl{v',Y(v_2,z_2)Y(u_1,z_1)u_3}+(-1)^{|u_1||u_2|}\lrgl{v',\Psi(u_2,z_2)Y(u_1,z_1)u_3},
\end{align*}
which by the duality of $V$ and the duality up to the first order in $t$ is convergent to the same rational function as (\ref{sq-z-fod-duality}) in the region $|z_2|>|z_1|>0$ with the only possible poles at $z_1=0, z_2=0$, $z_1=z_2$. So $(V\oplus V, Y_{V\oplus V},(\vac,0))$ has the duality property.

Note that the $\Db$-derivative property is a consequence for the other axioms for vertex superalgebras (see \cite{LL}). The $\db$-commutator formula follows from the corresponding properties of $Y$ and $\Psi$. Thus $(V\oplus V,Y_{V\oplus V},(\vac,0))$ is a grading-restricted vertex superalgera. That $(V\oplus V,Y_{V\oplus V},(\vac,0))$ is a square-zero extension of $V$ by $V$ follows identically as in \cite{H-1st-2nd-Coh}.

Conversely, let $(V\oplus V,Y_{V\oplus V},p_1,i_2)$ be a square-zero extention of $V$ by $V$. Then there exists
\begin{align*}
    \Psi:&V\oplus V\to V(\!(x)\!)\\
    &v_1\otimes v_2\mapsto \Psi(v_1,x)v_2
\end{align*}
such that for $u_1,u_2\in V$
$$Y_{V\oplus V}((u_1,0),x)(u_2,0)=(Y(u_1,x)u_2,\Psi(u_1,x)u_2).$$
Then for $t\in \C$, define
$$Y_t(v_1,x)v_2=Y(v_2,x)v_2+t\Psi(v_1,x)v_2$$
for $v_1,v_2\in V$. Working backwards through the above proof, using the identity property, creation property, and duality of the grading-restricted superalgebra $(V\oplus V,Y_{V\oplus V},(\vac,0))$ shows that $Y_t$ is a first order deformation of $(V,Y,\vac).$

That two first order deformations of $V$ are equivalent if and only if the corresponding square-zero extensions of $V$ by $V$ are equivalent follows identically as in \cite{H-1st-2nd-Coh}; one direction follows from the difference of two first-order deformations and the other from Lemma 3.1 in \cite{H-1st-2nd-Coh}.

\end{proof}

\noindent {\small \sc Department of Mathematics, University of Denver, Denver, CO  80210, USA}

\noindent {\em E-mail address}: paul.t.johnson@du.edu

\noindent {\small \sc School of Mathematics (Zhuhai), Sun Yat-Sen University, Zhuhai, Guangdong, China}

\noindent {\em E-mail address}: qifei@mail.sysu.edu.cn | fei.qi.math.phys@gmail.com

\end{document}